\documentclass[11pt]{amsart}

\usepackage[T1]{fontenc}
\usepackage{lmodern}
\usepackage{microtype}
\usepackage{amsmath,amssymb,amsfonts,mathtools}
\usepackage{enumitem}
\usepackage{xcolor}
\usepackage[colorlinks=true,linkcolor=blue!45!black,citecolor=blue!45!black,urlcolor=blue!55!black]{hyperref}
\usepackage[nameinlink,capitalise,noabbrev]{cleveref}

\hypersetup{
  pdftitle={Mathematical Koans and Cartan Convexity: Gamma-Convex Hulls and Butterfly Realizations},
  pdfauthor={J. E. Pascoe},
  pdfsubject={Mathematical koans, local-to-global extension, Gamma-convexity, plurisubharmonicity, and butterfly realizations},
  pdfkeywords={mathematical koan, free function, matrix convexity, Gamma-convexity, plurisubharmonicity, convex hull, butterfly realization}
}

\allowdisplaybreaks
\setlist[itemize]{leftmargin=2em}
\setlist[enumerate]{leftmargin=2.2em}

\newtheorem{theorem}{Theorem}[section]
\newtheorem{proposition}[theorem]{Proposition}
\newtheorem{lemma}[theorem]{Lemma}
\newtheorem{corollary}[theorem]{Corollary}
\newtheorem{question}[theorem]{Question}
\theoremstyle{definition}
\newtheorem{definition}[theorem]{Definition}
\newtheorem{example}[theorem]{Example}
\theoremstyle{remark}
\newtheorem{remark}[theorem]{Remark}

\newcommand{\Hh}{\mathcal H}
\newcommand{\Kk}{\mathcal K}
\newcommand{\Ss}{\mathbb S}
\newcommand{\ncco}{\operatorname{co}_{\mathrm{nc}}}
\newcommand{\sot}{\mathrm{SOT}}
\newcommand{\ran}{\operatorname{ran}}
\newcommand{\Gco}{\Gamma\text{-}\operatorname{co}_{\mathrm{nc}}}

\title[Mathematical koans and Cartan convexity]{Mathematical Koans and Cartan Convexity:\\
$\Gamma$-Convex Hulls and Butterfly Realizations}
\author{J. E. Pascoe}
\date{August 2026}

\subjclass[2020]{46L07, 47A63, 47L07, 32U05, 52A30}
\keywords{mathematical koan, free function, matrix convexity, noncommutative convexity, $\Gamma$-convexity, plurisubharmonic function, convex hull, butterfly realization, strong operator topology}

\begin{document}

\begin{abstract}
In homage to Hofstadter, we call a compact, citable specification of
definitions, theorem, proof mechanism, and examples a \emph{mathematical
koan} when a full paper can be reconstructed and verified from it, including
in reader-specific forms generated by AI. Every AI-solvable problem is itself
a koan, although reconstruction need not be cheap; the present article is an
expansion of one.

We introduce Cartan convexity for self-adjoint free functions: locally, such a
function agrees with a locally bounded matrix-convex free function. If the
variable set has cardinality $\tau$ and
$\lambda=\max\{\tau,\aleph_0\}$, every bounded real free set has a universal
direct sum on a Hilbert space of dimension at most $2^\lambda$; every point of
the set is a reducing summand of an amplification of this sum. Applying the
noncommutative Kraus--butterfly theorem at that universal point yields an
extension to an open matrix-convex neighborhood of the noncommutative convex
hull. For normal affine pencils, the extension domain also contains the
bounded strong closure of the hull.

We prove an analogous theorem for a graph embedding $\Gamma$. A local convex
lift through $\Gamma$ extends to a neighborhood of
$\Gamma^{-1}(\operatorname{co}_{\mathrm{nc}}\Gamma(K))$ and admits a
butterfly realization in the $\Gamma$-coordinates. We distinguish this lift
condition from intrinsic $\Gamma$-convexity. For
$\Gamma(x,y)=(x,y,y^2)$, the intrinsically $\Gamma$-affine polynomial
$xy+yx$ has no convex lift germ at the origin, whereas a single quadratic
coordinate suffices to lift every uniformly real analytic germ.

For the graph consisting of all analytic and coanalytic monomials, intrinsic
$\Gamma$-convexity is equivalent to free plurisubharmonicity in the polynomial
and uniformly analytic settings, and the scalar hull is the classical
polynomial hull. For $\Gamma_J(x,y)=(x,y,xy+yx)$, admissible compressions are
characterized by vanishing Jordan covariance. At level one the associated
separating curves are rectangular hyperbolas, and the sharp scalar
Carath\'eodory number is four.
\end{abstract}

\maketitle

\section{Introduction}

The central observation of this paper is that a real free set allows a
family of local hypotheses to be represented by a single local hypothesis,
but the universal point must be constructed at a definite Hilbert-space
level. Let $\mathcal I$ be the set of variables and put
\[
 \lambda=\max\{|\mathcal I|,\aleph_0\},\qquad \mu=2^\lambda.
\]
Every tuple decomposes into reducing summands, each acting on a Hilbert space
of dimension at most $\lambda$. On the standard spaces $\ell_2(\kappa)$,
$\kappa\leq\lambda$, there are at most $\mu$ such tuples altogether. For a
bounded real free set $K$, sum all of its points at these standard levels:
\[
 K_{\leq\lambda}=\bigcup_{\kappa\leq\lambda}K(\ell_2(\kappa)),
 \qquad
 \mathbf X=\bigoplus_{X\in K_{\leq\lambda}}X.
\]
Then $\mathbf X\in K$ acts on a Hilbert space of dimension at most $\mu$, and
every point of $K$ is a reducing summand of an amplification of $\mathbf X$.
Consequently, a convex lift near $\mathbf X$ simultaneously determines
compatible data at every point of $K$.

Matrix convexity is particularly suited to this construction. Its domains are
closed under direct sums and isometric compressions, its locally bounded
functions are real analytic, and its realization theory provides formulas of
the form
\begin{equation}\label{eq:intro-butterfly}
 f(X)=\ell(X)+\Lambda(X)^*P(X)^{-1}\Lambda(X),
\end{equation}
where $\ell$, $\Lambda$, and the self-adjoint pencil $P$ are affine and
$P(X)\succ0$. The epigraph of this function is described by the linear matrix
inequality
\begin{equation}\label{eq:intro-epi}
 \begin{pmatrix}
  Y-\ell(X)&\Lambda(X)^*\\
  \Lambda(X)&P(X)
 \end{pmatrix}\succeq0.
\end{equation}

The affine structure also propagates a uniform positive lower bound through
all noncommutative convex combinations. Since every point of $\ncco(K)$ is an
isometric compression of an amplification of $\mathbf X$, the realization at
$\mathbf X$ naturally extends to a neighborhood of the entire hull.

We call a self-adjoint free function $f$ on $K$ \emph{Cartan convex} when,
near every point of $K$, it agrees with a locally bounded matrix-convex free
function. The terminology refers to the local-to-envelope viewpoint of
Cartan--Thullen theory \cite{CartanThullen}; no equivalence with the classical
theory of domains of holomorphy is asserted. Precise definitions appear in
\cref{sec:all-level}.

\begin{theorem}[Cartan extension theorem; informal form]\label{thm:intro-cartan}
Let $K$ be a bounded real free set in a set $\mathcal I$ of variables.
Every Cartan convex free function $f$ on $K$ extends to a locally bounded
matrix-convex free function on an open matrix-convex neighborhood of the
noncommutative convex hull of $K$. The extension admits a
butterfly realization of the form \eqref{eq:intro-butterfly}. If its affine
maps are bounded-SOT continuous, the neighborhood also contains the bounded
strong closure of the hull.
\end{theorem}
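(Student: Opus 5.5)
The plan is to reduce the whole statement to a single local hypothesis at one universal point, following the construction sketched before \cref{thm:intro-cartan}. Let $\lambda=\max\{|\mathcal I|,\aleph_0\}$ and $\mathbf X=\bigoplus_{X\in K_{\leq\lambda}}X$. Three facts about $\mathbf X$ are needed.
\begin{enumerate}
\item $\mathbf X\in K$. This uses that $K$ is a real free set closed under direct sums, and boundedness of $K$, so that the sum defines a bounded tuple.
\item $\mathbf X$ acts on a space of dimension at most $2^\lambda$. There are at most $2^\lambda$ bounded tuples indexed by $\mathcal I$ on each $\ell_2(\kappa)$ with $\kappa\le\lambda$.
\item Every $Z\in K$ is unitarily equivalent to a reducing summand of an amplification $\mathbf X^{(m)}$. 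Decompose $Z$ into reducing summands on separable-type subspaces of dimension at most $\lambda$. Each summand can be taken as invariant under the $C^*$-algebra generated by the tuple, generated by $\lambda$ vectors. Then each summand lies in $K_{\le\lambda}$, using closure of free sets under restriction to reducing subspaces and unitary conjugation.
\end{enumerate}

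By Cartan convexity, there is a neighborhood $U$ of $\mathbf X$ and a locally bounded matrix-convex free function $g$ on a matrix-convex open set containing $U$ with $g=f$ near $\mathbf X$. Apply the noncommutative Kraus--butterfly theorem to $g$ at $\mathbf X$. This produces affine $\ell,\Lambda$ and a self-adjoint affine pencil $P$ with $P(\mathbf X)\succeq \varepsilon I$ such that $g=\ell+\Lambda^*P^{-1}\Lambda$ near $\mathbf X$. Define $F(Y)=\ell(Y)+\Lambda(Y)^*P(Y)^{-1}\Lambda(Y)$ on
\[
\Omega=\{Y: \|Y\|<R,\ P(Y)\succ \tfrac{\varepsilon}{2}I\},
\]
with $R$ slightly larger than $\sup_{K}\|\cdot\|$. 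Then:
\begin{itemize}
\item $\Omega$ is open and matrix convex, since $P$ is affine and the defining inequalities are preserved by isometric compressions $V^*(\cdot)V$ and direct sums.
\item $F$ is matrix convex and locally bounded there, because its epigraph is the LMI \eqref{eq:intro-epi}.
\end{itemize}

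The positivity bound propagates to the hull. For $Z=V^*\mathbf X^{(m)}V\in\ncco(K)$, affinity gives $P(Z)=V^*P(\mathbf X)^{(m)}V\succeq\varepsilon I$. Here I will use the fact, assumed from earlier, that every point of the hull is such a compression once reducing summands are absorbed. So $\ncco(K)\subset\Omega$, and uniformity of $\varepsilon$ even gives a fixed norm-ball about it.

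The main obstacle is that $F$ extends $f$, not merely $g$ near $\mathbf X$. For $Z\in K$, write $Z$ as a reducing summand of $\mathbf X^{(m)}$. Both $f$ and $F$ are free functions, so they respect direct sums and intertwinings. Hence $F(Z)$ is the corresponding compression of $F(\mathbf X^{(m)})=f(\mathbf X^{(m)})=f(\mathbf X)^{(m)}$, which equals $f(Z)$. The local agreement must be transported to a neighborhood of $Z$, not just to the point $Z$. For $W$ near $Z$, the sum $W\oplus(\text{complement in }\mathbf X^{(m)})$ is near $\mathbf X^{(m)}$, and restriction to the reducing summand gives $f(W)=F(W)$ wherever $f$ is defined. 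I expect care here about whether amplifications of a neighborhood of $\mathbf X$ remain in the uniform neighborhood. This is fine if neighborhoods are norm-uniform across levels, which I would verify from the definitions in \cref{sec:all-level}.

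Finally, suppose $\ell,\Lambda,P$ are bounded-SOT continuous. Let $Z_\alpha\to Z$ strongly with $Z_\alpha\in\ncco(K)$ norm bounded. Then $P(Z_\alpha)\to P(Z)$ strongly, and the inequality $\langle P(Z_\alpha)h,h\rangle\ge\varepsilon\|h\|^2$ passes to the limit. So $P(Z)\succeq\varepsilon I$, and $Z\in\Omega$.
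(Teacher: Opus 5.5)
Your proposal follows the paper's proof essentially step for step. It builds the universal sum $\mathbf X$ over $K_{\leq\lambda}$ and applies the Kraus--butterfly realization at $\mathbf X$. It then propagates $P\succeq\varepsilon I$ to every $V^*\mathbf X^{(\nu)}V$ by affineness, uses the Schur-complement epigraph, and passes the lower bound through bounded strong limits. Two variations are harmless: you decompose via cyclic $C^*$-subspaces instead of connected components of the basis graph, and you add a norm cap and an $\varepsilon/2$ margin to $\Omega$ (which makes $F$ uniformly bounded on $\Omega$).

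One step needs rerouting, because as written it can fail under the paper's axioms. In the agreement argument you use $F(\mathbf X^{(m)})=f(\mathbf X^{(m)})=f(\mathbf X)^{(m)}$. This requires $\mathbf X^{(m)}\in K$ and that $f$ respect an $m$-fold direct sum. However, real free sets and nc functions are only assumed compatible with direct sums of at most $\mu=2^\lambda$ summands. A point $Z\in K$ on a space of dimension larger than $\mu$ forces $m>\mu$, since $\mathbf X$ acts on dimension at most $\mu$. For such $Z$, $f(\mathbf X^{(m)})$ need not even be defined.

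The paper avoids this by working componentwise. Each reducing component $Y$ of $Z$ is, up to unitary equivalence, in $K_{\leq\lambda}$, and hence a reducing summand of $\mathbf X$ itself, with no amplification. Then $F(\mathbf X)=g(\mathbf X)=f(\mathbf X)$, together with restriction to reducing subspaces, gives $F(Y)=f(Y)$. Since $F(Z)$ and $f(Z)$ are both block diagonal along this decomposition, they coincide. Only reducing restrictions of $f$ are used.

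Your worry about transporting agreement to a neighborhood of $Z$ is also unnecessary. The function $f$ is defined only on $K$, and extension means pointwise equality there. The Cartan neighborhood enters only through the single identity at $\mathbf X$.
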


The proof uses the universal point $\mathbf X$ and the convex lift at that
point. The noncommutative Kraus--butterfly theorem of Pascoe and Tully-Doyle
\cite{PTD} gives a realization with
$P(\mathbf X)\succeq\varepsilon I$. Affineness propagates this inequality to
every compression of every amplification of $\mathbf X$, hence to the
noncommutative convex hull.
Formula \eqref{eq:intro-butterfly} therefore continues the lift to
$\{P\succ0\}$. Every small component of every $X\in K$ occurs directly as a
reducing summand of $\mathbf X$; equality on those components recovers $f(X)$.

The second theorem passes the same mechanism through a graph map
$\Gamma=(\gamma_1,\dots,\gamma_r)$ of self-adjoint free functions containing
the original coordinates. A nonlinear $\Gamma$ specifies the information
preserved by an admissible compression. The foundational theory of
$\Gamma$-convexity \cite{JKMMPGamma} shows that the resulting hull is
the inverse image of an ordinary matrix-convex hull upstairs. We call $f$
\emph{lift-Cartan $\Gamma$-convex} when the induced function on $\Gamma(K)$
is Cartan convex; equivalently, locally
\[
 f(X)=g_X(\Gamma(X))
\]
for an ordinary matrix-convex free function $g_X$ in the lifted coordinates.

\begin{theorem}[$\Gamma$-Cartan extension theorem; informal form]
\label{thm:intro-gamma}
Let $K$ and $\Gamma$ be as above. Every lift-Cartan $\Gamma$-convex function
extends to an open $\Gamma$-convex neighborhood of $\Gco(K)$ and has a
$\Gamma$-butterfly realization
\begin{equation}\label{eq:intro-gamma-butterfly}
 f(X)=\ell(\Gamma(X))+
 \Lambda(\Gamma(X))^*P(\Gamma(X))^{-1}\Lambda(\Gamma(X)).
\end{equation}
Its epigraph is cut out by the corresponding block $\Gamma$-pencil.
\end{theorem}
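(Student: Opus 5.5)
The plan is to reduce \cref{thm:intro-gamma} to \cref{thm:intro-cartan} applied upstairs, in the lifted coordinates. Put $L=\Gamma(K)$. This is a bounded real free set in $r$ variables: $\Gamma$ is a tuple of self-adjoint free functions, so it respects direct sums and unitary similarity, and it is bounded on the bounded set $K$. By definition, $f$ is lift-Cartan $\Gamma$-convex exactly when the induced function $h$ on $L$, given by $h(\Gamma(X))=f(X)$, is Cartan convex. Before using $h$ we must check that it is well defined. This holds because $\Gamma$ is a graph embedding: it contains the original coordinates, so $\Gamma(X)$ determines $X$.

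Next we apply \cref{thm:intro-cartan} to $h$ on $L$. This yields a locally bounded matrix-convex extension $g$ on an open matrix-convex neighborhood $U$ of $\ncco(L)$, together with a realization
\[
 g(Y)=\ell(Y)+\Lambda(Y)^*P(Y)^{-1}\Lambda(Y),\qquad P(Y)\succ0 \text{ on } U,
\]
and epigraph given by the block linear matrix inequality \eqref{eq:intro-epi} in $Y$. We then define the extension downstairs by $F(X)=g(\Gamma(X))$ on $V=\Gamma^{-1}(U)$. Substituting $Y=\Gamma(X)$ gives \eqref{eq:intro-gamma-butterfly} at once, and the epigraph of $F$ is the block $\Gamma$-pencil obtained by the same substitution.

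It remains to verify three properties of $V$ and $F$:
\begin{enumerate}
\item $V$ is open, because $\Gamma$ is continuous as a free function, being locally bounded and hence analytic.
\item $V\supseteq\Gco(K)$. By the result of \cite{JKMMPGamma}, $\Gco(K)=\Gamma^{-1}(\ncco\Gamma(K))$, and this lies inside $\Gamma^{-1}(U)$.
\item $V$ is $\Gamma$-convex. An admissible $\Gamma$-compression of points of $V$ maps under $\Gamma$ to an ordinary matrix-convex combination of points of $U$, which stays in $U$. Equivalently, $V$ is the $\Gamma$-preimage of a matrix-convex set, which is the characterization of $\Gamma$-convex sets from \cite{JKMMPGamma}.
\end{enumerate}
Finally, $F$ agrees with $f$ on $K$ because $g$ agrees with $h$ on $L$.

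The main obstacle is the compatibility between the universal point and the lift. The Cartan hypothesis is local at each point of $L$, while \cref{thm:intro-cartan} needs the lift at the universal sum $\mathbf Y=\bigoplus Y$ over $L_{\le\lambda}$. We must check that $\Gamma(\mathbf X)=\bigoplus\Gamma(X)$ is that universal point, or at least has every point of $L$ as a reducing summand of an amplification. This follows from $\Gamma$ respecting direct sums. The cardinal $\lambda$ is unchanged when we pass to $r$ variables together with $\mathcal I$, since $\Gamma$ includes the coordinates.

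A second point needs care. The local germs $g_X$ must be matrix convex near $\Gamma(X)$ in a full neighborhood upstairs, not only along the image of $\Gamma$. I take this to be part of the definition of lift-Cartan. If only equality on $\Gamma$-images is assumed, I would first try to reconcile distinct germs at a single point through the uniqueness step in the proof of \cref{thm:intro-cartan}: equality on reducing summands of $\mathbf Y$ forces the values, and only the values on $L$ are needed to recover $f$.
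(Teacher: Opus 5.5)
Your proposal is correct and is essentially the paper's proof of \cref{thm:gamma-extension}. It applies \cref{thm:cartan-extension} to $\widehat f$ on $\Gamma(K)$, pulls back along $\Gamma$ to $\Omega_\Gamma=\Gamma^{-1}(\{P\succ0\})$ with $F=G\circ\Gamma$, obtains $\Gco(K)\subseteq\Omega_\Gamma$ from the graph-hull identity \eqref{eq:graph-hull}, and obtains $\Gamma$-convexity from $\Gamma(V^*XV)=V^*\Gamma(X)V$. Your ``main obstacle'' is not really one, since \cref{lem:universal-point} builds the universal point directly for $\Gamma(K)$ at the lifted cardinal $\max\{|J|,\aleph_0\}$ (which in general may exceed $\lambda$, contrary to your remark); also, the paper assumes boundedness of $\Gamma(K)$ rather than deriving it, since this is automatic for polynomial $\Gamma$ but not for general locally bounded $\Gamma$.
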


The lift-Cartan condition is stronger than the intrinsic Jensen inequality,
which tests only compressions $V$ satisfying
\[
 V^*\Gamma(X)V=\Gamma(V^*XV).
\]
Already for partial convexity,
$\Gamma_{\mathrm{par}}(x,y)=(x,y,y^2)$, the polynomial $xy+yx$ satisfies the
intrinsic inequality with equality but has no local matrix-convex lift through
the $\Gamma_{\mathrm{par}}$-graph. The obstruction is a nonzero mixed Hessian
along the kernel of the $x$-Hessian; it is also visible in the affine tail of
the root-butterfly realization from \cite{JKMMPPartial}.
Thus preservation of the intrinsic Jensen inequalities does not by itself
produce a convex extension away from the graph.

The free-set hypothesis is essential. It replaces compactness and patching
by a universal direct sum. At finitely many levels such a universal point need
not exist, and a three-point scalar example shows that the theorem can fail. This
operator-level emphasis is
consistent with
\cite{JKMMPGamma,KMS} and with the all-cardinal formulation of
noncommutative convex sets in \cite{DK,HKM}.

The examples illustrate several different forms of $\Gamma$-geometry.
If
\[
 \Gamma_{\square}(X)=
 \left(X_1,\dots,X_g,\sum_{j=1}^gX_j^2\right),
\]
then every $\Gamma_{\square}$-pair is a reducing compression. Intrinsic
$\Gamma_{\square}$-convexity is therefore vacuous: every free function passes
the Jensen test because there are no nonreducing tests left to fail. The
lifting problem can also be solved. A middle-matrix bound for
the nc Hessian supplies a completely positive correction in the normal
coordinate $\sum X_j^2-S$, and a second square controls the mixed normal
directions. Thus one quadratic coordinate suffices to convexify every uniformly
real analytic germ, even though the intrinsic condition is vacuous.

The full holomorphic graph behaves in the opposite way. Write $Z=X+iY$ and
let
$\Gamma_{\mathrm{hol}}(Z)$ consist of the real and imaginary parts of every
analytic word $Z^w$, equivalently of all analytic and coanalytic monomials.
Upper and lower triangular perturbations of $Z\oplus Z$ become
$\Gamma_{\mathrm{hol}}$-pairs, and Jensen's
inequality on their first corners splits the nc complex Hessian into its two
positive orderings. Intrinsic $\Gamma_{\mathrm{hol}}$-convexity therefore
implies free plurisubharmonicity directly.
For noncommutative plurisubharmonic polynomials, the hereditary--antihereditary
sum-of-squares theorem produces an ordinary convex lift in these coordinates.
For uniformly real analytic free functions, the corresponding statement
follows from the convex-composition theorem for free plurisubharmonic
functions. Thus, in these two categories, free plurisubharmonicity is
equivalent both to intrinsic $\Gamma_{\mathrm{hol}}$-convexity and to
lift-Cartan $\Gamma_{\mathrm{hol}}$-convexity. Applying the main theorem gives a
holomorphic-monomial butterfly on a neighborhood of the
$\Gamma_{\mathrm{hol}}$-hull.

The Jordan-product graph has an explicit geometric interpretation. Take
\[
 \Gamma_J(X,Y)=(X,Y,XY+YX).
\]
If $V$ is an isometry, $P=VV^*$, and
\[
 A=(I-P)XV,\qquad B=(I-P)YV,
\]
then $((X,Y),V)$ is a $\Gamma_J$-pair exactly when
\[
 A^*B+B^*A=0.
\]
This condition has many nonreducing solutions. At level one,
$\Gamma_J(x,y)=(x,y,2xy)$ is a hyperbolic paraboloid. Affine half-spaces upstairs
pull back to inequalities
$a+bx+cy+2dxy\geq0$, and the boundaries are rectangular hyperbolas (or their
degenerations). Dually, admissible scalar barycenters are exactly the
probability measures for which $x$ and $y$ have covariance zero. Four corners
generate a square, although two opposite corners generate no interior segment.
The sharp scalar Carath\'eodory number is four.

The paper is organized as follows. The next section describes the
AI-assisted methodology used to develop the paper and proposes the publication
of mathematical koans. We then construct the universal point and prove the
extension theorem. After passing to $\Gamma$-graphs, we distinguish intrinsic
convexity from convex liftability and study quadratic, holomorphic, partial,
and Jordan-product examples. The final section discusses the continuation
domain and records several open problems.

\subsection*{Relation to earlier work}
The analytic input is the royal-road theorem and noncommutative Kraus
realization of Pascoe and Tully-Doyle \cite{PTD}, extending the rational
butterfly realization of Helton, McCullough, and Vinnikov \cite{HMV}. The
geometric background is the theory of
matrix-convex separation and duality---Effros--Winkler \cite{EW},
Webster--Winkler \cite{WW}, and Davidson--Kennedy \cite{DK}---together with
the operator-system boundary theory of \cite{Arveson} and the
$\Gamma$-convexity of \cite{JKMMPGamma}. Partial and $xy$-convex rational
functions, including their root butterflies, appear in
\cite{JKMMPPartial}; recent work develops the corresponding operator systems,
extreme points, hulls, and duality \cite{KMS,Strekelj}.

The plurisubharmonic example draws on the polynomial classification of
Greene, Helton, and Vinnikov and its local form \cite{GHV,Greene}, the
rational convex-composition theorem \cite{DHKMV}, and the uniformly analytic
realization theorem \cite{PascoePSH}. The contribution of the present paper is
the universal-point argument: once a convex lift exists at the universal
point, the denominator in its realization is uniformly positive on the entire
hull. This converts the local realization into an extension to a neighborhood
of that hull.

\section{AI-assisted methodology and the publication of koans}
\label{sec:ai-methodology}

Artificial intelligence changes how much initial specification is required to
produce a mathematical paper. In a conventional workflow, we normally supply
not only the main idea but also its full sequence of definitions,
proofs, examples, transitions, and comparisons with the literature. A language
model can expand a substantially smaller input into several candidate versions
of that surrounding structure. For AI-assisted writing, we need not begin with
a full explanation; a sufficiently exact \emph{mathematical koan}
is enough.

Here a mathematical koan means a compact
specification of the mathematical content from which a paper can be
reconstructed and tested. A useful koan contains at least the new definition
or statement, the decisive mechanism of the proof, the intended range of
validity, and the principal proof obligations. It is therefore more precise
than an abstract or an informal research announcement, but much shorter than a
traditional paper. It need not prescribe the order of exposition, the amount
of background, or the examples through which the result is explained.

The name is an homage to Hofstadter's use of koans in
\emph{G\"odel, Escher, Bach} \cite{HofstadterGEB}: a compact object may open
onto a much larger structure when it is interpreted. Our use is mathematical
and operational. The koan should be exact enough that a competent reader or
model can reconstruct and verify the larger work it encodes.

A koan has no prescribed resolution. We could have encoded this paper by
giving its formal definitions and theorems in one large list, thereby tightly
constraining the reconstruction. At the intermediate resolution actually used
here, we gave:
\par\smallskip
{\small\itshape\noindent
Let $K$ be a bounded real free set in a set $\mathcal I$ of variables, put
$\lambda=\max\{|\mathcal I|,\aleph_0\}$, and let $f=f^*$ be nc.
Cartan convexity means that each germ of $f$ on $K$ has a locally bounded
matrix-convex nc lift.

Every tuple is a direct sum of reducing tuples, each of dimension at most
$\lambda$, and there are at most $2^\lambda$ tuples altogether on the standard
spaces $\ell_2(\kappa)$, $\kappa\leq\lambda$. Sum all points of $K$ at these
levels to obtain a universal point
$\mathbf X\in K$ on a Hilbert space of dimension at most $2^\lambda$. Every
point of $K$ is a reducing summand of an amplification of $\mathbf X$. Apply the local
noncommutative Kraus theorem \cite{Kraus,PTD} to the convex lift at
$\mathbf X$. Its butterfly denominator is positive on every compression of
every amplification of $\mathbf X$; hence the butterfly extends $f$
matrix-convexly to an open neighborhood of $\ncco(K)$. Since all the small
summands occur directly in $\mathbf X$, the formula agrees with $f$ on $K$.

For a graph map $\Gamma$ containing the coordinates, call $f$ Cartan
$\Gamma$-convex when $\widehat f(\Gamma(X))=f(X)$ is Cartan convex on
$\Gamma(K)$. Apply the preceding argument upstairs and use
$\Gco(K)=\Gamma^{-1}(\ncco(\Gamma(K)))$ \cite{JKMMPGamma}. Pullback gives
\[
 f(X)=\ell(\Gamma(X))+\Lambda(\Gamma(X))^*
 P(\Gamma(X))^{-1}\Lambda(\Gamma(X))
\]
on a neighborhood of the $\Gamma$-convex hull.

For $\Gamma_\square(X)=(X,\sum_jX_j^2)$, the Cartan
$\Gamma_\square$-convex functions are exactly the self-adjoint real analytic
nc functions. For $\Gamma$ consisting of all analytic and coanalytic
monomials, they are exactly the free plush functions.
\par\pagebreak[3]\noindent
The nontrivial graph
$\Gamma_J(x,y)=(x,y,xy+yx)$ gives $xy$-convexity, zero Jordan covariance, and
hyperbolic scalar geometry.\par}
\smallskip
At a freer resolution, we could have written only:
\begin{quote}
\small\emph{Apply the Cartan local-to-global approach on a real free set with
$\Gamma$-convexity in place of ordinary convexity, and work at all operator
levels. Then a Cartan $\Gamma$-convex function extends to a neighborhood of
the $\Gamma$-convex hull and admits a butterfly realization of
Pascoe--Tully-Doyle type.}
\end{quote}
These three resolutions trade reliability for freedom. More specification makes a
reconstruction more reliable; less specification gives the reconstructing
intelligence more freedom and therefore more opportunity to discover something
new. The appropriate resolution depends on whether the immediate purpose is
faithful reconstruction or mathematical exploration.

\subsection{The koan as a scholarly object}

Mathematical publication should accordingly begin to move from publishing a
single fixed exposition toward publishing such koans as primary objects. At
least for work that can be reliably reconstructed from a compact core, the
stable scholarly object would be a versioned koan together with an
authoritative proof record. Such a publication would contain:
\begin{enumerate}[label=\textup{(\roman*)}]
\item the definitions and theorem statements that are to remain invariant;
\item the essential proof mechanism and a map of its logical dependencies;
\item the hypotheses, boundary cases, and failed implications needed to
      prevent an overbroad reconstruction; and
\item provenance, version information, and a fixed record of the argument that
      we and the referees have checked.
\end{enumerate}
The koan would be the citable item. Its conventional article-length expansion
would be one rendering of it, not necessarily the only rendering.

Every AI-solvable problem is itself a koan: its statement contains enough
information to reconstruct a solution.  Reconstruction may require substantial
search, computation, and verification, since information content and
reconstruction cost are different quantities.

A reader could then request a bespoke version generated from the same koan.
The generated exposition might use the reader's preferred notation, supply
more operator-system background, emphasize the several-complex-variables
analogy, add elementary examples, or omit material already familiar to that
reader. It could be translated, shortened, expanded into lecture notes, or
organized around a particular application. These versions would differ in
presentation but not in their theorem statements, proof obligations, or
citations. Each generated version should identify the koan and its version,
record the circumstances of generation, and permit every substantive claim to
be traced back to the authoritative core.

Publication of koans would therefore not mean publication of incomplete
mathematics. The proposed compression concerns exposition, not verification.
The invariant object must be sufficiently exact that a generated proof can be
checked against it and sufficiently explicit that genuine ambiguity is
recorded rather than silently resolved by the model. Refereeing would focus on
the correctness and sufficiency of this invariant core. The reader-specific
rendering would address a different question: how this particular reader can
most efficiently understand it.

\subsection{Expansion and correction}

An AI-assisted workflow has two complementary stages. In the expansion stage,
the model develops the koan into candidate definitions, lemmas, proofs,
examples, references, and organizational choices. This is useful because it
makes implicit proof obligations visible and produces several formulations
that can be compared rapidly. In the correction stage, we remove unsupported
implications, supply missing hypotheses, choose the relevant examples, and
check the argument independently. Language models are effective
at producing plausible continuity between ideas; precisely for that reason,
they can also conceal a gap by writing across it. Fluency is evidence that an
exposition has been completed, not that a proof has been verified.

The distinction between a koan and its expansion helps assign responsibility.
The mathematical contribution lies in identifying the invariant mechanism and
determining its correct scope. The model assists with reconstruction,
variation, and stress testing. We remain responsible for deciding
which generated claims belong to the koan, which are consequences requiring
proof, and which must be discarded.

\section{Real free sets and universal points}\label{sec:all-level}

\subsection{Free sets and cardinal bounds}
Let $\mathcal I$ be a set of variables and put
\begin{equation}\label{eq:universal-cardinals}
 \lambda=\max\{|\mathcal I|,\aleph_0\},
 \qquad \mu=2^\lambda.
\end{equation}
For a Hilbert space $\Hh$, let
$\Ss(\Hh)^{\mathcal I}=B(\Hh)_{\mathrm{sa}}^{\mathcal I}$, and let
$\Ss^{\mathcal I}$ denote the class of such tuples over all Hilbert spaces.
In the finite-coordinate portions of the paper,
$\mathcal I=\{1,\ldots,g\}$.

\begin{definition}
A collection $K\subseteq\Ss^{\mathcal I}$ is a \emph{real free set} if it is
invariant under simultaneous unitary conjugation, closed under reducing
subspaces, and closed under uniformly bounded direct sums having at most
$\mu$ summands. It is \emph{bounded} if
\[
 \sup\{\|X_i\|:X\in K,\ i\in\mathcal I\}<\infty.
\]
\end{definition}

For a Hilbert space $\Hh$, write
$K(\Hh)=K\cap\Ss(\Hh)^{\mathcal I}$.

The bound $\mu$ is exactly the amount of direct-sum closure used below. No
closure under longer direct sums is assumed.

\begin{definition}
A point $\mathbf X\in K$ is \emph{universal for $K$} if every $X\in K$ is
unitarily equivalent to a reducing summand of an amplification
$\mathbf X^{(\nu)}$ for some cardinal $\nu$.
\end{definition}

\begin{lemma}[Cardinally bounded universal direct sum]
\label{lem:universal-point}
Every nonempty bounded real free set $K$ in the variables $\mathcal I$ has a
universal point $\mathbf X\in K$ acting on a Hilbert space $\mathcal U$ with
\[
 \dim\mathcal U\leq\mu=2^\lambda.
\]
More precisely, put
\[
 K_{\leq\lambda}=\bigcup_{\kappa\leq\lambda}K(\ell_2(\kappa)).
\]
Then one may take $\mathbf X=\bigoplus_{Y\in K_{\leq\lambda}}Y$; after taking
at most $\mu$ copies, $\mathcal U$ may be taken to be exactly $\ell_2(\mu)$.
\end{lemma}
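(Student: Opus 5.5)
The plan has four steps: show that $\mathbf X$ is a legitimate member of $K$, decompose an arbitrary point of $K$ into small reducing pieces, match each piece to a summand of $\mathbf X$, and then normalize the dimension of $\mathcal U$.

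\textbf{Step 1: counting the summands.} Each $K(\ell_2(\kappa))$ with $\kappa\leq\lambda$ is a subset of $B(\ell_2(\kappa))^{\mathcal I}$. A bounded operator on $\ell_2(\kappa)$ is determined by its matrix entries, so $|B(\ell_2(\kappa))|\leq (2^{\aleph_0})^{\kappa\cdot\kappa}\leq 2^\lambda$. Tuples indexed by $\mathcal I$ then number at most $(2^\lambda)^{|\mathcal I|}=2^{\lambda}$. There are at most $\lambda$ cardinals $\kappa\leq\lambda$, so $|K_{\leq\lambda}|\leq\lambda\cdot 2^\lambda=\mu$.

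\textbf{Step 2: $\mathbf X\in K$.} Since $K$ is bounded, the family $K_{\leq\lambda}$ is uniformly bounded. A direct sum of at most $\mu$ such summands lies in $K$ by the closure axiom of a real free set. It acts on $\mathcal U=\bigoplus_{Y}\ell_2(\kappa_Y)$, whose dimension is at most $\mu\cdot\lambda=\mu$. We need $K_{\leq\lambda}\neq\emptyset$; this follows from Step 3, since $K$ is nonempty and closed under reducing subspaces.

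\textbf{Step 3: universality (the main obstacle).} Let $X\in K(\Hh)$. The key claim is that $\Hh$ decomposes as an orthogonal direct sum of jointly reducing subspaces $\Hh_j$, each of dimension at most $\lambda$.

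1. Consider the unital C$^*$-algebra generated by the $X_i$. It has a countable-over-$\mathcal I$ dense set of $*$-words with rational coefficients, of cardinality $\lambda$.
2. For any vector $h$, the closed cyclic subspace it generates is reducing and has dimension at most $\lambda$.
3. By Zorn's lemma, take a maximal orthogonal family of such cyclic reducing subspaces. The orthogonal complement of their sum is again reducing, so if it were nonzero any vector in it would violate maximality. Hence the family exhausts $\Hh$.

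Each summand $X|_{\Hh_j}$ lies in $K$ by closure under reducing subspaces. Conjugating by a unitary $\Hh_j\to\ell_2(\dim\Hh_j)$ gives a member $Y_j\in K_{\leq\lambda}$ by unitary invariance. So $X\cong\bigoplus_j Y_j$, with $|J|\leq\dim\Hh$ summands. Each $Y_j$ is a summand of $\mathbf X$, hence $X$ is unitarily equivalent to a reducing summand of $\mathbf X^{(\nu)}$ with $\nu=|J|$: place the $j$-th copy of $\mathbf X$ so that it contributes its $Y_j$ summand. The only care needed is the cardinal bookkeeping in items 1 and 2, and confirming that amplification by arbitrary $\nu$ is allowed in the definition. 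It is, since the definition only requires $X$ to be a summand of $\mathbf X^{(\nu)}$ and does not require $\mathbf X^{(\nu)}\in K$.

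\textbf{Step 4: normalizing $\dim\mathcal U$.} Replace $\mathbf X$ by $\mathbf X^{(\mu)}$. This is a direct sum of $\mu$ uniformly bounded copies, so it lies in $K$. It acts on a space of dimension $\mu\cdot\dim\mathcal U=\mu$ (as $\mathcal U\neq0$), which is unitarily $\ell_2(\mu)$. It remains universal, since $\mathbf X$ is a summand of it and amplifications compose.
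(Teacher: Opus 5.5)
Your proof is correct, and it differs from the paper's only in Step 3. Steps 1, 2 and 4 coincide with the paper's: the count $|K_{\leq\lambda}|\leq 2^\lambda$, membership of $\mathbf X$ via the $\mu$-summand closure axiom, and renormalization by taking $\mu$ copies.

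Step 3 decomposes an arbitrary $X\in K(\Hh)$ into reducing pieces of dimension at most $\lambda$. The paper does this combinatorially. It fixes an orthonormal basis $E$ of $\Hh$ and joins $e,f\in E$ whenever $\langle X_ie,f\rangle\neq0$ for some $i$. Each $X_ie$ has countable support, so every vertex has degree at most $|\mathcal I|\aleph_0\leq\lambda$. Hence connected components have cardinality at most $\lambda$, and their closed spans are reducing; self-adjointness of the $X_i$ makes the graph undirected.

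You instead use the classical splitting of a representation into cyclic subrepresentations. The cyclic subspace $\overline{C^*(X)h}$ contains the dense set $\{ph\}$, with $p$ ranging over rational $*$-polynomials in the $X_i$, and this set has cardinality at most $\lambda$. A maximal orthogonal family of such subspaces then exhausts $\Hh$. The one step you left implicit is passing from density character to Hilbert dimension. An orthonormal basis of a space with dense subset $D$ injects into $D$, because the open balls of radius $1/\sqrt2$ about distinct basis vectors are disjoint.

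What each route buys:
\begin{itemize}
\item The paper's argument is elementary: once a basis is fixed, the decomposition is read off from the connected components, with no maximality argument.
\item Your argument locates the bound conceptually, as the density character of the generated $C^*$-algebra. It therefore applies unchanged to any representation of a $C^*$-algebra of density character at most $\lambda$, including non-self-adjoint tuples.
\end{itemize}
You also correctly note that $K_{\leq\lambda}\neq\emptyset$ follows from the decomposition, a point the paper leaves tacit.
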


\begin{proof}
Let $X\in K$ act on $\Hh_X$, and choose an orthonormal basis $E$ of $\Hh_X$.
Make $E$ into a graph by joining $e$ and $f$ whenever
$\langle X_i e,f\rangle\neq0$ for some $i\in\mathcal I$. Since the coordinates
are self-adjoint, this is an undirected graph. Each vector in a Hilbert space
has countable support in an orthonormal basis, so every vertex has degree at most
$|\mathcal I|\aleph_0\leq\lambda$. Every connected component therefore has
cardinality at most $\lambda$. Its closed span reduces every $X_i$, and hence
$X$ is a direct sum of points of $K$, each acting on a Hilbert space of
dimension at most $\lambda$.

For each cardinal $\kappa\leq\lambda$, realize all tuples of dimension $\kappa$
on $\ell_2(\kappa)$. An operator is determined by its matrix entries, so the
number of $\mathcal I$-tuples on this space is at most
\[
 |\mathbb C|^{|\mathcal I|\kappa^2}\leq 2^\lambda.
\]
There are at most $\lambda$ choices of $\kappa\leq\lambda$. Consequently there
are at most $2^\lambda$ tuples altogether at these standard levels. In
particular, $K_{\leq\lambda}$ is a set of cardinality at most $2^\lambda$.
Put
\[
 \mathbf X=\bigoplus_{Y\in K_{\leq\lambda}}Y.
\]
This uniformly bounded direct sum has at most $\mu$ summands, so
$\mathbf X\in K$, and
\[
 \dim\mathcal U
 \leq |K_{\leq\lambda}|\,\lambda
 \leq 2^\lambda.
\]
By unitary invariance, every component in the decomposition of an arbitrary
$X\in K$ is unitarily equivalent to an element of $K_{\leq\lambda}$. Hence
$X$ is a reducing summand of a suitable amplification of $\mathbf X$. Taking
$\mu$ copies of $\mathbf X$, if necessary, is allowed and realizes the
universal point on $\ell_2(\mu)$ without changing this property.
\end{proof}

\begin{remark}[The level in finitely many variables]
If $\mathcal I$ is finite or countable, then $\lambda=\aleph_0$ and
\[
 \mathcal U=\ell_2(2^{\aleph_0})
\]
is always large enough. Arbitrarily large input levels cause no problem:
their summands in the preceding decomposition have separable dimension and are
recovered by amplifying the single universal point on $\mathcal U$.
\end{remark}

\begin{remark}
The boundedness assumption may be replaced throughout by the direct assumption
that $K$ has a universal point. An
unbounded free set need not: the direct sum of an unbounded family of bounded
operators is generally unbounded. Bounded exhaustions recover local versions,
but a single neighborhood of the full unbounded hull again demands the
compatibility argument that the universal-point construction avoids.
\end{remark}

\subsection{Noncommutative convex combinations}
Let $X^{(\alpha)}\in\Ss(\Hh_\alpha)^{\mathcal I}$ and let
$V_\alpha:\Hh\to\Hh_\alpha$ be a uniformly bounded family such that
$\sum_\alpha V_\alpha^*V_\alpha=I_\Hh$ in the strong operator topology.
The associated nc convex combination is
\[
 \sum_\alpha V_\alpha^*X^{(\alpha)}V_\alpha.
\]
It is equivalently an isometric compression of
$\bigoplus_\alpha X^{(\alpha)}$. This observation is the reason the universal
point controls the hull. We denote by $\ncco(K)$ the free set of all such
combinations and by $\overline{\ncco}^{\sot}(K)$ its bounded strong closure.

\begin{lemma}\label{lem:hull-universal-compression}
If $\mathbf X$ is universal for $K$, then every point of $\ncco(K)$ is
unitarily equivalent to $V^*\mathbf X^{(\nu)}V$ for an isometry $V$ and some
cardinal $\nu$.
\end{lemma}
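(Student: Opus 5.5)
We write a point of $\ncco(K)$ as $Z=\sum_\alpha V_\alpha^*X^{(\alpha)}V_\alpha$ with $X^{(\alpha)}\in K(\Hh_\alpha)$ and $\sum_\alpha V_\alpha^*V_\alpha=I_\Hh$ strongly. We then realize $Z$ as an isometric compression of a single direct sum, and embed that sum into an amplification of $\mathbf X$. First we form the column operator $W:\Hh\to\bigoplus_\alpha\Hh_\alpha$, $Wh=(V_\alpha h)_\alpha$. The strong convergence of $\sum_\alpha V_\alpha^*V_\alpha$ gives $\|Wh\|^2=\sum_\alpha\|V_\alpha h\|^2=\|h\|^2$, so $W$ is a well-defined isometry. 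Since the family $X^{(\alpha)}$ lies in the bounded set $K$, the direct sum $\mathbf Y=\bigoplus_\alpha X^{(\alpha)}$ is a bounded tuple, and coordinatewise we have $W^*\mathbf Y_iW=\sum_\alpha V_\alpha^*X^{(\alpha)}_iV_\alpha=Z_i$, with the sum converging strongly (indeed weakly, which suffices for identifying the compression).

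Next we use universality. Here we avoid assuming that $\mathbf Y\in K$, since the index set of $\alpha$ may exceed $\mu$. Instead we apply \cref{lem:universal-point} summand by summand. For each $\alpha$ there is a cardinal $\nu_\alpha$, an isometry $U_\alpha:\Hh_\alpha\to\mathcal U\otimes\ell_2(\nu_\alpha)$ onto a reducing subspace of $\mathbf X^{(\nu_\alpha)}$, and the relation $U_\alpha^*\mathbf X^{(\nu_\alpha)}U_\alpha=X^{(\alpha)}$ together with $\mathbf X^{(\nu_\alpha)}U_\alpha=U_\alpha X^{(\alpha)}$. Taking $U=\bigoplus_\alpha U_\alpha$, we get an isometry from $\bigoplus_\alpha\Hh_\alpha$ into $\bigoplus_\alpha\mathcal U\otimes\ell_2(\nu_\alpha)\cong\mathcal U\otimes\ell_2(\nu)$ with $\nu=\sum_\alpha\nu_\alpha$. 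Under this identification, $\bigoplus_\alpha\mathbf X^{(\nu_\alpha)}=\mathbf X^{(\nu)}$. Hence $\mathbf Y=U^*\mathbf X^{(\nu)}U$, and $V=UW$ is an isometry with $V^*\mathbf X^{(\nu)}V=W^*\mathbf YW=Z$. This gives equality on the nose. The phrase ``unitarily equivalent'' in the statement only absorbs the choice of identification $\bigoplus_\alpha\ell_2(\nu_\alpha)\cong\ell_2(\nu)$ and the unitary equivalences provided by \cref{lem:universal-point}.

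The main point requiring care is the bookkeeping of the direct sum of amplifications. We must check that the sum of the $\nu_\alpha$-fold amplifications is itself an amplification of $\mathbf X$. This holds because all summands are copies of the same bounded tuple $\mathbf X$, so boundedness is automatic and no closure of $K$ under large direct sums is needed. We must also confirm that the unitary equivalences in \cref{lem:universal-point} can be chosen simultaneously for all $\alpha$ via the axiom of choice. Both steps are routine once we note that we never need $\mathbf Y\in K$, only that it is a bounded tuple compressing from $\mathbf X^{(\nu)}$.
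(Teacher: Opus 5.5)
Your proof is correct and follows the paper's argument: you write the hull point as a compression of $\bigoplus_\alpha X^{(\alpha)}$, embed each summand as a reducing summand of some $\mathbf X^{(\nu_\alpha)}$, assemble these into one amplification $\mathbf X^{(\nu)}$, and compose the isometries, with your column-isometry and reducing-subspace bookkeeping filling in details the paper leaves implicit. Two cosmetic points: you should invoke the definition of universality for the given $\mathbf X$ rather than \cref{lem:universal-point}, which constructs one particular universal point; and the uniform bound on the summands comes from universality itself, since $\|X^{(\alpha)}_i\|\le\|\mathbf X_i\|$, not from a boundedness hypothesis on $K$, which this lemma does not assume.
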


\begin{proof}
Write a point of the hull as a compression of a direct sum of points of $K$.
By universality, each summand is a reducing summand of an amplification of
$\mathbf X$. Their direct sum is therefore a reducing summand of one
amplification $\mathbf X^{(\nu)}$; composing the isometries gives the
claim.
\end{proof}

The lemma concerns the algebraic nc hull. It does not identify a point of the
strong closure with a single compression of $\mathbf X$, nor is such an
identification needed below. The passage to the bounded strong closure in
\cref{thm:cartan-extension} instead uses normality of the realizing pencil via
\cref{lem:pencil-domain}.

\subsection{Free functions and neighborhoods}
A self-adjoint nc function $f$ on $K$ is a graded map
\[
 f:K\longrightarrow\coprod_{\Hh}B(\Hh)_{\mathrm{sa}}
\]
which respects unitary conjugation, direct sums, and reducing restrictions.
Here direct sums include uniformly bounded direct sums of length at most
$\mu$. One
may equivalently impose the usual intertwining condition \cite{KVV}. We use the
uniform nc topology: a neighborhood of
$X\in\Ss(\Hh)^{\mathcal I}$ contains a norm ball
of fixed radius around every amplification of the unitary orbit of $X$.
This is the topology naturally used by local free power series and by the
operator-system form of the royal-road theorem. It is also the topology in
which ``near one point'' automatically means ``near all its amplifications,''
as free analysis insists. A \emph{free domain} is a uniformly open real free
set.

A free domain $\Omega$ is \emph{matrix convex} if it is closed under all nc
convex combinations. A locally bounded free function
$F:\Omega\to\Ss^1$ is \emph{matrix convex} if
\begin{equation}\label{eq:jensen}
 F\!\left(\sum_\alpha V_\alpha^*X^{(\alpha)}V_\alpha\right)
 \preceq
 \sum_\alpha V_\alpha^*F(X^{(\alpha)})V_\alpha
\end{equation}
whenever the combination is defined in $\Omega$. On a matrix-convex domain,
this agrees with levelwise midpoint matrix convexity.

\section{Butterfly realizations and propagation}
\label{sec:butterfly}

\subsection{Affine free pencils}
The extension theorem uses two elementary properties: affine free maps respect
compressions, and positive lower bounds are preserved by convex combinations.
We use the standard operator-system language \cite{Paulsen}, allowing finite
and infinite coefficient spaces. A self-adjoint affine
free pencil is a graded map
\[
 P(X)=P_0+\Psi(X)
\]
obtained by amplifying a completely bounded self-adjoint affine map $\Psi$.
In finite-dimensional coordinates this is the familiar expression
\[
 P(X)=A_0\otimes I+\sum_{j=1}^g A_j\otimes X_j,
 \qquad A_j=A_j^*.
\]
Rectangular affine free pencils $\Lambda$ and self-adjoint affine free maps
$\ell$ are defined similarly. In particular, affine free maps commute with
direct sums and isometric compressions.

An all-level affine free map will be called \emph{normal} if it is continuous
on uniformly bounded sets for the componentwise strong operator topology. The
finite-coordinate spatial pencils in the second display are normal. Normality
is irrelevant for the algebraic nc hull; it is required only when a realization
is asserted to contain its bounded strong closure.

Set
\[
 \mathcal D_P=\{X:P(X)\succ0\},
\]
where $P(X)\succ0$ means that $P(X)$ is bounded below by a positive scalar
multiple of the identity.

\begin{lemma}\label{lem:pencil-domain}
The set $\mathcal D_P$ is an open matrix-convex free set. If
$P(X)\succeq\varepsilon I$ for every $X\in K$, then the same inequality holds
on $\ncco(K)$. If, in addition, $P$ is normal, it holds on
$\overline{\ncco}^{\sot}(K)$.
\end{lemma}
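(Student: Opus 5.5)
The plan has four parts, each resting on the single fact that affine free maps commute with direct sums and isometric compressions. Writing $P(X)=P_0+\Psi(X)$, we have
\[
 P\!\left(\sum_\alpha V_\alpha^*X^{(\alpha)}V_\alpha\right)=\sum_\alpha (I\otimes V_\alpha)^*P(X^{(\alpha)})(I\otimes V_\alpha)
\]
whenever $\sum_\alpha V_\alpha^*V_\alpha=I$ strongly. The constant term is handled by exactly this normalization. The $\Psi$ term is handled by complete boundedness together with the fact that an nc convex combination is an isometric compression of $\bigoplus_\alpha X^{(\alpha)}$. This identity first gives that $\mathcal D_P$ is graded and invariant under unitary conjugation and reducing restriction. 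Closure under bounded direct sums of length at most $\mu$ follows once the lower bounds are uniform; this is the only delicate point of the free-set claim.

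\textbf{Convexity and propagation.} If $P(X^{(\alpha)})\succeq\varepsilon I$ for every $\alpha$, the identity gives
\[
 P\!\left(\sum_\alpha V_\alpha^*X^{(\alpha)}V_\alpha\right)\succeq\varepsilon\sum_\alpha (I\otimes V_\alpha)^*(I\otimes V_\alpha)=\varepsilon I.
\]
This proves the second assertion directly for $\ncco(K)$, since every point of the hull is such a combination of points of $K$. For matrix convexity of $\mathcal D_P$ itself, the lower bounds $\varepsilon_\alpha$ may vary, so the same computation only yields strict positivity for finite combinations. For infinite combinations I would read matrix convexity as applying to families for which the combination stays in the domain, or equivalently to uniformly positive families. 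I expect this to be a matter of convention rather than a real obstacle.

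\textbf{Openness.} Suppose $P(X)\succeq\varepsilon I$. If $\|Y_i-X_i\|<\delta$ in the uniform topology, then $\|P(Y)-P(X)\|\le\|\Psi\|_{cb}\,\delta$. For the finite-coordinate spatial pencil this is bounded by $\sum_j\|A_j\|\,\delta$. The estimate is uniform over amplifications and unitary conjugates, so the ball of radius $\delta<\varepsilon/\|\Psi\|_{cb}$ lies in $\mathcal D_P$. With infinitely many variables I would need the norm on tuples to be such that $\Psi$ is bounded with respect to it; I expect this to be encoded in the phrase ``completely bounded affine map''.

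\textbf{Strong closure.} This is the step I expect to be the main obstacle. Let $X_\beta\to X$ be a bounded net in the componentwise SOT with $X_\beta\in\ncco(K)$. Normality gives $P(X_\beta)\to P(X)$ in SOT. For a fixed vector $h$,
\[
 \langle P(X)h,h\rangle=\lim_\beta\langle P(X_\beta)h,h\rangle\ge\varepsilon\|h\|^2,
\]
so $P(X)\succeq\varepsilon I$, because the positive cone shifted by $\varepsilon I$ is weakly closed. The subtlety is that normality must be applied with the coefficient amplification, meaning $I\otimes X_\beta\to I\otimes X$ strongly on uniformly bounded sets, which is exactly what the definition of normal supplies. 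I would also iterate closure only through bounded nets, which the definition of the bounded strong closure permits.
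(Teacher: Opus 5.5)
Your proposal is correct and follows the paper's own proof in more detail. The affine compression identity propagates a common lower bound through nc convex combinations, norm continuity of the pencil gives openness, and normality together with weak closedness of $\{T:T\succeq\varepsilon I\}$ handles the bounded strong closure.

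The caveat you flag is genuine, not merely a matter of convention. For $P(X)=I-X$, each scalar point $1-1/n$ lies in $\mathcal D_P$, but their uniformly bounded direct sum does not. So $\mathcal D_P$ is closed under infinite direct sums and infinite nc convex combinations only for uniformly positive families. Reading it as closure under ``combinations that stay in the domain'' would be vacuous, and is not equivalent to the uniformly positive version. The paper's one-line proof passes over this point.
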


\begin{proof}
Openness follows from norm continuity. Affineness gives
\[
 P\!\left(\sum_\alpha V_\alpha^*X^{(\alpha)}V_\alpha\right)
 =\sum_\alpha (I\otimes V_\alpha)^*P(X^{(\alpha)})(I\otimes V_\alpha),
\]
so positivity and a common lower bound are preserved by nc convex
combinations. For a normal pencil, a bounded strong limit preserves the
inequality as well.
\end{proof}

\begin{lemma}[Schur-complement certificate]\label{lem:schur}
Suppose $P\succ0$ on a free domain $\Omega$. Then
\begin{equation}\label{eq:butterfly-general}
 F(X)=\ell(X)+\Lambda(X)^*P(X)^{-1}\Lambda(X)
\end{equation}
is matrix convex on $\Omega$. More precisely,
\begin{equation}\label{eq:schur-general}
 Y\succeq F(X)
 \quad\Longleftrightarrow\quad
 \begin{pmatrix}
  Y-\ell(X)&\Lambda(X)^*\\
  \Lambda(X)&P(X)
 \end{pmatrix}\succeq0.
\end{equation}
\end{lemma}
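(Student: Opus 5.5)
The plan is to prove the Schur-complement equivalence \eqref{eq:schur-general} first, and then read off matrix convexity from it together with the affine compatibility of pencils with nc convex combinations. Fix $X\in\Omega$ with $P(X)\succeq\varepsilon I$, so that $P(X)^{-1}$ is a bounded positive operator. I will use the standard factorization
\[
 \begin{pmatrix} Y-\ell(X)&\Lambda(X)^*\\ \Lambda(X)&P(X)\end{pmatrix}
 =\begin{pmatrix} I&\Lambda(X)^*P(X)^{-1}\\ 0&I\end{pmatrix}
 \begin{pmatrix} Y-F(X)&0\\ 0&P(X)\end{pmatrix}
 \begin{pmatrix} I&0\\ P(X)^{-1}\Lambda(X)&I\end{pmatrix}.
\]
The outer factors are bounded and invertible, with bounded inverses, and they are adjoints of one another. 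Hence the block matrix is positive semidefinite exactly when the diagonal middle factor is. Because $P(X)\succ0$, that in turn holds exactly when $Y\succeq F(X)$. Working with genuine bounded inverses makes this valid on infinite-dimensional spaces too, and no closed-range issues arise.

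For matrix convexity, let $X=\sum_\alpha V_\alpha^*X^{(\alpha)}V_\alpha$ be an nc convex combination of points of $\Omega$ that is again in $\Omega$, and put $Y_\alpha=F(X^{(\alpha)})$. By the equivalence, the block matrix $M(X^{(\alpha)},Y_\alpha)$ is positive for each $\alpha$. Now compress by $W_\alpha=V_\alpha\oplus(I\otimes V_\alpha)$ and sum. Since $\ell$, $\Lambda$ and $P$ are affine free maps, they commute with nc convex combinations; this is the same identity used in the proof of \cref{lem:pencil-domain}, and it relies on $\sum V_\alpha^*V_\alpha=I$ to handle the constant terms. Each block of $\sum_\alpha W_\alpha^*M(X^{(\alpha)},Y_\alpha)W_\alpha$ is therefore the corresponding block of $M\bigl(X,\sum_\alpha V_\alpha^*Y_\alpha V_\alpha\bigr)$. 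The sum converges strongly because the family is uniformly bounded, the $Y_\alpha$ are uniformly bounded by local boundedness, and $\sum V_\alpha^*V_\alpha$ converges strongly. A strongly convergent sum of positive operators is positive, so applying the equivalence at $X$ gives $F(X)\preceq\sum_\alpha V_\alpha^*F(X^{(\alpha)})V_\alpha$, which is \eqref{eq:jensen}.

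I expect the main obstacle to be the bookkeeping in this last step rather than the algebra. One has to check three things. First, the rectangular pencil $\Lambda$ transforms with $I\otimes V_\alpha$ on its output side and $V_\alpha$ on its input side, so the off-diagonal blocks match. Second, the affine constants $\ell_0$, $\Lambda_0$ and $P_0$ reassemble correctly through $\sum V_\alpha^*V_\alpha=I$. Third, the infinite sums converge in the strong operator topology. Finally, I need to verify that $F$ is a free function, so that the matrix convexity in \cref{sec:all-level} makes sense. This is immediate: $\ell$, $\Lambda$ and $P$ respect direct sums, unitary conjugation and reducing restrictions, inversion respects them, and the direct sums involved are uniformly bounded. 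Local boundedness holds because $\|P(X)^{-1}\|$ stays bounded near any point where $P\succeq\varepsilon I$.
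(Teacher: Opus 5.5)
Your proposal is correct and follows the paper's proof: you establish the Schur-complement equivalence, then use that the block pencil is affine in $(X,Y)$, so its positivity set (the epigraph of $F$) is closed under nc convex combinations, which gives the Jensen inequality \eqref{eq:jensen}. You make explicit what the paper states in one line, namely the congruence factorization with bounded inverses, the compression by $V_\alpha\oplus(I\otimes V_\alpha)$, and the strong-convergence bookkeeping.
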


\begin{proof}
The equivalence is the Schur complement. The block matrix on the right is
affine in $(X,Y)$, so its positivity set is matrix convex. Hence the epigraph
of $F$ is matrix convex, which is equivalent to \eqref{eq:jensen}.
\end{proof}

The principal analytic input is the following local form of the
noncommutative Kraus theorem \cite{Kraus}. It is the operator-system and
all-level formulation of \cite[Corollary~4.5]{PTD}; its finite-dimensional
rational precursor is \cite[Theorem~3.3]{HMV}.

\begin{theorem}[Local butterfly theorem]\label{thm:local-butterfly}
Let $G$ be a locally bounded matrix-convex free function on a matrix-convex nc
neighborhood of a point $A$. After shrinking the neighborhood, there exist
affine free maps $\ell,\Lambda,P$, with $P$ self-adjoint and
$P(A)\succeq\varepsilon I$ for some $\varepsilon>0$, such that
\[
 G(X)=\ell(X)+\Lambda(X)^*P(X)^{-1}\Lambda(X)
\]
near $A$. The coefficient Hilbert space may be infinite-dimensional.
Conversely, every such formula is matrix convex on $\mathcal D_P$.
\end{theorem}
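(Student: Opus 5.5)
The converse direction is already contained in \cref{lem:pencil-domain} and \cref{lem:schur}, so the work is the forward direction. I would reduce it to the noncommutative Kraus/royal-road theorem of \cite{PTD}. There, a matrix-convex free function $G$ near $A$ is written through its second-order nc derivative. Put $H=X-A$. Since $G$ is locally bounded and matrix convex, it is real analytic near $A$. By \cite{PTD}, it satisfies the local identity
\[
 G(A+H)=G(A)+DG(A)[H]+\int_0^1 (1-t)\,D^2G(A+tH)[H,H]\,dt .
\]
The nc Hessian $D^2G(X)[H,H]$ is positive semidefinite on the appropriate nc directions; this is the matrix-convexity characterization via the Jensen inequality \eqref{eq:jensen} applied to $2\times2$ block perturbations $\begin{pmatrix}X&H\\H&X\end{pmatrix}$ compressed to corners.

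\textbf{Step 1 (Kraus/middle-matrix form of the Hessian).} At the base point $A$, write the Hessian as a completely positive map in $H$. By Stinespring, on a possibly infinite-dimensional coefficient space $\mathcal K$, it takes the form $D^2G(A)[H,H]=2\,\Lambda_0(H)^*\Lambda_0(H)$ with $\Lambda_0$ linear. This is where the all-level formulation matters. The nc Hessian is a positive map defined at every level, so it is completely positive, and its Stinespring space may be infinite-dimensional.

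\textbf{Step 2 (the butterfly ansatz).} Following \cite[Corollary~4.5]{PTD}, I would show that the higher-order Taylor terms are governed by a single affine self-adjoint pencil $P(X)=I-\Psi(X-A)$ on $\mathcal K$, where $\Psi$ is completely bounded and self-adjoint. The target is the Neumann-series identity
\[
 G(A+H)=\ell(A+H)+\Lambda_0(H)^*\sum_{n\ge0}\Psi(H)^n\Lambda_0(H)=\ell+\Lambda^*P^{-1}\Lambda .
\]
Here $\ell$ is the affine Taylor part and $\Lambda(X)=\Lambda_0(X-A)$ is affine. Concretely, the $(n+2)$-th homogeneous term of the power series should factor as $\Lambda_0(H)^*\Psi(H)^n\Lambda_0(H)$. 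This realization is the content of the royal-road theorem. Since it is stated earlier as the cited input, I would invoke it rather than reprove it, and check only that the operator-system setting supplies a bounded $\Psi$.

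\textbf{Step 3 (positivity and shrinking).} At $X=A$ we have $P(A)=I$, so $P(A)\succeq\varepsilon I$ holds with any $\varepsilon<1$. Because $\Psi$ is completely bounded, $P(X)\succeq\tfrac12 I$ on a uniform nc ball around $A$. This ball is a neighborhood in the uniform topology: it contains every amplification, since cb norms are level-independent. The Neumann series then converges there, and agreement of the two power series gives $G=\ell+\Lambda^*P^{-1}\Lambda$ near $A$.

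The main obstacle is Step 2: showing that all higher Taylor coefficients factor through one bounded $\Psi$ on the Stinespring space. This amounts to convexity forcing the Hankel-type structure of the coefficients. I would try first to obtain it by applying the Hessian factorization of Step 1 at the shifted points $A+tH$ and differentiating the Stinespring factorization in $t$.
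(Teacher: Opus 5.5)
Your overall route is the paper's. The converse is \cref{lem:schur} on the open matrix-convex set $\mathcal D_P$ of \cref{lem:pencil-domain}, and the forward direction is imported from \cite[Corollary~4.5]{PTD} rather than reproved. The gap is at the one point where the paper does more than cite. The center $A$ is an arbitrary operator point (in the application, the non-scalar universal point $\mathbf X$), whereas the noncommutative Kraus theorem is normalized with $P=I$ at a scalar center.

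Your centered ansatz $\ell(X)=G(A)+DG(A)[X-A]$, $\Lambda(X)=\Lambda_0(X-A)$, $P(X)=I-\Psi(X-A)$ with $P(A)=I$ cannot consist of affine free maps of the form $P_0\otimes I+\sum_jP_j\otimes X_j$ once $A$ is non-scalar. In one variable, slice $P_0\otimes I+P_1\otimes A=I$ and $L_0\otimes I+L_1\otimes A=0$ against the linearly independent pair $I,A$. This forces $P\equiv I$ and $\Lambda\equiv 0$. So for $G(X)=X^2$ at $A=\operatorname{diag}(0,1)$ your ansatz would make $G$ affine, although $\ell=0$, $\Lambda(X)=X$, $P=1$ is a valid realization.

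What your construction actually produces are maps in $X-A^{(\nu)}$ with $B(\mathcal H)$-valued coefficients, defined only at amplification levels of $A$. They cannot be evaluated on the small reducing summands of $\mathbf X$ or on compressions $V^*\mathbf X^{(\nu)}V$, and that is exactly how \cref{thm:cartan-extension} uses the theorem. The paper supplies the missing step. It coordinatizes so that $A$ becomes a scalar point over the operator system it generates, applies \cite{PTD} there with $P=I$, and then undoes the coordinatization. Complete boundedness keeps the formula compatible with amplification and spatially natural under reducing corners, but only $P(A)\succeq\varepsilon I$ survives, which is why the statement is phrased that way. Your Step~1 has the same defect. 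At a non-scalar $A$ the Hessian is available only at amplification levels of $A$, and since it is quadratic rather than linear it needs a kernel formulation before Stinespring applies.

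Separately, the fallback you propose for Step~2 would not work if you tried to prove it rather than cite it. A Stinespring factorization of $D^2G(A+tH)$ is determined only up to a partial isometry on the dilation space, so there is no canonical family to differentiate in $t$. Derivatives along a single self-adjoint line also do not assemble all higher Taylor coefficients into powers of one bounded $\Psi$ on a fixed space. That structure is the substance of the royal-road theorem and has to be taken from \cite{PTD}. So the real obstacle in your reduction is not Step~2 but the change of coordinates at a non-scalar center.
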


\begin{remark}
The usual normalization is $P(A)=I$. At a non-scalar point one first
coordinatizes, regarding $A$ as a scalar point over the operator system it
generates. Complete boundedness of the coefficient maps makes the formula
compatible with amplification. Undoing the coordinatization retains the
spatial naturality under reducing corners. This is precisely the feature used
by the universal-point argument. We use this spatial all-level form of
\cite[Corollary~4.5]{PTD}. This is a change of coordinates rather than a
change in the underlying theorem, and the operator-system proof does not use
compactness of the Hilbert-space level.
\end{remark}

\subsection{Cartan convexity}

\begin{definition}\label{def:cartan}
Let $K$ be a real free set and let $f$ be a self-adjoint nc function on $K$.
We say that $f$ is \emph{Cartan convex at $A\in K$} if there are a uniform nc
neighborhood $U$ of $A$ and a locally bounded matrix-convex free function
$G:U\to\Ss^1$ such that
\[
 G=f\quad\text{on }U\cap K.
\]
The function is \emph{Cartan convex} if it is Cartan convex at every point of
$K$.
\end{definition}

Requiring agreement on $U\cap K$, rather than merely equality at $A$, is the
natural germ formulation. In the proof, equality at the universal point gives
equality on every small reducing summand, and the component decomposition then
recovers every point of $K$.

\begin{theorem}[Cartan extension theorem]\label{thm:cartan-extension}
Let $K$ be a bounded real free set in the variable set $\mathcal I$, and let
$f$ be Cartan convex on $K$. The local lift may be taken at a universal point
on a Hilbert space of dimension at most
$2^{\max\{|\mathcal I|,\aleph_0\}}$. There are affine free maps
$\ell,\Lambda,P$ and an
$\varepsilon>0$ such that
\begin{enumerate}[label=\textup{(\roman*)}]
\item $P(X)\succeq\varepsilon I$ for every
      $X\in\ncco(K)$;
\item the open matrix-convex free set $\Omega=\mathcal D_P$ contains
      $\ncco(K)$;
\item the function
\[
 F(X)=\ell(X)+\Lambda(X)^*P(X)^{-1}\Lambda(X),\qquad X\in\Omega,
\]
is locally bounded and matrix convex; and
\item $F|_K=f$.
\end{enumerate}
Thus $f$ extends to an open matrix-convex neighborhood of the nc convex hull of
$K$. If $P$ is normal, both occurrences of $\ncco(K)$ in \textup{(i)} and
\textup{(ii)} may be replaced by $\overline{\ncco}^{\sot}(K)$.
\end{theorem}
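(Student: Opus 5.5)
The plan is to run the koan's argument directly. First, I invoke \cref{lem:universal-point} to obtain a universal point $\mathbf X=\bigoplus_{Y\in K_{\leq\lambda}}Y\in K$ on a Hilbert space $\mathcal U$ with $\dim\mathcal U\leq 2^\lambda$. Since $f$ is Cartan convex at $\mathbf X$, there are a uniform nc neighborhood $U$ of $\mathbf X$ and a locally bounded matrix-convex $G$ on $U$ with $G=f$ on $U\cap K$. I shrink $U$ to a matrix-convex nc neighborhood, for instance an nc ball about $\mathbf X$ intersected with a domain $\mathcal D_{P'}$. I then apply \cref{thm:local-butterfly} at $A=\mathbf X$. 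This yields affine free maps $\ell,\Lambda,P$ with $P(\mathbf X)\succeq\varepsilon I$ and
\[
 G(X)=\ell(X)+\Lambda(X)^*P(X)^{-1}\Lambda(X)
\]
near $\mathbf X$. Because the uniform nc topology contains balls around every amplification and unitary conjugate, the identity also holds near every $\mathbf X^{(\nu)}$ and its unitary orbit.

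For (i), affine free maps commute with amplification, so $P(\mathbf X^{(\nu)})=P(\mathbf X)^{(\nu)}\succeq\varepsilon I$. Compressions then satisfy $P(V^*\mathbf X^{(\nu)}V)=(I\otimes V)^*P(\mathbf X^{(\nu)})(I\otimes V)\succeq\varepsilon I$. Unitary equivalence is harmless, so \cref{lem:hull-universal-compression} gives (i) on all of $\ncco(K)$. Alternatively, I first get the bound on $K$, since each point is a reducing summand of some $\mathbf X^{(\nu)}$, and then apply \cref{lem:pencil-domain}. That same lemma gives (ii) and the normal-pencil statement for $\overline{\ncco}^{\sot}(K)$. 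Item (iii) follows from \cref{lem:schur}. Local boundedness comes from norm continuity of the affine maps together with the bound $\|P(X)^{-1}\|\leq 2/\varepsilon$ on a neighborhood where $P\succeq\varepsilon/2$.

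For (iv), take $X\in K$. By \cref{lem:universal-point} and its proof, $X$ is unitarily equivalent to a direct sum $\bigoplus_j Y_j$ with each $Y_j\in K_{\leq\lambda}$, and each $Y_j$ is literally a reducing summand of $\mathbf X$. Both $F$ and $f$ respect reducing restrictions. Hence
\[
 F(Y_j)=F(\mathbf X)|_{Y_j}=G(\mathbf X)|_{Y_j}=f(\mathbf X)|_{Y_j}=f(Y_j).
\]
This uses $G=F$ near $\mathbf X$ and $G=f$ at $\mathbf X\in U\cap K$. Both functions also respect direct sums of length at most $\mu$ and unitary conjugation, so $F(X)=f(X)$. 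The component decomposition has at most $\dim\Hh_X$ pieces, so a direct sum of arbitrary length might be needed. To handle this, I group the pieces by isomorphism class: $X$ is then a reducing summand of $\mathbf X^{(\nu)}$, and $F=G=f$ on $\mathbf X^{(\nu)}$ by nc-ness near amplifications. Restricting to the summand then finishes the argument.

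The main obstacle I expect is justifying that the local realization from \cref{thm:local-butterfly} at the infinite-dimensional point $\mathbf X$ is valid uniformly on all amplifications and unitary conjugates. The same issue arises in showing that $F$, defined by the formula on all of $\mathcal D_P$, is a genuine nc function compatible with $f$ on $K$. Here I would lean on the remark after \cref{thm:local-butterfly}: the coordinatized realization has completely bounded coefficients, so it is amplification-invariant, and uncoordinatizing preserves reducing corners. Equality at the single point $\mathbf X$, together with these naturality properties, then suffices.
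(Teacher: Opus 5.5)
Your overall route matches the paper's: universal point, local butterfly at $\mathbf X$, propagation of $P(\mathbf X)\succeq\varepsilon I$ to every compression of every amplification, then the pencil-domain and Schur-complement lemmas. Items (i)--(iii) are fine.

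The step that fails is your patch at the end of (iv). You are right that a point $X\in K$ may have more than $\mu$ components, so the direct-sum axiom, which allows at most $\mu$ summands, cannot reassemble $X$. Your fix, however, asserts $F=G=f$ at $\mathbf X^{(\nu)}$. But $f$ is defined only on $K$, and $G=f$ is known only on $U\cap K$. The set $K$ is guaranteed to contain $\mathbf X^{(\nu)}$ only when $\nu\leq\mu$. Since $\dim\mathcal U\leq\mu$, a point with $\dim\Hh_X>\mu$ forces $\nu>\mu$, and then $\mathbf X^{(\nu)}$ can lie outside $K$.

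For a concrete case, take one variable and let $K$ be the orthogonal projections of rank at most $\mu$ on arbitrary Hilbert spaces. This is a bounded real free set. The point $X=0$ on a space of dimension $2^\mu$ belongs to $K$. Any $\mathbf X^{(\nu)}$ having it as a reducing summand needs $\nu\geq 2^\mu$. It then has rank at least $\nu>\mu$, because the $1\times1$ projection $1$ is a summand of $\mathbf X$. So $f(\mathbf X^{(\nu)})$ is undefined.

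No long direct sum and no amplification are needed. You already have $F(Y_j)=f(Y_j)$ for the components. Both $F$ and $f$ respect reducing restrictions; a two-summand splitting $X\cong X|_{\mathcal M}\oplus X|_{\mathcal M^\perp}$ suffices for this. Hence $F(X)$ and $f(X)$ both commute with the projection onto each component subspace $\mathcal M_j$. There they restrict to $F(X|_{\mathcal M_j})=f(X|_{\mathcal M_j})$, by unitary invariance and your computation. The $\mathcal M_j$ are pairwise orthogonal with closed span $\Hh_X$, so these two bounded operators coincide.

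This is exactly how the paper concludes (iv). It also makes unnecessary your claim that the realization holds near every $\mathbf X^{(\nu)}$. Only the single equality $F(\mathbf X)=G(\mathbf X)=f(\mathbf X)$ is used, together with spatial naturality on the reducing summands of $\mathbf X$.
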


\begin{proof}
If $K$ is empty there is nothing to prove. Otherwise, let $\mathbf X$ be the
universal point from \cref{lem:universal-point}. By Cartan convexity there is a
matrix-convex lift $G$ on a neighborhood of $\mathbf X$ with
$G(\mathbf X)=f(\mathbf X)$.
Apply \cref{thm:local-butterfly} at $\mathbf X$ to obtain
\[
 G(Z)=\ell(Z)+\Lambda(Z)^*P(Z)^{-1}\Lambda(Z),
 \qquad P(\mathbf X)\succeq\varepsilon I,
\]
on a possibly smaller neighborhood.

By \cref{lem:hull-universal-compression}, after a unitary identification every
$Z\in\ncco(K)$ has the form $Z=V^*\mathbf X^{(\nu)}V$ for an isometry $V$.
Since $P$ is affine free,
\[
 P(Z)=(I\otimes V)^*P(\mathbf X)^{(\nu)}(I\otimes V)
      \succeq\varepsilon I.
\]
Thus $P$ has the required uniform lower bound on the whole hull, directly from
positivity at the universal point. By \cref{lem:pencil-domain},
$\Omega=\{P\succ0\}$ is an open matrix-convex neighborhood of the hull. If $P$
is normal, the same lemma includes the bounded strong closure.

Define $F$ on $\Omega$ by the displayed butterfly formula.
\Cref{lem:schur} shows that $F$ is matrix convex. Every
$Y\in K_{\leq\lambda}$ is a reducing summand of $\mathbf X$, so
$F(\mathbf X)=G(\mathbf X)=f(\mathbf X)$ and spatial naturality give
$F(Y)=f(Y)$. Now decompose an arbitrary $X\in K$ using the component
construction in the proof of \cref{lem:universal-point}. After unitary
identification, each component belongs to $K_{\leq\lambda}$. The reducing restrictions of $F(X)$
and $f(X)$ therefore agree on every component, and hence $F(X)=f(X)$.
This proves all four assertions.
\end{proof}

\begin{corollary}[Finite data]\label{cor:finite-data}
For a finite free data set, the same conclusion holds using the ordinary
finite direct sum of the interpolation nodes. No operator-level compactness
argument is required.
\end{corollary}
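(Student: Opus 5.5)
Interpret a finite free data set as the free set generated by finitely many nodes $X^{(1)},\dots,X^{(m)}$, each acting on a finite-dimensional space (or, more generally, each bounded). Concretely, let $K$ be the smallest real free set containing them: all unitary conjugates of reducing summands of direct sums of amplifications of the nodes, using at most $\mu$ summands. My plan is to rerun the proof of \cref{thm:cartan-extension}, replacing the cardinal construction of \cref{lem:universal-point} by the finite direct sum
\[
 \mathbf X_0=X^{(1)}\oplus\cdots\oplus X^{(m)}.
\]

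The first step is to check that $\mathbf X_0$ is universal for $K$ in the sense of the definition. Every point of $K$ is unitarily equivalent to a reducing summand of a direct sum of amplifications of the nodes. Each $X^{(j)}$ is itself a reducing summand of $\mathbf X_0$. Regrouping therefore shows that the point is a reducing summand of $\mathbf X_0^{(\nu)}$ for a suitable $\nu$.

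The cardinal counting is never used, so \cref{lem:hull-universal-compression} applies verbatim. Every point of $\ncco(K)$ is then a compression $V^*\mathbf X_0^{(\nu)}V$.

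The next step is to apply Cartan convexity at $\mathbf X_0$ and invoke \cref{thm:local-butterfly} there. This gives $\ell,\Lambda,P$ with $P(\mathbf X_0)\succeq\varepsilon I$. In the finite-dimensional case the lift is an ordinary matrix-convex function near a matrix point, so the rational form of \cite{HMV} or \cite[Corollary~4.5]{PTD} at a single finite level suffices.

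From here the remaining steps follow the theorem:
\begin{itemize}
\item compression and affineness carry the bound $P\succeq\varepsilon I$ to all of $\ncco(K)$ via \cref{lem:pencil-domain};
\item \cref{lem:schur} makes $F$ matrix convex on $\mathcal D_P$;
\item agreement with $f$ on $K$ follows from spatial naturality. Each node is a reducing summand of $\mathbf X_0$, and every point of $K$ is built from nodes by unitary conjugation, direct sums and reducing restrictions, all of which both $F$ and $f$ respect.
\end{itemize}

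The only point needing care is the agreement step. We have $F(\mathbf X_0)=G(\mathbf X_0)=f(\mathbf X_0)$ directly, so restricting to the node summands gives $F(X^{(j)})=f(X^{(j)})$. Agreement on reducing summands of amplifications of direct sums then follows from the nc axioms alone. No component decomposition of the kind in \cref{lem:universal-point} is needed, because $K$ is generated by the nodes.

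This explains the second sentence of the statement. No passage to a large Hilbert-space level, and hence no operator-level compactness or limiting argument, ever enters. The universal point is a finite matrix tuple whenever the nodes are matrices.
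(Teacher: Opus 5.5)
Your proposal is correct and follows the argument the paper intends: replace the universal point of \cref{lem:universal-point} by $\mathbf X_0=X^{(1)}\oplus\cdots\oplus X^{(m)}$ and rerun the proof of \cref{thm:cartan-extension}, with the component decomposition replaced by the observation that every point of $K$ is a reducing summand of a direct sum of nodes that itself lies in $K$. One small correction to your aside: \cite[Theorem~3.3]{HMV} covers only rational lifts, so for a general locally bounded lift you need \cref{thm:local-butterfly} (the spatial form of \cite[Corollary~4.5]{PTD}), applied to $G$ on a uniform nc neighborhood of $\mathbf X_0$ that includes its amplifications, not at a single level.
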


\begin{remark}[Bounded strong closure]
No topological closure hypothesis on $K$ is needed for the algebraic hull conclusion.
Inclusion of the bounded strong closure requires normality of $P$. This is the
operator-topological continuity that also appears in the hull theory of
\cite{JKMMPGamma,KMS}.
\end{remark}

\begin{remark}[No patching]
There is no compatibility calculation among the local lifts $G_A$. The lift at
the universal point already contains every small component as a reducing
corner, and those components recover every other point.
This is the principal role of admitting all Hilbert-space levels.
\end{remark}

\section{\texorpdfstring{$\Gamma$-convexity}{Gamma-convexity} and graph lifts}\label{sec:gamma}

A graph map specifies which moments an admissible compression must preserve
and permits ordinary matrix convexity to be applied in a larger coordinate
space. It is important to distinguish behavior on the graph from the existence
of a convex function on a neighborhood of the graph.

Let
\[
 \Gamma=(\gamma_j)_{j\in J}
\]
be a possibly infinite tuple of self-adjoint free polynomials, or more
generally a self-adjoint nc map into an operator space whose coordinates are
locally bounded on a common free domain. In the infinite-coordinate case all
affine maps below are understood to be completely bounded on the chosen
operator-space completion, and $\Gamma(K)$ is required to be bounded there.
The proofs are unchanged. After relabeling coordinates, we assume throughout
that $\mathcal I\subseteq J$ and
\begin{equation}\label{eq:gamma-coordinates}
 \gamma_i(x)=x_i,\qquad i\in\mathcal I.
\end{equation}
Thus $\Gamma$ is injective and respects direct sums, unitary conjugation, and
reducing restrictions.

\begin{definition}
A pair $(X,V)$, with $V:\Hh\to\Kk$ an isometry, is a
\emph{$\Gamma$-pair} if
\begin{equation}\label{eq:gamma-pair}
 V^*\Gamma(X)V=\Gamma(V^*XV).
\end{equation}
A real free set $D$ is \emph{$\Gamma$-convex} if
$X\in D$ and $(X,V)$ a $\Gamma$-pair imply $V^*XV\in D$. A self-adjoint free
function $f$ on $D$ is \emph{intrinsically $\Gamma$-convex} if
\begin{equation}\label{eq:gamma-jensen}
 f(V^*XV)\preceq V^*f(X)V
\end{equation}
for every $\Gamma$-pair for which both points lie in $D$.
\end{definition}

The $\Gamma$-convex hull, denoted $\Gco(K)$, is the smallest $\Gamma$-convex
real free set containing $K$. The next identity is
\cite[Propositions~2.1 and~2.2]{JKMMPGamma}. We include its short proof for
completeness.

\begin{proposition}[Graph-hull identity]\label{prop:gamma-hull}
For every real free set $K$,
\begin{equation}\label{eq:graph-hull}
 \Gco(K)=\Gamma^{-1}\!\left(\ncco(\Gamma(K))\right).
\end{equation}
\end{proposition}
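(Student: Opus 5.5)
We will prove the two inclusions separately, writing $H=\Gamma^{-1}(\ncco(\Gamma(K)))$. Since $\Gco(K)$ is by definition the smallest $\Gamma$-convex real free set containing $K$, it suffices to show three things: (a) $H$ contains $K$; (b) $H$ is a $\Gamma$-convex real free set; (c) every $\Gamma$-convex real free set $D\supseteq K$ contains $H$.

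Step (a) is immediate, since $\Gamma(K)\subseteq\ncco(\Gamma(K))$. For (b), we first check that $H$ is a real free set. $\Gamma$ respects unitary conjugation, reducing restrictions and direct sums, and $\ncco(\Gamma(K))$ is closed under these operations. Injectivity on the image is not even needed for preimages. The only point is that a reducing summand of $\Gamma(X)$ coming from a reducing subspace of $X$ is $\Gamma$ of the summand, which is the nc-function property.

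For $\Gamma$-convexity, let $X\in H$ and let $(X,V)$ be a $\Gamma$-pair. Then $\Gamma(V^*XV)=V^*\Gamma(X)V$ is an isometric compression of a point of $\ncco(\Gamma(K))$. We must check that nc convex hulls are closed under isometric compression. If $\Gamma(X)=W^*(\bigoplus_\alpha\Gamma(X^{(\alpha)}))W$, then $V^*\Gamma(X)V=(WV)^*(\cdots)(WV)$ with $WV$ an isometry. Hence $V^*XV\in H$.

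For (c), the substantive direction, fix $X\in H$. We write $\Gamma(X)=W^*\Gamma(\mathbf Y)W$ with $\mathbf Y=\bigoplus_\alpha X^{(\alpha)}$, $X^{(\alpha)}\in K$, and $W$ an isometry. The direct sum is $\Gamma$ of a direct sum because $\Gamma$ respects direct sums. Reading off the coordinate entries $\gamma_i(x)=x_i$, $i\in\mathcal I$, from \eqref{eq:gamma-coordinates} gives $X=W^*\mathbf YW$. Therefore
\[
 W^*\Gamma(\mathbf Y)W=\Gamma(X)=\Gamma(W^*\mathbf YW),
\]
so $(\mathbf Y,W)$ is a $\Gamma$-pair. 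We need $\mathbf Y\in D$, since $D$ is a real free set containing $K$ and closed under direct sums. Then $\Gamma$-convexity gives $X=W^*\mathbf YW\in D$.

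The main obstacle is the cardinality bookkeeping in (c): $D$ is only assumed closed under direct sums of at most $\mu$ summands. The family indexing the nc combination may a priori be larger, and the summands must be uniformly bounded. To handle the count, we first decompose each summand into components of dimension at most $\lambda$ as in \cref{lem:universal-point}. We then discard repeats up to unitary equivalence, absorbing multiplicities into the isometry via amplification. We expect this to reduce to at most $\mu$ summands, modulo amplifications, which we must also show lie in $D$; if needed we replace $\mathbf Y$ by the universal point when $K$ is bounded. For uniform boundedness, the bound on $\Gamma(K)$ assumed in the infinite-coordinate case controls the coordinates. If the direct-sum closure is inadequate in general, we would restrict the statement to combinations of admissible length, matching the convention behind $\ncco$.
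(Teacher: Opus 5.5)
Your steps (a)--(c) are the paper's argument, written out more carefully. The paper's one-line first inclusion is your (b) in compressed form. Its converse is exactly your (c): form $\bigoplus_\alpha X^{(\alpha)}$ with the column isometry, read off the original coordinates via \eqref{eq:gamma-coordinates}, and observe that the remaining coordinates say the pair is a $\Gamma$-pair. The paper simply asserts that this direct sum lies in $\Gco(K)$. So the obstacle you isolate in your last paragraph is real, and it is the one place where your proposal is not yet a proof. A smaller point: the uniform bound on the $X^{(\alpha)}$ should come from the convention that an nc combination is a compression of a \emph{bounded} direct sum, since the original coordinates occur among those of $\Gamma$. The proposition does not assume $\Gamma(K)$ bounded.

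Your proposed repair cannot close the gap in general, because an amplification with more than $\mu$ copies need not lie in $D$. The same applies to $\mathbf X^{(\nu)}$ for the universal point. In fact, with the definitions as written, the identity fails above level $\mu$. Take $\Gamma(x)=x$, so that every isometry yields a $\Gamma$-pair, and
\[
 K=\{X:\dim\Hh_X\leq\mu,\ \sup_i\|X_i\|\leq1\}.
\]
This is a $\Gamma$-convex real free set, since a direct sum of at most $\mu$ such points has dimension at most $\mu\cdot\mu=\mu$; hence $\Gco(K)=K$. However, choosing $V_\alpha=e_\alpha^*$ on $\ell_2(\nu)$ with $\nu>\mu$, and the level-one zero tuple in every slot, exhibits the zero tuple on $\ell_2(\nu)$ as a point of $\ncco(K)$ that is not in $K$.

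What your grouping argument does prove is the identity for points acting on spaces $\Hh$ with $\dim\Hh\leq\mu$. Fix a class $Y\in K_{\leq\lambda}$ acting on $\Hh_Y$. The part of the compressing isometry landing in $\Hh_Y\otimes\ell_2(\nu_Y)$ has range inside $\Hh_Y\otimes N_Y$, where $N_Y$ is the closed span of the slices $(e^*\otimes I)\xi$, with $e$ running over an orthonormal basis of $\Hh_Y$. This subspace reduces the amplification, and $\dim N_Y\leq\lambda\dim\Hh\leq\mu$. So at most $\mu$ copies of each of at most $\mu$ classes are needed, and step (c) then applies.

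For the full statement you need your fallback. Either restrict $\ncco$ to uniformly bounded combinations with at most $\mu$ summands, in which case (b) survives because $\mu\cdot\mu=\mu$ and (c) needs no reduction; or require $\Gamma$-convex sets to be closed under arbitrary bounded direct sums. Contrary to your phrasing, this restriction is not part of the paper's definition of $\ncco$. Only the inclusion $\Gco(K)\subseteq\Gamma^{-1}(\ncco(\Gamma(K)))$ is used in \cref{thm:gamma-extension}, and your (a) and (b) establish it with no cardinality issue.
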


\begin{proof}
If $Y=V^*XV$ for a $\Gamma$-pair, then
$\Gamma(Y)=V^*\Gamma(X)V$, so the left-hand side maps into the hull on the
right. Conversely, suppose
\[
 \Gamma(Y)=\sum_\alpha V_\alpha^*\Gamma(X^{(\alpha)})V_\alpha.
\]
Let $X=\bigoplus_\alpha X^{(\alpha)}$ and let $V$ be the column isometry with
entries $V_\alpha$. Comparing the original-coordinate entries, using
\eqref{eq:gamma-coordinates}, gives $Y=V^*XV$. The full displayed identity
says exactly that $(X,V)$ is a $\Gamma$-pair. Hence $Y\in\Gco(K)$.
\end{proof}

\begin{definition}[Lift-Cartan $\Gamma$-convexity]
\label{def:lift-cartan-gamma}
Let $f$ be a self-adjoint nc function on $K$. Define the graph function
\[
 \widehat f:\Gamma(K)\to\Ss^1,
 \qquad \widehat f(\Gamma(X))=f(X).
\]
We say that $f$ is \emph{lift-Cartan $\Gamma$-convex} if $\widehat f$ is
Cartan convex on $\Gamma(K)$. Equivalently, near every $X\in K$ there is an
ordinary matrix-convex free function $G_X$ in the lifted variables such that
\[
 f(Y)=G_X(\Gamma(Y))
\]
for nearby $Y\in K$.
\end{definition}

\begin{theorem}[\texorpdfstring{$\Gamma$-Cartan}{Gamma-Cartan} extension and realization]
\label{thm:gamma-extension}
Let $K$ be a bounded real free set in the variables $\mathcal I$, and suppose
that $\Gamma(K)$ is a bounded real free set in the lifted variables $J$. Put
\[
 \lambda_\Gamma=\max\{|J|,\aleph_0\}.
\]
If $f$ is lift-Cartan $\Gamma$-convex on $K$, then the universal lifted point
may be chosen on a Hilbert space of dimension at most $2^{\lambda_\Gamma}$,
and
there exist affine free maps $\ell,\Lambda,P$ such that
\begin{equation}\label{eq:gamma-realization}
 F(X)=\ell(\Gamma(X))+
 \Lambda(\Gamma(X))^*P(\Gamma(X))^{-1}\Lambda(\Gamma(X))
\end{equation}
has the following properties:
\begin{enumerate}[label=\textup{(\roman*)}]
\item $F=f$ on $K$;
\item $\Omega_\Gamma=\{X:P(\Gamma(X))\succ0\}$ is an open
      $\Gamma$-convex neighborhood of $\Gco(K)$;
\item $F$ is intrinsically $\Gamma$-convex on $\Omega_\Gamma$; and
\item the epigraph of $F$ is certified by the block $\Gamma$-pencil
\begin{equation}\label{eq:gamma-epigraph}
 \begin{pmatrix}
  Y-\ell(\Gamma(X))&\Lambda(\Gamma(X))^*\\
  \Lambda(\Gamma(X))&P(\Gamma(X))
 \end{pmatrix}\succeq0.
\end{equation}
\end{enumerate}
If $P$ is normal and $\Gamma$ is bounded-SOT continuous, then
$\Omega_\Gamma$ also contains the closed lifted hull
\[
 \Gamma^{-1}\!\left(\overline{\ncco}^{\sot}(\Gamma(K))\right).
\]
\end{theorem}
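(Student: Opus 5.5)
The plan is to apply \cref{thm:cartan-extension} upstairs, to the lifted set $\Gamma(K)$ in the variable set $J$, and then pull back along $\Gamma$ using \cref{prop:gamma-hull}. First I will check that $\Gamma(K)$ is a bounded real free set in the variables $J$. This holds because $\Gamma$ respects unitary conjugation, direct sums and reducing restrictions. In the infinite-coordinate case, boundedness is part of the hypothesis. The one point needing care is closure of $\Gamma(K)$ under reducing subspaces. A reducing subspace of $\Gamma(X)$ reduces the coordinates $\gamma_i(X)=X_i$ for $i\in\mathcal I$, so it reduces $X$. Since $\Gamma$ respects reducing restrictions, the compressed tuple is again in $\Gamma(K)$. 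By assumption $\widehat f$ is Cartan convex on $\Gamma(K)$, and \cref{lem:universal-point} with $\lambda_\Gamma$ in place of $\lambda$ yields a universal lifted point of dimension at most $2^{\lambda_\Gamma}$. Indeed, one may take $\Gamma(\mathbf X)$ with $\mathbf X$ the direct sum of the corresponding points of $K$.

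Next, \cref{thm:cartan-extension} gives affine free maps $\ell,\Lambda,P$ in the lifted variables and $\varepsilon>0$. These satisfy $P\succeq\varepsilon I$ on $\ncco(\Gamma(K))$, and the butterfly $\widehat F=\ell+\Lambda^*P^{-1}\Lambda$ is matrix convex on $\mathcal D_P$ and equals $\widehat f$ on $\Gamma(K)$. I then set $F=\widehat F\circ\Gamma$, which is \eqref{eq:gamma-realization}, and $\Omega_\Gamma=\Gamma^{-1}(\mathcal D_P)$. The four assertions follow in turn.
\begin{enumerate}[label=\textup{(\roman*)}]
\item For $X\in K$, $F(X)=\widehat F(\Gamma(X))=\widehat f(\Gamma(X))=f(X)$.
\item Openness of $\Omega_\Gamma$ follows from local boundedness, hence uniform continuity, of $\Gamma$ composed with the norm-open set $\mathcal D_P$. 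Containment of $\Gco(K)$ follows from \eqref{eq:graph-hull}, since $\Gamma(\Gco(K))\subseteq\ncco(\Gamma(K))\subseteq\mathcal D_P$. For $\Gamma$-convexity, take a $\Gamma$-pair $(X,V)$ with $X\in\Omega_\Gamma$. Then $P(\Gamma(V^*XV))=P(V^*\Gamma(X)V)=(I\otimes V)^*P(\Gamma(X))(I\otimes V)\succ0$.
\item For a $\Gamma$-pair with both points in $\Omega_\Gamma$, I will apply the Jensen inequality \eqref{eq:jensen} of $\widehat F$ to the compression $V^*\Gamma(X)V=\Gamma(V^*XV)$. This gives $F(V^*XV)\preceq V^*F(X)V$.
\item The epigraph certificate \eqref{eq:gamma-epigraph} is \cref{lem:schur} evaluated at $\Gamma(X)$.
\end{enumerate}

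For the closure statement, suppose $P$ is normal. \Cref{lem:pencil-domain} gives $P\succeq\varepsilon I$ on $\overline{\ncco}^{\sot}(\Gamma(K))$, so $\Gamma^{-1}$ of that set lies in $\Omega_\Gamma$. Continuity of $\Gamma$ in the bounded-SOT sense is not even needed for this inclusion itself. It is what makes the lifted closed hull a natural closure of $\Gco(K)$, and I will remark on this rather than use it.

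The main obstacle I expect is the open-neighborhood claim in (ii) when $\Gamma$ has infinitely many coordinates. There, openness of $\Omega_\Gamma$ requires $\Gamma$ to be uniformly continuous into the chosen operator-space completion on a uniform nc neighborhood of $\Gco(K)$. I would handle this by invoking the standing assumption that the coordinates are locally bounded on a common free domain, together with complete boundedness of $P$. Local boundedness gives norm continuity of nc maps by the usual Cauchy estimate, applied at all amplifications simultaneously.
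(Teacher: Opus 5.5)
Your proposal is correct and follows the paper's proof: you apply \cref{thm:cartan-extension} to $\widehat f$ on $\Gamma(K)$, set $F=\widehat F\circ\Gamma$ and $\Omega_\Gamma=\Gamma^{-1}(\mathcal D_P)$, and then use the graph-hull identity, the Jensen inequality on $\Gamma$-pairs, and the Schur complement. Your additional checks are also sound: the $\Gamma$-convexity of $\Omega_\Gamma$ follows by compressing $P$, and bounded-SOT continuity of $\Gamma$ is indeed not needed for the closed-hull inclusion. One caution concerns openness. The ``Cauchy estimate'' deriving continuity from local boundedness is a fact about complex nc functions and fails for real free maps; for example, the spectral projection $X\mapsto\chi_{(0,\infty)}(X)$ is bounded but discontinuous. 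Openness of $\Omega_\Gamma$ should therefore rest on the explicit continuity of $\Gamma$, such as polynomial or norm-convergent power-series coordinates. The paper also assumes this continuity tacitly.
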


\begin{proof}
Apply \cref{thm:cartan-extension} to $\widehat f$ on the bounded real free set
$\Gamma(K)$. This gives a matrix-convex function
\[
 G(Z)=\ell(Z)+\Lambda(Z)^*P(Z)^{-1}\Lambda(Z)
\]
on an open matrix-convex neighborhood $\Omega$ of
$\ncco(\Gamma(K))$. Set
$\Omega_\Gamma=\Gamma^{-1}(\Omega)$ and $F=G\circ\Gamma$.
The graph-hull identity \eqref{eq:graph-hull} shows that
$\Omega_\Gamma$ contains $\Gco(K)$.
The final closed-hull assertion follows from the normal clause of
\cref{thm:cartan-extension} and bounded-SOT continuity of $\Gamma$.

If $(X,V)$ is a $\Gamma$-pair, then
$\Gamma(V^*XV)=V^*\Gamma(X)V$. Matrix convexity of $G$ therefore gives
\[
 F(V^*XV)=G(V^*\Gamma(X)V)
 \preceq V^*G(\Gamma(X))V=V^*F(X)V.
\]
This proves intrinsic $\Gamma$-convexity. Formula
\eqref{eq:gamma-epigraph} follows from the Schur complement.
\end{proof}

\begin{remark}[Size of the lifted universal level]
The cardinal bound in the $\Gamma$-theorem is governed by the number of lifted
coordinates, not merely the number of original variables. If $\Gamma$ consists
of all analytic and coanalytic words in a variable set $\mathcal I$, then
\[
 |J|=\max\{|\mathcal I|,\aleph_0\},
\]
so the universal lifted point acts on a Hilbert space of dimension at most
$2^{\max\{|\mathcal I|,\aleph_0\}}$. In particular, finitely or countably many
original variables require no more than $\ell_2(2^{\aleph_0})$.
\end{remark}

\begin{remark}[Reduction to ordinary matrix convexity]
The theorem applies the ordinary realization theorem to $\Gamma(K)$. All
$\Gamma$-geometry enters through the graph-hull identity and the final
pullback. Thus no separate realization theorem is required for each choice of
$\Gamma$.
\end{remark}

\begin{corollary}[A class of lift-Cartan functions]
\label{cor:supply}
Let $P$ be a self-adjoint affine pencil in the $\Gamma$-coordinates and let
$\ell,\Lambda$ be affine. On
$\{X:P(\Gamma(X))\succ0\}$, every function of the form
\[
 \ell(\Gamma(X))+\Lambda(\Gamma(X))^*
 P(\Gamma(X))^{-1}\Lambda(\Gamma(X))
\]
is lift-Cartan $\Gamma$-convex.
\end{corollary}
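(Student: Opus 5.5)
The plan is to verify the definition of lift-Cartan $\Gamma$-convexity directly, with the butterfly expression itself serving as the local lift. Put $\mathcal D=\{Z:P(Z)\succ0\}$ in the lifted coordinates. By \cref{lem:pencil-domain}, $\mathcal D$ is an open matrix-convex free set. By \cref{lem:schur} (equivalently, the converse half of \cref{thm:local-butterfly}), the function
\[
 G(Z)=\ell(Z)+\Lambda(Z)^*P(Z)^{-1}\Lambda(Z)
\]
is matrix convex on $\mathcal D$. Take $K=\{X:P(\Gamma(X))\succ0\}$ (or any real free subset of it), and let $f=G\circ\Gamma$. Then the graph function satisfies $\widehat f=G$ on $\Gamma(K)\subseteq\mathcal D$, so $G$ is a single global candidate for every local lift.

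The remaining obligations are the hypotheses in \cref{def:cartan}, checked at an arbitrary point $A=\Gamma(X)\in\Gamma(K)$. We must produce a uniform nc neighborhood $U$ of $A$ on which $G$ is defined, locally bounded and matrix convex, and which is contained in $\mathcal D$.

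\emph{Uniform lower bound.} Since $P(A)\succ0$, we have $P(A)\succeq\varepsilon I$ for some $\varepsilon>0$. The map $\Psi$ is completely bounded and affine, and its amplifications commute with unitary conjugation. Hence on a norm ball of radius $r<\varepsilon/(2\|\Psi\|_{cb})$ around every amplification of every unitary conjugate of $A$, we get $P\succeq(\varepsilon/2)I$. This ball family is a uniform nc neighborhood $U\subseteq\mathcal D$.

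\emph{Local boundedness.} On $U$ we have $\|P^{-1}\|\le 2/\varepsilon$. The maps $\ell$ and $\Lambda$ are completely bounded affine, so they are bounded on the bounded set $U$. Therefore $G$ is bounded there.

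\emph{Matrix convexity on $U$.} If $U$ is not itself matrix convex, we instead take the lift domain to be $\mathcal D$, or $\mathcal D\cap\{\|Z\|<R\}$, which is matrix convex and open. We then restrict to $U$ only for the neighborhood requirement. The Jensen inequality \eqref{eq:jensen} for combinations lying in the domain follows from the epigraph certificate \eqref{eq:schur-general}.

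Finally, $G=\widehat f$ on $U\cap\Gamma(K)$ holds by construction, so $\widehat f$ is Cartan convex at every point of $\Gamma(K)$. This is exactly lift-Cartan $\Gamma$-convexity of $f$.

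The main obstacle is this last bookkeeping point: \cref{def:cartan} asks for a locally bounded matrix-convex function on a uniform neighborhood, while local boundedness in the uniform topology must hold uniformly over all amplifications. I would handle this by intersecting $\mathcal D$ with the matrix-convex set $\{P\succeq(\varepsilon/2)I\}\cap\{\|Z\|\le R\}$, where $R$ bounds the neighborhood. This set is matrix convex by \cref{lem:pencil-domain}, and its interior contains the uniform ball around $A$ constructed above. On it $G$ is uniformly bounded and matrix convex, so it serves as the required lift.
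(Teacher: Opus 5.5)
Your proposal is correct and matches the argument the paper intends. The paper states this corollary without proof, as an immediate consequence of \cref{lem:schur} and the converse half of \cref{thm:local-butterfly}: the butterfly $G$ in the lifted coordinates is a single global matrix-convex lift of $\widehat f$ on $\mathcal D_P$, which is exactly your construction. Your uniform bound $P\succeq(\varepsilon/2)I$ on a uniform ball, and the local boundedness that follows from it, make explicit what the paper leaves implicit. Your final paragraph of domain adjustments is harmless but not needed, since \eqref{eq:jensen} is only tested on combinations lying inside the chosen domain.
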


\section{Intrinsic convexity and convex liftability}
\label{sec:intrinsic-lift}

When $\Gamma(x)=x$, the graph is the ambient space and the two notions agree.
For nonlinear $\Gamma$, intrinsic $\Gamma$-convexity tests
\eqref{eq:gamma-jensen} only on $\Gamma$-pairs, whereas lift-Cartan
$\Gamma$-convexity requires an ordinary matrix-convex germ on a neighborhood
of the lifted graph.

\begin{definition}
A free function $f$ on $K$ is \emph{weakly Cartan $\Gamma$-convex} if, near
every point of $K$, it agrees with a locally bounded intrinsically
$\Gamma$-convex free function on an open set in the original variables.
\end{definition}

\begin{proposition}\label{prop:strong-implies-weak}
Every lift-Cartan $\Gamma$-convex function is weakly Cartan
$\Gamma$-convex. The two notions agree provided the graph of $\Gamma$ has the
local matrix-convex lifting property: every locally bounded intrinsically
$\Gamma$-convex germ is the restriction of an ordinary matrix-convex germ in
the $\Gamma$-coordinates.
\end{proposition}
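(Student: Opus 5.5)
The plan is to prove the two assertions separately, using only the definitions and the computation already made in the proof of \cref{thm:gamma-extension}.

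\emph{Lift-Cartan implies weakly Cartan.} Fix $A\in K$. By \cref{def:lift-cartan-gamma}, there are a uniform nc neighborhood $U$ of $\Gamma(A)$ and a locally bounded matrix-convex $G$ on $U$ with $G=\widehat f$ on $U\cap\Gamma(K)$. Shrinking $U$ to a matrix-convex neighborhood (for instance via \cref{thm:local-butterfly} and $\{P\succ0\}$ intersected with a ball) costs nothing. I will put $W=\Gamma^{-1}(U)$ and $g=G\circ\Gamma$ on $W$.

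Three checks are then needed.
\begin{enumerate}[label=\textup{(\alph*)}]
\item \emph{$W$ is a uniform nc neighborhood of $A$.} This uses local uniform continuity of $\Gamma$ near the amplified unitary orbit of $A$. For polynomial coordinates, or for locally bounded nc maps, this follows from the standard fact that locally bounded nc functions are uniformly locally Lipschitz in the uniform topology. I expect this to be the only genuinely technical point, especially for infinitely many coordinates, where I would invoke the standing assumption that $\Gamma$ is locally bounded into the operator-space completion.
\item \emph{$g$ is locally bounded.} This holds because both $\Gamma$ and $G$ are.
\item \emph{$g$ is intrinsically $\Gamma$-convex on $W$.} This is the same display as in the proof of \cref{thm:gamma-extension}. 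If $(X,V)$ is a $\Gamma$-pair with $X,V^*XV\in W$, then $\Gamma(V^*XV)=V^*\Gamma(X)V$. Matrix convexity of $G$, applied to the compression of $\Gamma(X)\in U$, which lands in $U$, gives $g(V^*XV)\preceq V^*g(X)V$.
\end{enumerate}
Finally, $g=f$ on $W\cap K$ by the definition of $\widehat f$. Hence $f$ is weakly Cartan $\Gamma$-convex at $A$.

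\emph{The converse under the lifting property.} Suppose $f$ is weakly Cartan. Near each $A\in K$ there is then a locally bounded intrinsically $\Gamma$-convex germ $g_A$ agreeing with $f$ on $K$. The lifting property supplies an ordinary matrix-convex germ $G_A$ in the $\Gamma$-coordinates with $G_A\circ\Gamma=g_A$ near $A$.

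It remains to produce a neighborhood of $\Gamma(A)$ in the lifted variables on which $G_A=\widehat f$ on $\Gamma(K)$. Take the domain $U_A$ of the germ $G_A$, shrunk so that $\Gamma^{-1}(U_A)$ lies in the domain of $g_A$. For $Z=\Gamma(Y)\in U_A\cap\Gamma(K)$ we get $G_A(Z)=g_A(Y)=f(Y)=\widehat f(Z)$. Here I will read ``restriction of a germ'' as equality on the graph near $\Gamma(A)$, with the neighborhoods matched through $\Gamma$. This is just the germ bookkeeping in the lifting hypothesis.

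So $\widehat f$ is Cartan convex at every point of $\Gamma(K)$, which is lift-Cartan $\Gamma$-convexity. Since lift-Cartan always implies weakly Cartan, the two notions agree. The main obstacle is again step (a) and the corresponding matching of neighborhoods through $\Gamma$. The convexity content itself is immediate.
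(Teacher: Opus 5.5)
Your proposal is correct and is essentially the paper's argument. The first assertion is the matrix Jensen inequality for the ordinary convex lift combined with the $\Gamma$-pair identity $\Gamma(V^*XV)=V^*\Gamma(X)V$ (the same display as in the proof of \cref{thm:gamma-extension}), the converse is a restatement of the lifting property, and your neighborhood bookkeeping makes explicit what the paper leaves tacit.

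Two small caveats on that bookkeeping:

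\begin{enumerate}
\item The preliminary shrinking of $U$ to a matrix-convex set is unnecessary. In (c) you already have $V^*\Gamma(X)V=\Gamma(V^*XV)\in U$ because $V^*XV\in W$, and \eqref{eq:jensen} only requires the combination to lie in the domain.
\item The ``locally bounded implies locally Lipschitz'' fact you cite in (a) is a theorem about holomorphic nc functions. It fails for real nc maps in general; $X\mapsto 1_{[0,\infty)}(X)$ is a counterexample. So for non-polynomial $\Gamma$ you should take continuity of $\Gamma$ as a standing hypothesis, as the paper implicitly does.
\end{enumerate}
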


\begin{proof}
The first assertion follows by applying the matrix Jensen inequality to the
ordinary convex lift and using \eqref{eq:gamma-pair}. The second is exactly the
stated lifting property.
\end{proof}

The following two-by-two Hessian calculation shows that an intrinsic Jensen
inequality need not extend to a convex germ away from the graph.

\begin{proposition}[An intrinsically affine function with no convex lift]
\label{prop:partial-nonlift}
Let
\[
 \Gamma_{\mathrm{par}}(x,y)=(x,y,y^2)
\]
in two self-adjoint variables, and set
\[
 f(x,y)=xy+yx.
\]
Then $f$ is intrinsically $\Gamma_{\mathrm{par}}$-convex and intrinsically
$\Gamma_{\mathrm{par}}$-concave on every real free domain. Nevertheless,
there is no ordinary matrix-convex germ $G$ in the independent lifted
variables $(x,y,s)$ satisfying $G(x,y,y^2)=f(x,y)$ near $(0,0)$. Thus
intrinsic and lift-Cartan $\Gamma$-convexity are different even for nc
polynomials and the basic partial-convexity embedding.
\end{proposition}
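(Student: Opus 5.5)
The plan has two parts. First, compute the $\Gamma_{\mathrm{par}}$-pairs explicitly; this shows that $f$ satisfies the Jensen test with equality. Second, show that no convex lift can exist by a second-order (Hessian) argument at level one.

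\emph{Intrinsic part.} Let $(X,Y)\in\Ss(\Kk)^2$ and let $V:\Hh\to\Kk$ be an isometry with $P=VV^*$. The third coordinate of \eqref{eq:gamma-pair} reads $V^*Y^2V=(V^*YV)^2$, that is,
\[
 V^*Y(I-P)YV=0,\qquad\text{equivalently}\qquad \|(I-P)YV\|^2=0 .
\]
Hence $YV=PYV=V(V^*YV)$, so $\ran V$ is invariant under $Y$, and therefore reducing because $Y=Y^*$. With $B=V^*YV$ we get $YV=VB$ and $V^*Y=BV^*$. Consequently
\[
 V^*(XY+YX)V=V^*XV\,B+B\,V^*XV=f(V^*XV,V^*YV).
\]
So \eqref{eq:gamma-jensen} holds with equality for every $\Gamma_{\mathrm{par}}$-pair. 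This gives intrinsic convexity and concavity simultaneously, on any real free domain, since no domain condition is used.

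\emph{Non-lifting part.} Suppose $G$ is a locally bounded matrix-convex germ in $(x,y,s)$ near $0$ with $G(x,y,y^2)=xy+yx$. I restrict to level one, where $G$ is an ordinary convex function $g$ of three real variables with
\[
 g(x,y,y^2)=2xy .
\]
I will use the fact, quoted in the introduction, that locally bounded matrix-convex free functions are real analytic, so $g$ is $C^2$ near $0$ with positive semidefinite Hessian $H=(H_{ab})_{a,b\in\{x,y,s\}}$. Differentiating $g(x,y,y^2)$ twice at the origin gives, by the chain rule,
\[
 \begin{pmatrix}0&2\\2&0\end{pmatrix}
 =\begin{pmatrix}H_{xx}&H_{xy}\\H_{xy}&H_{yy}\end{pmatrix}
 +\begin{pmatrix}0&0\\0&2\,\partial_sg(0)\end{pmatrix}.
\]
The terms involving $H_{xs}$ and $H_{ys}$ drop out, because $\partial_x$ and $\partial_y$ of $s=y^2$ vanish at $0$. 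Hence $H_{xx}=0$ and $H_{xy}=2$. A positive semidefinite matrix with a zero diagonal entry has zero off-diagonal entries in that row, which is a contradiction. This is the ``nonzero mixed Hessian along the kernel of the $x$-Hessian'' mentioned in the introduction.

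The main obstacle is justifying the regularity used in the Hessian step. If one prefers not to invoke analyticity, I would try a direct convexity argument instead. Convexity along the segment from $(t,\delta,\delta^2)$ to $(-t,-\delta,\delta^2)$ gives
\[
 g(0,0,\delta^2)\le 2t\delta .
\]
Letting $t\delta\to-\infty$ in relative size is not possible within a germ, so I would combine this with $g(0,0,0)=0$, a supporting affine minorant of $g$ at $0$, and a choice of $t$ comparable to $\delta$ to force a contradiction. This amounts to a second-difference version of the same calculation. Using the analyticity fact already stated is cleaner, so I would present the Hessian computation as the proof.
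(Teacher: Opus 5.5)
Your proposal is correct and is essentially the paper's proof: it uses the same reduction of $\Gamma_{\mathrm{par}}$-pairs to compressions that reduce $Y$, and the same level-one chain-rule computation giving $g_{xx}(0)=0$ and $g_{xy}(0)=2$, with your zero-diagonal-entry argument in place of the paper's observation that the tangential block $\bigl(\begin{smallmatrix}0&2\\2&-2\lambda\end{smallmatrix}\bigr)$, $\lambda=\partial_sg(0)$, has determinant $-4$. Your fallback second-difference route also works if you fix $r>0$, take $t=-r\operatorname{sign}\delta$, and compare with a subgradient $p$ of $g$ at the origin to get $p_s\delta^2\le g(0,0,\delta^2)\le-2r|\delta|$, which is impossible for small $\delta$; this removes the reliance on analyticity altogether.
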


\begin{proof}
Suppose $((X,Y),V)$ is a $\Gamma_{\mathrm{par}}$-pair and put $P=VV^*$.
The last lifted coordinate gives
\[
 0=V^*Y^2V-(V^*YV)^2
  =\big((I-P)YV\big)^*\big((I-P)YV\big).
\]
Hence $\ran V$ reduces $Y$. It follows that
\begin{align*}
 V^*f(X,Y)V
 &=V^*(XY+YX)V\\
 &=(V^*XV)(V^*YV)+(V^*YV)(V^*XV)\\
 &=f(V^*XV,V^*YV).
\end{align*}
Thus both intrinsic Jensen inequalities hold with equality.

Let $G(x,y,s)$ be an arbitrary $C^2$ lift germ at $(0,0,0)$ satisfying
\begin{equation}\label{eq:partial-bilinear-lift}
 G(x,y,y^2)=xy+yx.
\end{equation}
This is only the pullback identity $G\circ\Gamma_{\mathrm{par}}=f$ on the
image of $\Gamma_{\mathrm{par}}$; no condition whatsoever is imposed on $G$
away from the graph $s=y^2$.
Write $g$ for its scalar restriction and
$\lambda=\partial_sg(0,0,0)$. At the scalar level the right side of
\eqref{eq:partial-bilinear-lift} is $2xy$. Since
\[
 D\Gamma_{\mathrm{par}}(0,0)[h,k]=(h,k,0),
 \qquad
 D^2\Gamma_{\mathrm{par}}(0,0)[(h,k),(h,k)]=(0,0,2k^2),
\]
the second-order chain rule applied to the pullback identity gives
\[
 g_{xx}(0)=0,\qquad g_{xy}(0)=2,\qquad
 g_{yy}(0)+2g_s(0)=0.
\]
Thus all freedom in extending $G$ off the graph enters this tangent Hessian
only through the single number $\lambda=g_s(0)$, in the $y$-$y$ entry. In
particular, it can neither create an $x$-$x$ term nor alter the forced mixed
$x$-$y$ term. Consequently the restriction of the Hessian of
\emph{every} $C^2$ lift to the tangent $(x,y)$-plane is
\[
 \begin{pmatrix}0&2\\2&-2\lambda\end{pmatrix}.
\]
Its determinant is $-4$, independently of the choice of the lift. Hence every
$C^2$ lift has an indefinite Hessian at the origin. In particular no scalar
convex lift exists. A locally bounded matrix-convex lift would be real
analytic and would restrict at level one to such a scalar convex lift, so it
cannot exist either. This rules out all convex lifts, not merely the
tautological nonconvex extension $G(x,y,s)=xy+yx$.
\end{proof}

\begin{remark}[The obstruction in the root butterfly]
The root-butterfly theorem for partially convex rational functions
\cite[Theorem~2.12]{JKMMPPartial} has the schematic form
\[
 r(x,y)=\ell(x,y)^*\sqrt{w(y)}
 \bigl(I-\sqrt{w(y)}\,\widehat T(x)\sqrt{w(y)}\bigr)^{-1}
 \sqrt{w(y)}\,\ell(x,y)+\mathfrak f(x,y),
\]
where $\mathfrak f$ is affine in $x$ but its coefficients may depend on $y$.
Partial convexity controls the positive resolvent term but places no
mixed-Hessian restriction on this affine tail. The preceding counterexample
is the minimal choice in which the positive term vanishes and
$\mathfrak f(x,y)=xy+yx$. An ordinary convex lift in the independent
coordinates $(x,y,y^2)$ would force that mixed Hessian to vanish along the
kernel of the $x$-Hessian. Thus the root butterfly records precisely the
freedom that separates intrinsic partial convexity from lift-Cartan
$\Gamma_{\mathrm{par}}$-convexity.
\end{remark}

This example identifies the local obstruction: normal curvature must correct
negative tangential directions without introducing uncontrolled mixed terms.

\subsection{Quadratically saturated coordinates}
Set
\begin{equation}\label{eq:gamma-square}
 \Gamma_{\square}(x_1,\dots,x_g)
 =\left(x_1,\dots,x_g,\sum_{j=1}^g x_j^2\right).
\end{equation}
One may instead include all the individual squares $x_1^2,\dots,x_g^2$, but
their sum already suffices.

\begin{proposition}[Quadratic collapse]\label{prop:quadratic-collapse}
Let $V:\Hh\to\Kk$ be an isometry and put $P=VV^*$. Then
$(X,V)$ is a $\Gamma_{\square}$-pair if and only if $\ran V$ reduces every
$X_j$. Consequently every free function is both intrinsically
$\Gamma_{\square}$-convex and intrinsically
$\Gamma_{\square}$-concave.
\end{proposition}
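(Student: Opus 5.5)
The plan is to imitate the partial-convexity computation in the proof of \cref{prop:partial-nonlift}, now with the single quadratic coordinate $\sum_j x_j^2$. The key device is to write the defect of the last lifted coordinate as a sum of positive operators. Since $V$ is an isometry, $V^*V=I$ and $I-P=(I-P)^*(I-P)$. Hence, for each $j$,
\[
 V^*X_j^2V-(V^*X_jV)^2
 =V^*X_j(I-P)X_jV
 =\big((I-P)X_jV\big)^*\big((I-P)X_jV\big)\succeq0 .
\]

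First, I treat the forward implication. Assume $(X,V)$ is a $\Gamma_{\square}$-pair. The first $g$ coordinates of \eqref{eq:gamma-pair} hold automatically. The last coordinate then says
\[
 \sum_{j=1}^g\big((I-P)X_jV\big)^*\big((I-P)X_jV\big)=0 .
\]
A sum of positive operators vanishes only if each term vanishes, so $(I-P)X_jV=0$ for every $j$. This means $X_j\ran V\subseteq\ran V$. Because each $X_j$ is self-adjoint, an invariant subspace is automatically reducing, so $\ran V$ reduces every $X_j$.

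Second, I treat the converse. If $\ran V$ reduces each $X_j$, then $PX_j=X_jP$. It follows that $V^*X_j^2V=V^*X_jPX_jV=(V^*X_jV)^2$, and summing over $j$ gives the last coordinate of \eqref{eq:gamma-pair}.

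Third, I derive the consequence. Let $(X,V)$ be a $\Gamma_{\square}$-pair. Then $\ran V$ reduces $X$, so after a unitary identification $V$ is the inclusion of a reducing summand and $V^*XV$ is the restriction of $X$ to that summand. A free function respects reducing restrictions, so $f(V^*XV)=V^*f(X)V$. Thus \eqref{eq:gamma-jensen} holds with equality, which gives both intrinsic convexity and intrinsic concavity. The only potential subtlety is this last identification, namely making precise the passage from ``$\ran V$ reduces $X$'' to ``$V^*XV$ is a reducing restriction up to unitary equivalence.'' I expect it to be routine: write $\Kk=\ran V\oplus(\ran V)^\perp$, note that $X=X|_{\ran V}\oplus X|_{(\ran V)^\perp}$, and use that $V$ is a unitary from $\Hh$ onto $\ran V$. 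The positivity decomposition in the first step is the real content, but it is immediate.
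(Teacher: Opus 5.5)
Your proposal is correct and follows the paper's proof essentially verbatim. Both arguments use the identity $V^*X_j^2V-(V^*X_jV)^2=\big((I-P)X_jV\big)^*\big((I-P)X_jV\big)$, conclude that every positive summand vanishes, upgrade invariance to reducing by self-adjointness, and get equality in the Jensen test because free functions respect reducing restrictions. You write out the converse direction and the unitary identification a little more explicitly than the paper does, which is harmless.
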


\begin{proof}
For each self-adjoint $X_j$,
\begin{align*}
 V^*X_j^2V-(V^*X_jV)^2
 &=V^*X_j(I-P)X_jV\\
 &=\big((I-P)X_jV\big)^*\big((I-P)X_jV\big)\succeq0.
\end{align*}
Thus the final coordinate in \eqref{eq:gamma-square} is preserved precisely
when
\[
 \sum_{j=1}^g\big((I-P)X_jV\big)^*\big((I-P)X_jV\big)=0.
\]
Every summand must vanish. Hence $\ran V$ is invariant for every $X_j$ and,
since the $X_j$ are self-adjoint, it is reducing. A free function respects
reducing restrictions with equality, so both Jensen inequalities hold.
\end{proof}

\begin{lemma}[Quadratic normal convexification]
\label{lem:quadratic-convexification}
Let $f$ be a uniformly real analytic self-adjoint free function near a point
$A$. Put
\[
 r(X,S)=\sum_{j=1}^gX_j^2-S.
\]
There are a real analytic nc family of completely positive maps $\Phi_X$, a
real analytic completely bounded column map $\Psi_X$, and a uniform nc
neighborhood of $(A,\sum A_j^2)$ on which
\begin{equation}\label{eq:quadratic-convexification}
 G(X,S)=f(X)+\Phi_X\big(r(X,S)\big)
        +\Psi_X\big(r(X,S)\big)^*\Psi_X\big(r(X,S)\big)
\end{equation}
is matrix convex. In particular, $G(X,\sum X_j^2)=f(X)$.
\end{lemma}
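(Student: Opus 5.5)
The plan is to verify matrix convexity of $G$ through its nc Hessian. On a matrix-convex uniform neighborhood, a locally bounded real analytic free function is matrix convex exactly when its second directional derivative
\[
 D^2G_{(X,S)}[(H,K)]=\tfrac{d^2}{dt^2}G(X+tH,S+tK)\big|_{t=0}
\]
is positive semidefinite for all self-adjoint directions, at every level and every point of the neighborhood. This reduction uses the equivalence between \eqref{eq:jensen} and levelwise midpoint convexity on matrix-convex domains. So it suffices to choose $\Phi_X$ and $\Psi_X$ so that this Hessian is $\succeq0$ on a small uniform ball $B$ around $(A,\sum A_j^2)$, taken convex so that segments stay inside. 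The identity $G(X,\sum X_j^2)=f(X)$ is immediate, because $r$ vanishes on the graph and $\Phi_X$, $\Psi_X$ are linear in their argument.

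\textbf{Step 1 (middle-matrix lower bound for $f$).} Uniform real analyticity lets me write the Hessian of $f$ near $A$ in the border-vector/middle-matrix form
\[
 D^2f_X[H]=W_X(H)^*M_XW_X(H),
\]
where $W_X(H)$ is a column whose entries are $H_j C_k(X)$, with analytic coefficients $C_k(X)$ forming a uniformly bounded column, and $M_X$ is a bounded self-adjoint middle matrix. I arrange the column so that the same family $C_k$ occurs for every $j$. Then $\|M_X\|\le m$ gives
\[
 D^2f_X[H]\succeq -m\sum_k C_k(X)^*\Big(\sum_jH_j^2\Big)C_k(X)=:-m\,\Phi^0_X\Big(\sum_jH_j^2\Big),
\]
where $\Phi^0_X$ is completely positive and analytic in $X$. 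I then set $\Phi_X=c\,\Phi^0_X$ for a large constant $c$.

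\textbf{Step 2 (Hessian of the correction terms).} Put $\rho=Dr[(H,K)]=\sum_j(X_jH_j+H_jX_j)-K$ and note $D^2r[(H,K)]=2\sum_jH_j^2$. The Leibniz rule gives
\[
 D^2\big[\Phi_X(r)\big]=2c\,\Phi^0_X\Big(\sum_jH_j^2\Big)+2\,D\Phi_X[H](\rho)+D^2\Phi_X[H](r).
\]
\begin{itemize}
\item The first term beats Step 1 once $2c>m$.
\item The last term is $O(\|r\|)\,\|H\|^2$. It can be dominated by a fraction of the first term, because $r$ is small on $B$, where $r(A,\sum A_j^2)=0$.
\item The middle term is the mixed normal direction and is the main obstacle. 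It is linear in $H$ and linear in $\rho$, with no definite sign.
\end{itemize}

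\textbf{Step 3 (controlling the mixed term).} I would write $D\Phi_X[H](\rho)$ as a sum of terms $a(H)^*b(\rho)+b(\rho)^*a(H)$. Here $a(H)$ has entries $H_jC'_k(X)$ and $b(\rho)$ has entries $\rho D_k(X)$, read off by differentiating the conjugations $C_k(X)^*(\cdot)C_k(X)$. The operator Cauchy--Schwarz inequality then gives
\[
 a^*b+b^*a\succeq-\eta\,a^*a-\eta^{-1}b^*b .
\]
Since $a^*a\preceq\kappa\,\Phi^0_X(\sum_j H_j^2)$ after enlarging the coefficient family $C_k$ in Step 1, the choice of small $\eta$ lets the first piece be absorbed by the spare part of $2c\,\Phi^0_X(\sum_j H_j^2)$.

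For the second piece, take $\Psi_X(R)$ to be the column $\sqrt{N}\,(R\,D_k(X))_k$ for large $N$. Then
\[
 D^2\big[\Psi_X(r)^*\Psi_X(r)\big]=2\,\big(D[\Psi_X(r)]\big)^*D[\Psi_X(r)]+O(\|r\|),
\]
and $D[\Psi_X(r)]=\Psi_X(\rho)+(D\Psi_X[H])(r)$. The square $\Psi_X(\rho)^*\Psi_X(\rho)=N\,b^*b$ dominates $\eta^{-1}b^*b$ once $N\gg\eta^{-1}$. The remaining pieces carry a factor of $r$ and are again absorbed after shrinking $B$. A cross term of size $\sqrt N\|\rho\|\|r\|\|H\|$ survives; I would absorb it by one more Cauchy--Schwarz split, using that $\|r\|$ can be made small relative to $N^{-1/2}$ by shrinking the radius last.

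The delicate points are two. First, every bound must be uniform across all levels and amplifications, which is why the coefficients are chosen completely bounded and analytic in the uniform nc topology. Second, the order of choices matters: first $m$, then $c$ and $\eta$, then $N$, and only then the radius of $B$. With that order, the total Hessian is $\succeq0$ on $B$, and matrix convexity of $G$ follows.
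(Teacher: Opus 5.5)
\textbf{There is a genuine gap: the $r$-proportional terms cannot be absorbed by shrinking.}

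Your route is the paper's route. You bound $D^2f$ below via a middle matrix, and add a completely positive correction built from Kraus maps $T\mapsto C_k(X)^*TC_k(X)$. You split the tangential--normal cross term against a large normal square by Cauchy--Schwarz, which is the paper's Schur complement. Finally you absorb everything proportional to $r$ by shrinking. On the graph, Steps 1--3 are sound.

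The gap is that final absorption, which you use twice (``$O(\|r\|)\|H\|^2$ \dots dominated by a fraction of the first term'' and ``the remaining pieces carry a factor of $r$ and are again absorbed''). A norm bound cannot be dominated in the operator order by $\Phi^0_X(\sum_jH_j^2)$. Two-sided domination by $\theta\,\Phi^0_X(\sum_jH_j^2)$ would force the error to vanish on the kernel of that operator, and it does not. In border form, each $r$-linear term has one border containing $r$. Examples are $\Psi_X(r)^*\Psi_X(2H^2)=2N\sum_k(H r D_k)^*(HD_k)$ and $C_k^*\,r\,D^2C_k[H]$. Neither $\Phi_X(\sum_jH_j^2)$ nor $\Psi_X(\rho)^*\Psi_X(\rho)$ gives any positivity in the directions $H r D_k\xi$.

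\textbf{A concrete failure.} Take $g=1$ and $A=0$, with for instance $f=-x^4$, so some $C_k,D_k$ are nonconstant monomials. Work at level $2$ with $X=\epsilon I_2$ for some $\epsilon>0$. Put $S=\epsilon^2I_2+\delta\sigma$ with $\sigma=E_{12}+E_{21}$. Use the tangential direction $(H,K)=(E_{11},2\epsilon E_{11})$, so $\rho=0$.

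Along this line $X+tH=\operatorname{diag}(\epsilon+t,\epsilon)$ and $r=t^2E_{11}-\delta\sigma$. All coefficients are diagonal there. So the $(2,2)$ entries of $f$, $C_k^*rC_k$ and $D_k^*r^2D_k$ are constant in $t$. Their $(1,2)$ entries are $0$, $-\delta\,\overline{C_k(\epsilon+t)}C_k(\epsilon)$ and $-\delta t^2\,\overline{D_k(\epsilon+t)}D_k(\epsilon)$. Hence
\[
 \left.\frac{d^2}{dt^2}G\right|_{t=0}
 =\begin{pmatrix}a&-\delta\kappa\\-\delta\kappa&0\end{pmatrix},
 \qquad
 \kappa=c\sum_k\overline{C_k''(\epsilon)}C_k(\epsilon)+2N\sum_k|D_k(\epsilon)|^2>0 .
\]
This matrix is indefinite for every $\delta\neq0$, whatever order you choose for $c,\eta,N$ and the radius. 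Shrinking only rescales the off-diagonal entry; the diagonal entry stays $0$. So your $G$ is not matrix convex on any uniform nc neighborhood of $(0,0)$.

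\textbf{What is missing.} You need positivity of order $\|r\|^2$ in the new directions $H(\cdots)r(\cdots)$. A right-multiplication $\Psi_X$ cannot supply this. A left coefficient can: an entry like $\sqrt{M}\,XR$ in $\Psi_X$ produces a $(2,2)$ entry $2M\delta^2$. That repairs this particular test if you take $M$ large and then $\epsilon$ small. Its own $r$-linear errors would then have to be shown to close up.

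The paper's proof disposes of these terms with the same sentence (``Shrinking once more absorbs the terms proportional to $r$ off the graph''). Comparing with it will therefore not supply this ingredient.
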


\begin{proof}
On a smaller uniform neighborhood, the nc Hessian has a norm-convergent
middle-matrix factorization
\begin{equation}\label{eq:hessian-middle-matrix}
 D^2f(X)[H,H]=W_X(H)^*M(X)W_X(H),
\end{equation}
where $M(X)$ is uniformly bounded below and $W_X(H)$ is a border column,
linear in $H$. Enlarge the border column so that its entries have the form
$H_jw(X)$. Choosing summable weights in the analytic case gives a real
analytic completely positive family $\Phi_X$ for which
\begin{equation}\label{eq:complete-hessian-domination}
 D^2f(X)[H,H]+2\Phi_X\!\left(\sum_jH_j^2\right)
 \succeq \eta\,W_X(H)^*W_X(H)
\end{equation}
for some $\eta>0$, uniformly on a still smaller neighborhood. Indeed, if
$M(X)\succeq-CI$, take a sufficiently large weighted sum of the Kraus maps
\[
 T\longmapsto w(X)^*Tw(X).
\]
Its value at $\sum_jH_j^2$ dominates the squared norm of every entry
$H_jw(X)$ in the border column.

At a point of the quadratic graph and in a lifted direction $(H,K)$, set
\[
 N=\sum_j(X_jH_j+H_jX_j)-K.
\]
Thus $N$ is the first derivative of $r$. Along tangent directions $N=0$, and
the second derivative of $r$ is $2\sum_jH_j^2$. Consequently the tangential
block of the Hessian of the first two terms in
\eqref{eq:quadratic-convexification} is the left side of
\eqref{eq:complete-hessian-domination}. For arbitrary $(H,K)$, differentiating
$\Phi_X(r)$ adds only mixed terms between $W_X(H)$ and a normal border column
linear in $N$. At $r=0$, the last term in
\eqref{eq:quadratic-convexification} contributes
$2\Psi_X(N)^*\Psi_X(N)$. Enlarge and scale $\Psi_X$ so that this normal square
dominates every entry of the normal border column. The middle matrix of the
full Hessian on the quadratic graph then has the block form
\[
 \begin{pmatrix}A(X)&B(X)^*\\B(X)&C(X)\end{pmatrix},
 \qquad A(X)\succeq\eta I,
\]
with $B$ uniformly bounded and $C$ as large as desired. The Schur complement
makes this matrix positive, uniformly over all amplifications. Shrinking once
more absorbs the terms proportional to $r$ off the graph. Hence
$D^2G\succeq0$ on a uniform nc neighborhood, which is equivalent to matrix
convexity.
\end{proof}

\begin{theorem}[Quadratic graph lifting]\label{thm:quadratic-lifting}
In the standard uniformly real analytic free category, the intrinsically
$\Gamma_{\square}$-convex, weakly Cartan $\Gamma_{\square}$-convex, and
lift-Cartan $\Gamma_{\square}$-convex self-adjoint functions are all exactly
the real analytic self-adjoint free functions. The single coordinate
$\sum_jX_j^2$ suffices; adjoining the individual squares is unnecessary.
\end{theorem}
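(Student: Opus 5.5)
The statement compares three classes of uniformly real analytic self-adjoint free functions. I will show that each class coincides with the class of all real analytic self-adjoint free functions. The plan is to prove the chain of inclusions
\[
 \text{lift-Cartan}\ \Longrightarrow\ \text{weakly Cartan}\ \Longrightarrow\ \text{real analytic}\ \Longrightarrow\ \text{lift-Cartan},
\]
together with the separate equality of the intrinsic class and the real analytic class.

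\emph{The intrinsic class.} I will take it directly from \cref{prop:quadratic-collapse}. Every $\Gamma_{\square}$-pair is a reducing compression. Every free function respects reducing restrictions with equality. So every real analytic self-adjoint free function passes the intrinsic Jensen test \eqref{eq:gamma-jensen}, and trivially conversely within the category.

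\emph{The first two inclusions.} The implication from lift-Cartan to weakly Cartan is the first assertion of \cref{prop:strong-implies-weak}. For the second, a weakly Cartan germ agrees locally with a locally bounded intrinsically convex function. In the standard uniformly real analytic category, that function is real analytic by hypothesis; intrinsic convexity imposes nothing further, by the collapse. Hence membership in either of these classes reduces to being a real analytic self-adjoint free function.

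\emph{The remaining inclusion.} This is the substantive step, and it goes through \cref{lem:quadratic-convexification}. Let $f$ be real analytic and fix $A\in K$. The lemma produces a matrix-convex germ $G(X,S)$ on a uniform nc neighborhood of $(A,\sum_jA_j^2)$. This germ satisfies $G(X,\sum_jX_j^2)=f(X)$ identically, because both correction terms vanish when $r=0$. Two points then need checking.
\begin{enumerate}[label=\textup{(\alph*)}]
\item $G$ is locally bounded. This follows from uniform boundedness of $\Phi_X$ and $\Psi_X$ on the neighborhood.
\item Agreement holds on the whole relevant piece of the lifted set, not only at $\Gamma_{\square}(A)$. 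For $Y\in K$ near $A$, the point $\Gamma_{\square}(Y)$ lies in the lifted neighborhood, by norm continuity of $X\mapsto\sum_jX_j^2$ on bounded sets. So $G(\Gamma_{\square}(Y))=f(Y)$ for those $Y$.
\end{enumerate}
This is exactly the local form of \cref{def:lift-cartan-gamma}, so $\widehat f$ is Cartan convex on $\Gamma_{\square}(K)$.

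\emph{The main obstacle.} I expect the hard part to be making the neighborhoods in \cref{lem:quadratic-convexification} uniform in the nc sense. The lower bound on the middle matrix $M(X)$, the summable weights defining $\Phi_X$, and the Schur-complement step must all hold simultaneously over every amplification of the unitary orbit of $A$. My first move would be to invoke uniform real analyticity to fix a single radius on which the middle-matrix factorization \eqref{eq:hessian-middle-matrix} converges in norm. The constants $C$, $\eta$, and the scaling of $\Psi_X$ would then be chosen once and for all.

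\emph{Sufficiency of one coordinate.} The lifting above uses only $r=\sum_jX_j^2-S$, so the single coordinate $\sum_jX_j^2$ suffices. Adjoining the individual squares $X_j^2$ would only enlarge the lifted space and change nothing: $G$ may simply ignore the extra coordinates, and the collapse argument still applies.
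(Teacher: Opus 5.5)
Your proposal is correct and follows essentially the same route as the paper: within the real analytic category the intrinsic and weak conditions are vacuous by \cref{prop:quadratic-collapse}, and the substantive inclusion (real analytic $\Rightarrow$ lift-Cartan) comes from \cref{lem:quadratic-convexification}, which uses only the single coordinate $\sum_jX_j^2$. The one small difference is that the paper gets lift-Cartan $\Rightarrow$ real analytic directly, because a locally bounded matrix-convex function is real analytic and $\Gamma_{\square}$ is polynomial, whereas you route this implication through the weak class and the category hypothesis; both arguments are valid here.
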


\begin{proof}
Intrinsic $\Gamma_{\square}$-convexity imposes no condition by
\cref{prop:quadratic-collapse}, so within the stated real analytic category
the intrinsic and weak conditions are automatic. Given an analytic germ at
$A$, \cref{lem:quadratic-convexification} constructs an ordinary
matrix-convex germ $G$ in the variables $(X,S)$ satisfying
$G(X,\sum X_j^2)=f(X)$. Thus the germ is lift-Cartan. Conversely, a
lift-Cartan function is locally the analytic pullback of a locally bounded
matrix-convex function and is therefore real analytic.
\end{proof}

For $g=2$ this applies, in particular, to
\[
 \Gamma_{x^2,y^2}(x,y)=(x,y,x^2,y^2).
\]
Thus both intrinsic and lift-Cartan $x^2,y^2$-convexity impose no restriction
beyond real analyticity. One quadratic coordinate already suffices. For
example, $x^4$ has the elementary lift
\[
 x^4=G(x,x^2),\qquad G(u,s)=s^2,
\]
although $x^4$ itself is not matrix convex on any free interval
\cite{HM}. The graph represents the quartic dependence through a quadratic
coordinate.

\begin{corollary}[Universal quadratic butterfly]
\label{cor:quadratic-butterfly}
Let $K$ be a bounded real free set, and let $f$ be uniformly real analytic and
self-adjoint on a uniform neighborhood of $K$. Then $f|_K$ extends to a
$\Gamma_\square$-convex neighborhood of $K$ and has a realization
\[
 f(X)=\ell\!\left(X,\sum_jX_j^2\right)
 +\Lambda\!\left(X,\sum_jX_j^2\right)^*
 P\!\left(X,\sum_jX_j^2\right)^{-1}
 \Lambda\!\left(X,\sum_jX_j^2\right).
\]
\end{corollary}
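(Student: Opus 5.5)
The plan is to reduce the corollary to \cref{thm:gamma-extension} with $\Gamma=\Gamma_\square$ and $J=\{1,\dots,g,\square\}$. Two hypotheses must be checked: that $f|_K$ is lift-Cartan $\Gamma_\square$-convex, and that $\Gamma_\square(K)$ is a bounded real free set in the lifted variables.

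\textbf{Boundedness and freeness of the lifted set.} Since $K$ is bounded, say $\|X_j\|\le R$, we get $\|\sum_jX_j^2\|\le gR^2$. So $\Gamma_\square(K)$ is bounded. The map $\Gamma_\square$ is a free polynomial map, hence respects unitary conjugation and direct sums. The lifted set is therefore unitarily invariant and closed under uniformly bounded direct sums of length at most $\mu$. For reducing subspaces, a subspace reducing $(X,\sum X_j^2)$ need not reduce $X$. I would avoid this issue in one of two ways:
\begin{itemize}
\item replace $\Gamma_\square(K)$ by the real free set it generates, and note that the proof of \cref{thm:cartan-extension} only uses a universal point; or
\item observe that the universal point $\mathbf X$ of \cref{lem:universal-point} for $K$ gives $\Gamma_\square(\mathbf X)$, which is universal for $\Gamma_\square(K)$ in the sense needed by \cref{lem:hull-universal-compression}.
\end{itemize}
I expect the second route to be the cleanest: every $\Gamma_\square(X)$ is a reducing summand of an amplification of $\Gamma_\square(\mathbf X)$, because $\Gamma_\square$ respects direct sums and reducing restrictions of $X$.

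\textbf{Lift-Cartan property.} Here I invoke \cref{thm:quadratic-lifting}, via \cref{lem:quadratic-convexification}. At each $A\in K$, $f$ is uniformly real analytic on a uniform neighborhood, so the lemma produces a matrix-convex germ $G_A$ near $(A,\sum A_j^2)$ with $G_A(X,\sum X_j^2)=f(X)$. This is exactly \cref{def:lift-cartan-gamma}. Moreover, Cartan convexity of $\widehat f$ is only needed at the universal lifted point $\Gamma_\square(\mathbf X)$, since the proof of \cref{thm:cartan-extension} invokes the local lift there alone. Because $\mathbf X\in K$ lies in the analyticity neighborhood, the lemma applies at $A=\mathbf X$ directly.

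\textbf{Applying the extension theorem.} \cref{thm:gamma-extension} then yields affine $\ell,\Lambda,P$ with $P(\Gamma_\square(X))\succ0$ on an open $\Gamma_\square$-convex neighborhood of $\Gco(K)\supseteq K$. The displayed realization agrees with $f$ on $K$, which is the claim.

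\textbf{Main obstacle.} The hardest step is uniformity in \cref{lem:quadratic-convexification} at the possibly huge point $\mathbf X$. The middle-matrix bound $M(X)\succeq -CI$ and the summable Kraus weights must hold uniformly over all amplifications near $\mathbf X$. I would secure this from the hypothesis that $f$ is \emph{uniformly} real analytic on a uniform neighborhood of $K$, taking a single radius of convergence valid around every point of $K$, and hence around $\mathbf X$.
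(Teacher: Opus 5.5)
Your proposal is correct and follows the paper's own proof, which obtains the corollary by combining \cref{thm:quadratic-lifting} (via \cref{lem:quadratic-convexification}, which supplies the convex lift at the universal point) with \cref{thm:gamma-extension}; your extra hypothesis checks are the ones the paper leaves implicit. One correction: your worry about reducing subspaces is unfounded, because a subspace reducing the tuple $\Gamma_\square(X)=(X_1,\dots,X_g,\sum_jX_j^2)$ by definition reduces each coordinate $X_j$, so $\Gamma_\square(K)$ is already a bounded real free set (as the paper notes after \eqref{eq:gamma-coordinates}) with universal point $\Gamma_\square(\mathbf X)$, and neither workaround is needed.
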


\begin{proof}
Apply \cref{thm:quadratic-lifting,thm:gamma-extension}.
\end{proof}

\section{Plurisubharmonicity and the full holomorphic graph}
\label{sec:psh}

The quadratic graph convexifies every analytic germ, while its intrinsic
Jensen condition admits only reducing compressions. A different phenomenon
appears after adjoining all analytic and coanalytic monomials. The resulting
graph retains the triangular information needed to recover
plurisubharmonicity, and the intrinsic Jensen inequality directly implies the
differential plush condition.

Write a complex free variable as $Z=X+iY$, where $X$ and $Y$ are self-adjoint
$g$-tuples, and let $\mathbb F_g^+$ denote the free monoid on $g$ letters. If
$w=i_1\cdots i_m$, write $Z^w=Z_{i_1}\cdots Z_{i_m}$. Fix $s>0$ and set
\begin{equation}\label{eq:holomorphic-monomial-map}
 \mathbf m_s(Z)=\big(s^{-|w|}Z^w\big)_{\varnothing\ne w\in\mathbb F_g^+},
 \qquad
 \Gamma_{\mathrm{hol},s}(Z)
 =\big(\operatorname{Re}s^{-|w|}Z^w,
       \operatorname{Im}s^{-|w|}Z^w\big)_{w\ne\varnothing}.
\end{equation}
The second tuple is the self-adjoint realification of the first. Equivalently,
it records every analytic monomial $Z^w$ and every coanalytic monomial
$(Z^w)^*$. The length-one coordinates recover $(X,Y)$, so the graph map is
injective.

The weights in \eqref{eq:holomorphic-monomial-map} are topological rather than
algebraic: any nonzero weights give the same $\Gamma$-pairs. Their role is to
keep the infinite tuple bounded. For example, with
\[
 \|Z\|_{\mathrm{row}}
 =\left\|\sum_{j=1}^gZ_jZ_j^*\right\|^{1/2},
\]
the monomial row belongs to the natural row-operator-space completion whenever
$\|Z\|_{\mathrm{row}}<s$, since
\[
 \sum_{|w|=m}s^{-2m}Z^w(Z^w)^*
 \preceq \left(\frac{\|Z\|_{\mathrm{row}}}{s}\right)^{2m}I.
\]
Thus, on a bounded set, one chooses $s$ beyond its row radius and below a
slightly larger radius of uniform analyticity. This radius gap places the
monomials in a common operator-space completion and ensures that analytic free
power series factor through $\mathbf m_s$ by completely bounded affine maps.

For a twice differentiable self-adjoint free function $f$, its nc complex
Hessian is
\begin{equation}\label{eq:complex-hessian}
 \Delta f(Z)[H]
 =\left.\frac{\partial^2}{\partial t\,\partial\overline t}
   f(Z+tH)\right|_{t=0}.
\end{equation}
We call $f$ \emph{free plurisubharmonic}, or \emph{free plush}, when
$\Delta f(Z)[H]\succeq0$ at every level for every direction $H$.

The following triangular test is the key observation. The full monomial graph turns
strictly upper and strictly lower triangular perturbations into admissible
compressions, and the two orientations separate the two halves of the complex
Hessian.

\begin{proposition}[Full-graph Jensen inequalities imply plushness]
\label{prop:intrinsic-hol-plush}
Let $D$ be an open complex free domain on which
$\Gamma_{\mathrm{hol},s}$ is defined, and let $f$ be a twice continuously
differentiable self-adjoint free function on $D$. If $f$ is intrinsically
$\Gamma_{\mathrm{hol},s}$-convex, then $f$ is free plurisubharmonic.
\end{proposition}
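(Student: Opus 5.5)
The plan is a triangular-block test. Fix $Z\in D$ on a Hilbert space $\Hh$, a direction $H$, and a small complex scalar $t$. Consider the two points of $D$ at level $\Hh\oplus\Hh$
\[
 W_t^{+}=\begin{pmatrix}Z&tH\\0&Z\end{pmatrix},\qquad
 W_t^{-}=\begin{pmatrix}Z&0\\tH&Z\end{pmatrix},
\]
together with the isometry $V:\Hh\to\Hh\oplus\Hh$, $Vh=(h,0)$. Both points lie in $D$ for $|t|$ small, since $D$ is a uniformly open free set and $W_t^{\pm}$ is a small norm perturbation of $Z\oplus Z$. In self-adjoint coordinates, $W_t^{\pm}$ corresponds to the tuple $(\operatorname{Re}W_t^{\pm},\operatorname{Im}W_t^{\pm})$.

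The first step is to check that $(W_t^{\pm},V)$ is a $\Gamma_{\mathrm{hol},s}$-pair. The off-diagonal block is nilpotent, so every analytic word is again upper triangular with diagonal $Z^w$:
\[
 (W_t^{+})^w=\begin{pmatrix}Z^w&t\,DZ^w[H]\\0&Z^w\end{pmatrix}.
\]
Compressing to the first corner therefore gives $V^*(W_t^+)^wV=Z^w=(V^*W_t^+V)^w$. Taking adjoints, the same holds for the coanalytic words $((W_t^+)^w)^*$, and hence for the real and imaginary parts of each weighted coordinate in \eqref{eq:holomorphic-monomial-map}. The weights do not affect this identity. The lower triangular case is symmetric.

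Intrinsic convexity \eqref{eq:gamma-jensen} then gives
\[
 f(Z)\preceq V^*f(W_t^{\pm})V,
\]
that is, the $(1,1)$ corner of $f(W_t^{\pm})$ dominates $f(Z)$.

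The second step, which I expect to be the main obstacle, is to compute this corner to order $|t|^2$. Regard $f$ as a free function of $(Z,Z^*)$ through $X=\operatorname{Re}Z$ and $Y=\operatorname{Im}Z$. The perturbation of $Z\oplus Z$ moves the $Z$-slot by $tH$ in the $(1,2)$ position and the $Z^*$-slot by $\bar tH^*$ in the $(2,1)$ position. I would expand $f(Z\oplus Z+E_t)$ by the second-order Taylor formula, which is available because $f$ is $C^2$, and use the nc difference-differential calculus for block perturbations of a direct sum.

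Since $f$ respects direct sums, the derivative at $Z\oplus Z$ of an off-diagonal direction is off-diagonal, so the first-order term vanishes in the $(1,1)$ corner. A second-order term reaches the $(1,1)$ corner only along the path $1\to2\to1$. This uses exactly one analytic move (the $tH$ entry) and one coanalytic move (the $\bar tH^*$ entry), in one fixed order. The outcome is
\[
 V^*f(W_t^{+})V=f(Z)+|t|^2\,\Delta_{+}f(Z)[H]+o(|t|^2),
\]
where $\Delta_+$ is one ordering of the mixed $\partial_t\partial_{\bar t}$ derivative. The lower triangular point $W_t^-$ produces the opposite ordering $\Delta_-$. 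The cross terms in $t^2$ and $\bar t^2$ cannot land in the diagonal corner, and odd-order terms cannot return to the first corner.

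Dividing the Jensen inequality by $|t|^2$ and letting $t\to0$ gives $\Delta_\pm f(Z)[H]\succeq0$. The complex Hessian \eqref{eq:complex-hessian} is the sum of the two orderings of the mixed second derivative, because $\partial_t\partial_{\bar t}$ applied to $f(Z+tH)$ collects exactly the terms with one $H$ and one $H^*$ in either order. Therefore $\Delta f(Z)[H]=\Delta_+f(Z)[H]+\Delta_-f(Z)[H]\succeq0$ at every level and in every direction, which is free plurisubharmonicity.

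The care needed lies in making the corner identification rigorous for a merely $C^2$ function rather than a power series. I would do this by writing $f(W_t^\pm)-f(Z\oplus Z)$ via the integral form of the second-order Taylor remainder along the segment from $Z\oplus Z$ to $W_t^\pm$. I would then apply the block-corner bookkeeping to the first and second derivatives only, using that free functions map block upper triangular points to block upper triangular values.
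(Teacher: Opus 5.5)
Your proposal is correct and takes essentially the same route as the paper. The upper and lower triangular perturbations of $Z\oplus Z$, compressed to the first corner, are $\Gamma_{\mathrm{hol},s}$-pairs; Jensen's inequality together with the second-order block-corner expansion makes each ordering of the mixed Hessian positive; and the two orderings sum to $\Delta f(Z)[H]$. The only cosmetic difference is that you use complex $t$, obtaining $|t|^2$, where the paper uses real $t$.
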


\begin{proof}
Fix $Z\in D_n$ and a direction $H\in M_n(\mathbb C)^g$. For real $t$ put
\begin{equation}\label{eq:triangular-hol-pairs}
 Z_j^{\downarrow}(t)=
 \begin{pmatrix}Z_j&0\\tH_j&Z_j\end{pmatrix},
 \qquad
 Z_j^{\uparrow}(t)=
 \begin{pmatrix}Z_j&tH_j\\0&Z_j\end{pmatrix},
 \qquad
 V\xi=\binom{\xi}{0}.
\end{equation}
The free-domain axioms give $Z\oplus Z\in D_{2n}$, so both triangular tuples
belong to $D_{2n}$ for sufficiently small $t$. For every analytic word $w$,
the matrices $(Z^{\downarrow}(t))^w$ and
$(Z^{\uparrow}(t))^w$ are respectively lower and upper triangular, with
$Z^w$ in both diagonal corners. Hence
\[
 V^*(Z^{\downarrow}(t))^wV=Z^w
   =V^*(Z^{\uparrow}(t))^wV.
\]
The corresponding identities for coanalytic words follow by taking adjoints.
Thus both $(Z^{\downarrow}(t),V)$ and
$(Z^{\uparrow}(t),V)$ are $\Gamma_{\mathrm{hol},s}$-pairs.

Intrinsic Jensen convexity now gives
\begin{equation}\label{eq:triangular-jensen}
 f(Z)\preceq V^*f(Z^{\downarrow}(t))V,
 \qquad
 f(Z)\preceq V^*f(Z^{\uparrow}(t))V.
\end{equation}
Write
\[
 K_{\downarrow,j}=\begin{pmatrix}0&0\\H_j&0\end{pmatrix},
 \qquad
 K_{\uparrow,j}=\begin{pmatrix}0&H_j\\0&0\end{pmatrix}.
\]
The second-order free Taylor expansion at $Z\oplus Z$ has the form
\begin{align}
 V^*f(Z\oplus Z+tK_\downarrow)V
   &=f(Z)+t^2Q_\downarrow(Z,H)+o(t^2),\label{eq:q-down}\\
 V^*f(Z\oplus Z+tK_\uparrow)V
   &=f(Z)+t^2Q_\uparrow(Z,H)+o(t^2).\label{eq:q-up}
\end{align}
There are no first-order diagonal terms. There are also no pure second-order
terms: their block factors contain either $E_{21}E_{21}$ or
$E_{12}E_{12}$, both zero. The first corner in
\eqref{eq:q-down} selects the mixed terms in which $H^*$ occurs before $H$,
while the first corner in \eqref{eq:q-up} selects those in which $H$ occurs
before $H^*$. Consequently, with the normalization in
\eqref{eq:complex-hessian},
\begin{equation}\label{eq:hessian-split}
 \Delta f(Z)[H]=Q_\downarrow(Z,H)+Q_\uparrow(Z,H).
\end{equation}
This identity is the matrix-unit decomposition of the mixed second
derivative; it may first be checked on nc monomials and then follows for a
$C^2$ free function from its second Fr\'echet differential.

Subtracting $f(Z)$ in \eqref{eq:triangular-jensen}, dividing by $t^2$, and
letting $t\to0$ shows that both $Q_\downarrow(Z,H)$ and
$Q_\uparrow(Z,H)$ are positive semidefinite. Equation
\eqref{eq:hessian-split} therefore gives $\Delta f(Z)[H]\succeq0$.
\end{proof}

\begin{proposition}[Convex holomorphic pullbacks are plush]
\label{prop:convex-pullback-plush}
Let $\Phi$ be a free holomorphic map and let $G$ be a twice differentiable
matrix-convex free function on a neighborhood of the self-adjoint
realification of $\Phi(D)$. Then
\[
 f(Z)=G\big(\operatorname{Re}\Phi(Z),
             \operatorname{Im}\Phi(Z)\big)
\]
is free plurisubharmonic on $D$. In particular, every lift-Cartan
$\Gamma_{\mathrm{hol},s}$-convex function is locally free
plurisubharmonic.
\end{proposition}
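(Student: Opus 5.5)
The plan is to compute the nc complex Hessian of $f=G\circ(\operatorname{Re}\Phi,\operatorname{Im}\Phi)$ with the chain rule, and to use holomorphy of $\Phi$ to kill every term except the real Hessian of $G$ evaluated in one real direction. Fix $Z\in D$ and a direction $H$. Put
\[
 \Phi(Z+tH)=\Phi(Z)+tA+t^2B+o(|t|^2),\qquad A=D\Phi(Z)[H],\quad B=\tfrac12D^2\Phi(Z)[H,H].
\]
Only powers of $t$ occur, never $\overline t$, because $\Phi$ is free holomorphic. Write $W=\Phi(Z)$ and $u(t)=(\operatorname{Re}\Phi(Z+tH),\operatorname{Im}\Phi(Z+tH))$; here $\operatorname{Re}$ and $\operatorname{Im}$ are the self-adjoint parts $(T+T^*)/2$ and $(T-T^*)/2i$. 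Then $u$ is a real-analytic function of $(t,\overline t)$ with $\partial_t\partial_{\overline t}u(0)=0$, since $\operatorname{Re}(t^2B)$ and $\operatorname{Im}(t^2B)$ contain only $t^2$ and $\overline t^{\,2}$. For the first-order part,
\[
 \partial_tu(0)=\tfrac12\bigl(A,-iA\bigr),\qquad \partial_{\overline t}u(0)=\tfrac12\bigl(A^*,\,iA^*\bigr),
\]
with the first and second entries interpreted in the real and imaginary slots.

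Next apply the second-order chain rule for $C^2$ maps between Banach spaces to $f(Z+tH)=G(u(t))$:
\[
 \partial_t\partial_{\overline t}(G\circ u)(0)=D^2G(u(0))[\partial_tu,\partial_{\overline t}u]+DG(u(0))[\partial_t\partial_{\overline t}u].
\]
The second term vanishes by the previous paragraph. For the first, decompose the real $t$-direction into its $s$ and $r$ parts, $t=s+ir$. Then $\partial_t\partial_{\overline t}=\frac14(\partial_s^2+\partial_r^2)$. Since $\partial_t\partial_{\overline t}u(0)=0$, the second derivatives of $u$ drop out, and we get
\[
 \Delta f(Z)[H]=\tfrac14\Bigl(D^2G(u(0))[\partial_su,\partial_su]+D^2G(u(0))[\partial_ru,\partial_ru]\Bigr).
\]
Both directions $\partial_su(0)=(\operatorname{Re}A,\operatorname{Im}A)$ and $\partial_ru(0)=(\operatorname{Re}(iA),\operatorname{Im}(iA))$ are self-adjoint tuples at the same level as $u(0)$. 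Matrix convexity of a $C^2$ free function is equivalent to $D^2G[K,K]\succeq0$ for all self-adjoint directions $K$. This is the same Hessian criterion used in the proof of \cref{lem:quadratic-convexification}, and it follows by differentiating \eqref{eq:jensen} twice along a line, or from midpoint convexity at each level. Hence both terms are positive semidefinite, and so $\Delta f(Z)[H]\succeq0$.

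For the final sentence, let $f$ be lift-Cartan $\Gamma_{\mathrm{hol},s}$-convex. Then locally $f=G_X\circ\Gamma_{\mathrm{hol},s}$ with $G_X$ matrix convex and locally bounded, and $\Gamma_{\mathrm{hol},s}$ is the realification of the free holomorphic map $\mathbf m_s$, which is valued in the row operator-space completion. A locally bounded matrix-convex function is real analytic, so $G_X$ is $C^2$, and the computation above applies with $\Phi=\mathbf m_s$.

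The main obstacle is the infinite-dimensional target. I would justify the chain rule through Fr\'echet differentiability of $\mathbf m_s$ into the completion, which holds because of the radius gap $\|Z\|_{\mathrm{row}}<s$. I would also check that the Hessian criterion for matrix convexity holds for $G$ on an operator-space-valued domain, by restricting $G$ to real lines through $u(0)$, where \eqref{eq:jensen} at midpoints gives nonnegativity of the second derivative.
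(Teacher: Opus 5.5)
Your proposal is correct and follows the paper's argument: second-order chain rule, vanishing of $\partial_t\partial_{\overline t}$ on the analytic and coanalytic coordinates, and positivity of the real Hessian of the matrix-convex $G$, with the local statement obtained by taking $\Phi=\mathbf m_s$ and a convex lift. Your splitting $\partial_t\partial_{\overline t}=\frac14(\partial_s^2+\partial_r^2)$ into the two self-adjoint directions $(\operatorname{Re}A,\operatorname{Im}A)$ and $(\operatorname{Re}(iA),\operatorname{Im}(iA))$ is more careful than the paper's one-line formula $D^2G[D\Gamma_\Phi(Z)[H],D\Gamma_\Phi(Z)[H]]$, which is accurate only when read as the Hermitian form of $D^2G$ on the complex derivative, not as $D^2G$ evaluated on a single real direction.
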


\begin{proof}
The mixed derivative of a holomorphic coordinate vanishes, as does that of
its coanalytic adjoint. The second-order chain rule therefore contains no
term involving a second mixed derivative of $\Phi$. Up to the inessential
normalization in \eqref{eq:complex-hessian}, it gives
\[
 \Delta f(Z)[H]
 =D^2G\big(\Gamma_\Phi(Z)\big)
   \big[D\Gamma_\Phi(Z)[H],D\Gamma_\Phi(Z)[H]\big],
\]
where $\Gamma_\Phi=(\operatorname{Re}\Phi,
\operatorname{Im}\Phi)$. The right side is positive because the real nc
Hessian of a matrix-convex free function is positive. The local assertion
follows by applying the calculation to a convex lift.
\end{proof}

The converse uses a structural theorem. For polynomials it is the hereditary
and antihereditary decomposition of
Greene, Helton, and Vinnikov \cite{GHV}, together with Greene's local-to-global
version \cite{Greene}. For rational functions it follows from
\cite{DHKMV}; the uniformly real analytic version is proved in
\cite{PascoePSH}.

\begin{theorem}[Full-graph convexity and plurisubharmonicity]
\label{thm:plush-cartan}
The following statements hold.
\begin{enumerate}[label=\textup{(\roman*)}]
\item For a self-adjoint nc polynomial $p(Z,Z^*)$, the following are
equivalent:
\begin{enumerate}[label=\textup{(\alph*)}]
\item $p$ is plush;
\item $p$ is intrinsically $\Gamma_{\mathrm{hol},s}$-convex;
\item $p$ is lift-Cartan $\Gamma_{\mathrm{hol},s}$-convex.
\end{enumerate}
Only finitely many of the monomial coordinates are needed for the convex lift
of a given $p$.
\item Let $f$ be a uniformly real analytic self-adjoint free function on a
uniform free domain containing a scalar center, translated to $0$. On every
smaller uniform row ball about $0$ whose closure lies in the domain of
analyticity, one may choose $s$ so that the following are equivalent:
$f$ is plush, $f$ is intrinsically $\Gamma_{\mathrm{hol},s}$-convex, and
$f$ is lift-Cartan $\Gamma_{\mathrm{hol},s}$-convex.
\end{enumerate}
\end{theorem}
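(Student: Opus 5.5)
The plan is to prove both parts by a cycle of implications. Two links come directly from earlier results. Lift-Cartan implies intrinsic by \cref{prop:strong-implies-weak}; applied to the germ at each point, it gives the Jensen inequality \eqref{eq:gamma-jensen} for every $\Gamma_{\mathrm{hol},s}$-pair, since the convex lift satisfies ordinary matrix Jensen and \eqref{eq:gamma-pair} transports it. For polynomials the germs are global, so this is immediate. Intrinsic implies plush by \cref{prop:intrinsic-hol-plush}, because polynomials and uniformly real analytic functions are $C^2$. \Cref{prop:convex-pullback-plush} gives lift-Cartan implies plush a second time, which is a consistency check. So the only new work is plush implies lift-Cartan.

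For part (i), I will use the Greene--Helton--Vinnikov structure theorem \cite{GHV,Greene}. It says a plush self-adjoint nc polynomial has the form
\[
 p(Z,Z^*)=\sum_k h_k(Z)^*h_k(Z)+\sum_k a_k(Z)a_k(Z)^*+\operatorname{Re}q(Z)
\]
up to pluriharmonic terms, with $h_k,a_k,q$ analytic polynomials. Each analytic polynomial is a finite linear combination of the coordinates $s^{-|w|}Z^w$. Write $h_k(Z)=L_k(\mathbf m_s(Z))$, $a_k(Z)=M_k(\mathbf m_s(Z))$, and $q(Z)=\ell(\mathbf m_s(Z))$, with $L_k,M_k,\ell$ linear in finitely many monomial coordinates. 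These are affine in $\Gamma_{\mathrm{hol},s}$, since $\operatorname{Re}$ and $\operatorname{Im}$ parts recover $Z^w$ linearly. Then
\[
 G(U)=\sum_k L_k(U)^*L_k(U)+\sum_k M_k(U)M_k(U)^*+\operatorname{Re}\ell(U)
\]
is a globally matrix-convex function of the lifted self-adjoint variables $U$. Hermitian squares of affine maps are matrix convex, both for $T^*T$ and for $TT^*$ with $T$ affine, by the Schur-complement argument of \cref{lem:schur} with $P=I$. We have $G\circ\Gamma_{\mathrm{hol},s}=p$, and only the finitely many words appearing in the $h_k,a_k,q$ are used, which gives the finiteness claim. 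The step needing care is that pluriharmonic terms with mixed words $Z^{w}(Z^{v})^*$ do not appear except through the squares. This is exactly the content of the GHV theorem in the form ``plush $=$ sum of hereditary squares, antihereditary squares, and pluriharmonic part'', so I would cite it in that form.

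For part (ii), I will follow the same template with the analytic version \cite{PascoePSH}. On the given row ball about $0$, a uniformly real analytic plush $f$ admits a convergent representation
\[
 f=\sum_k h_k^*h_k+\sum_k a_ka_k^*+\operatorname{Re}q,
\]
with analytic $h_k,a_k,q$, convergent on a slightly larger row ball. Equivalently it is a convex composition $f=G\circ(\operatorname{Re}\Phi,\operatorname{Im}\Phi)$. Choose $s$ strictly between the row radius of the given ball and the radius of uniform analyticity. Then the monomial row $\mathbf m_s$ lies in the row operator-space completion, and each analytic series factors as a completely bounded linear map of $\mathbf m_s$ with geometrically summable coefficients. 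The two estimates I would combine are the bound $\sum_{|w|=m}s^{-2m}Z^w(Z^w)^*\preceq(\|Z\|_{\mathrm{row}}/s)^{2m}I$ recorded earlier and the coefficient growth permitted by the larger analyticity radius. The columns $(h_k)$ and $(a_k)$ then become completely bounded affine column maps $L,M$ of $\Gamma_{\mathrm{hol},s}$, and $G(U)=L(U)^*L(U)+M(U)M(U)^*+\operatorname{Re}\ell(U)$ is matrix convex and locally bounded.

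The main obstacle is this factorization step. I must check that the infinite sums of squares converge as completely bounded maps on the chosen completion, uniformly over all amplifications, so that $G$ is genuinely defined and bounded on a uniform nc neighborhood of $\Gamma_{\mathrm{hol},s}$ of the ball. That is where the radius gap is spent. I would first try to absorb the factor $r^{|w|}$ from the gap into the square-summable coefficient bounds supplied by \cite{PascoePSH}.
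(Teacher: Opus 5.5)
Your part (i) follows the paper's argument: the Greene--Helton--Vinnikov decomposition, each analytic polynomial written affinely in finitely many monomial coordinates, and a global convex lift whose Jensen inequality gives intrinsic convexity. One small repair is needed there. A lift-Cartan polynomial has only \emph{local} lifts, so \cref{prop:strong-implies-weak} yields only local Jensen inequalities, and ``the germs are global'' is not available until plushness has been proved. Route (c)$\Rightarrow$(a) through \cref{prop:convex-pullback-plush}, or note that \cref{prop:intrinsic-hol-plush} uses only the local triangular pairs. Then obtain global intrinsic convexity from the global lift that plushness produces.

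The genuine gap is in part (ii). You take the analytic input to be a convergent representation $f=\sum_k h_k^*h_k+\sum_k a_ka_k^*+\operatorname{Re}q$ and call it equivalent to a convex composition. It is not equivalent, and outside the polynomial category this sum-of-squares form is false. In one variable, $f(Z)=(I-Z^*Z)^{-1}$ is plush on $\|Z\|<1$: it is the pullback of the matrix-convex function $U\mapsto(I-U^*U)^{-1}$ by the identity, so \cref{prop:convex-pullback-plush} applies. Yet its series $\sum_n(Z^*Z)^n$ contains the word $Z^*ZZ^*Z$. That word is neither of hereditary shape $(Z^w)^*Z^v$, nor of antihereditary shape $Z^v(Z^w)^*$, nor pure. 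Comparing coefficients, no such representation exists, convergent or not. Hence no lift of your form $L(U)^*L(U)+M(U)M(U)^*+\operatorname{Re}\ell(U)$ exists either.

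What \cite{PascoePSH} actually supplies is the butterfly \eqref{eq:pascoe-plush-realization}. Its contractive mixing term $T$ creates exactly these interleaved words; here $g=1$, $v^+=Z$, $v^-=0$, $T=Z$. So the ``main obstacle'' you flag, convergence of infinite sums of squares, is not the real issue. The real issue is that the outer convex function must be allowed to be a genuine resolvent.

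The repair is the paper's route:
\begin{enumerate}
\item Write $f=\mathcal G\circ(\operatorname{Re}\Phi,\operatorname{Im}\Phi)$, with $\mathcal G$ an arbitrary locally bounded matrix-convex function and $\Phi$ analytic.
\item Spend the radius gap only on factoring $\Phi=A\circ\mathbf m_s$, with $A$ completely bounded and affine.
\item Take $G=\mathcal G\circ(\operatorname{Re}A,\operatorname{Im}A)$.
\end{enumerate}
Precomposition with an affine map of the self-adjoint lifted coordinates preserves matrix convexity. So $G$ is a convex lift through $\Gamma_{\mathrm{hol},s}$ on the smaller ball, and its Jensen inequality gives intrinsic convexity there.
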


\begin{proof}
Intrinsic $\Gamma_{\mathrm{hol},s}$-convexity implies plushness directly by
\cref{prop:intrinsic-hol-plush}. A convex lift implies plushness by
\cref{prop:convex-pullback-plush}. It remains to prove that plushness supplies
both kinds of $\Gamma$-convexity.

If $p$ is plush, the classification theorem of \cite{GHV} gives analytic nc
polynomials $a_j,b_k,h$ such that
\begin{equation}\label{eq:plush-sos}
 p(Z,Z^*)=
 \sum_j a_j(Z)^*a_j(Z)+
 \sum_k b_k(Z)b_k(Z)^*+h(Z)+h(Z)^*.
\end{equation}
Each analytic polynomial in \eqref{eq:plush-sos} is an affine linear function
of the finite monomial vector. If $A_j(W),B_k(W),H(W)$ denote the
corresponding affine functions of independent lifted coordinates, then
\[
 G(W)=\sum_jA_j(W)^*A_j(W)+
       \sum_kB_k(W)B_k(W)^*+H(W)+H(W)^*
\]
is matrix convex and $p=G\circ\Gamma_{\mathrm{hol},s}$. Indeed, its Hessian
is a sum of the positive squares of the affine differentials of $A_j$ and
$B_k$. This is a global convex lift, and hence proves the lift-Cartan
condition.

The same representation also proves intrinsic Jensen convexity. If $(Z,V)$ is a
$\Gamma_{\mathrm{hol},s}$-pair and $Y=V^*ZV$, then preservation of all
analytic moments gives $a_j(Y)=V^*a_j(Z)V$, and similarly for $b_k$ and $h$.
Since $VV^*\preceq I$,
\[
 a_j(Y)^*a_j(Y)\preceq V^*a_j(Z)^*a_j(Z)V,
 \qquad
 b_k(Y)b_k(Y)^*\preceq V^*b_k(Z)b_k(Z)^*V,
\]
while the $h+h^*$ term compresses with equality. Summing proves intrinsic
$\Gamma_{\mathrm{hol},s}$-convexity.

For the analytic statement, the local convex-composition theorem of
\cite{PascoePSH} represents a uniformly real analytic plush function as a
matrix-convex free function composed with an analytic free map. On a smaller
ball, every component of that analytic map has a uniformly convergent free
power series. Choosing the intermediate radius $s$ as above makes the map a
completely bounded affine function of $\mathbf m_s$. Composing the convex
outer function with this affine map gives a convex lift through
$\Gamma_{\mathrm{hol},s}$ on the smaller ball. Its matrix Jensen inequality
gives intrinsic $\Gamma_{\mathrm{hol},s}$-convexity there. The two converse
implications were established in the first paragraph.
\end{proof}

\begin{remark}[Regularity and the converse implication]
The implication
\[
 \text{intrinsic }\Gamma_{\mathrm{hol},s}\text{-convexity}
 \quad\Longrightarrow\quad
 \text{free plurisubharmonicity}
\]
uses only $C^2$ regularity and the openness of the free domain; no lifting or
representation theorem enters. The reverse implication in
\cref{thm:plush-cartan} uses the polynomial sum-of-squares theorem or the
uniformly analytic convex-composition theorem. We do not assert the reverse
for an arbitrary merely $C^2$ free function without such a structural
hypothesis.
\end{remark}

The analytic realization used in the preceding proof is
\begin{equation}\label{eq:pascoe-plush-realization}
 f(Z)=\operatorname{Re}g(Z)+
 \begin{bmatrix}v^+(Z)\\ v^-(Z)\end{bmatrix}^{*}
 \begin{bmatrix}I&-T(Z)\\-T(Z)^*&I\end{bmatrix}^{-1}
 \begin{bmatrix}v^+(Z)\\ v^-(Z)\end{bmatrix},
\end{equation}
where $g,v^+,T$ are analytic, $v^-$ is coanalytic, and $T$ is contractive
\cite{PascoePSH}. Formula \eqref{eq:pascoe-plush-realization} is a convex
butterfly once the analytic quantities are regarded as independent
coordinates. The full monomial graph provides a common coordinate system for
these quantities.

Combining \cref{thm:plush-cartan} with the $\Gamma$-Cartan extension theorem
gives the following continuation statement.

\begin{corollary}[Plurisubharmonic butterfly extension]
\label{cor:plush-extension}
Let $K$ be a bounded real free set of complex tuples, and
choose $s$ so that $\Gamma_{\mathrm{hol},s}(K)$ is bounded. Let $f$ be a
uniformly real analytic free plurisubharmonic function on a neighborhood of
$K$ for which the radius-gap hypothesis above holds. Then there are affine
free maps $\ell,\Lambda,P$ such that
\begin{equation}\label{eq:plush-butterfly}
 F(Z)=\ell(\Gamma_{\mathrm{hol},s}(Z))+
 \Lambda(\Gamma_{\mathrm{hol},s}(Z))^*
 P(\Gamma_{\mathrm{hol},s}(Z))^{-1}
 \Lambda(\Gamma_{\mathrm{hol},s}(Z))
\end{equation}
extends $f$ and is free plurisubharmonic on
\[
 \Omega_{\mathrm{hol}}
 =\{Z:P(\Gamma_{\mathrm{hol},s}(Z))\succ0\}.
\]
Moreover, $\Omega_{\mathrm{hol}}$ contains the full-holomorphic
$\Gamma$-hull
\[
 \Gamma_{\mathrm{hol},s}^{-1}\!\left(
   \ncco(\Gamma_{\mathrm{hol},s}(K))\right).
\]
The same conclusion holds for plush polynomials without the analytic
radius-gap hypothesis.
\end{corollary}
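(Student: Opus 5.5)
The plan is to chain \cref{thm:plush-cartan} into \cref{thm:gamma-extension}, and then recover plurisubharmonicity of the extension from \cref{prop:convex-pullback-plush}.

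\emph{Step 1: the lift-Cartan hypothesis.} Let $f$ be uniformly real analytic and plush near $K$. We first check that $f|_K$ is lift-Cartan $\Gamma_{\mathrm{hol},s}$-convex. Fix $A\in K$ and translate it to a scalar center. We then use part (ii) of \cref{thm:plush-cartan}, with the radius-gap hypothesis. This gives a uniform row ball about $A$ and an ordinary matrix-convex germ $G_A$ in the lifted coordinates satisfying $f=G_A\circ\Gamma_{\mathrm{hol},s}$ near $A$.

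\emph{Main obstacle.} The translation step is where I expect the difficulty. Part (ii) is stated at a scalar center, while $A$ is an operator point. Moreover, the single weight $s$ must serve all of $K$ simultaneously. I would handle this in two moves.
\begin{itemize}
\item Coordinatize as in the remark after \cref{thm:local-butterfly}, regarding $A$ as a scalar point over the operator system it generates.
\item Observe that translation by $A$ changes analytic monomials only by affine combinations of lower monomials. These combinations have completely bounded coefficients once $s$ exceeds the row radius of $K$ with a gap. So a convex lift in the translated monomials is still an affine, hence convex, lift in the original $\mathbf m_s$.
\end{itemize}
Since the monomial tuple is infinite, it is more economical to verify Cartan convexity of $\widehat f$ directly at the single universal point $\mathbf X$ of \cref{lem:universal-point} applied to $\Gamma_{\mathrm{hol},s}(K)$. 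By the proof of \cref{thm:cartan-extension}, that is all that is used.

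\emph{Step 2: extension and realization.} Next we apply \cref{thm:gamma-extension} with $J$ the set of nonempty words. Its hypotheses hold: $\Gamma_{\mathrm{hol},s}(K)$ is bounded in the row completion by the choice of $s$. It is a real free set because $\Gamma_{\mathrm{hol},s}$ respects direct sums, unitary conjugation and reducing subspaces, and is injective. The theorem produces affine $\ell,\Lambda,P$ with the following properties:
\begin{itemize}
\item $F$ given by \eqref{eq:plush-butterfly} satisfies $F=f$ on $K$;
\item $\Omega_{\mathrm{hol}}=\{P(\Gamma_{\mathrm{hol},s})\succ0\}$ is open;
\item $\Omega_{\mathrm{hol}}$ contains $\Gco(K)=\Gamma_{\mathrm{hol},s}^{-1}(\ncco(\Gamma_{\mathrm{hol},s}(K)))$, by \cref{prop:gamma-hull}.
\end{itemize}
The coefficient spaces are possibly infinite-dimensional, and all affine maps are completely bounded on the row completion, as stipulated in \cref{sec:gamma}.

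\emph{Step 3: plurisubharmonicity of $F$.} We have $F=G\circ\Gamma_{\mathrm{hol},s}$, where
\[
 G(W)=\ell(W)+\Lambda(W)^*P(W)^{-1}\Lambda(W)
\]
is matrix convex on $\{P\succ0\}$ by \cref{lem:schur}. Here $\Gamma_{\mathrm{hol},s}$ is the realification of the free holomorphic map $\mathbf m_s$, which is defined on the domain where the row radius is below $s$. So \cref{prop:convex-pullback-plush}, with $\Phi=\mathbf m_s$, gives $\Delta F\succeq0$ on $\Omega_{\mathrm{hol}}$, intersected with that domain of $\mathbf m_s$. We therefore take $\Omega_{\mathrm{hol}}$ inside the domain of $\mathbf m_s$. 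The twice differentiability of $G$ needed there follows from analyticity of matrix inversion where $P\succ0$.

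\emph{Polynomial case.} For plush polynomials, part (i) of \cref{thm:plush-cartan} gives a global convex lift through finitely many monomial coordinates. Any $s$ making $\Gamma_{\mathrm{hol},s}(K)$ bounded then works, with no radius gap needed. Alternatively one can restrict $\Gamma$ to those finitely many coordinates, which are polynomial and hence defined everywhere. Steps 2 and 3 then apply verbatim.
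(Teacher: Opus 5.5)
Your skeleton (\cref{thm:plush-cartan} for the lift-Cartan hypothesis, then \cref{thm:gamma-extension} with \cref{prop:gamma-hull}, then \cref{prop:convex-pullback-plush} applied to $F=G\circ\Gamma_{\mathrm{hol},s}$) is exactly how the paper obtains the corollary. Your Steps~2 and~3 and the polynomial case are fine.

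The step that does not work as written is your resolution of the ``main obstacle'' in Step~1. Translation by a non-scalar point $A$ does not act on the monomial coordinates by affine free maps. Already $(Z-A)^2=Z^2-AZ-ZA+A^2$, and $Z\mapsto AZ+ZA$ is not of the form $A_0\otimes I+\sum_w A_w\otimes Z^w$, because $A$ multiplies in the same tensor factor as $Z$. Coordinatizing over the operator system generated by $A$ would only give a lift through $A$-valued generalized monomials, not through $\Gamma_{\mathrm{hol},s}$. Moreover, the paper's coordinatization remark concerns \cref{thm:local-butterfly}. The convex-composition theorem behind \cref{thm:plush-cartan}(ii) is available only at a scalar center.

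The detour is also unnecessary, because the radius-gap hypothesis already removes it. That hypothesis says $f$ is uniformly analytic on a row ball about a scalar center (translated to $0$), whose radius exceeds the row radius of $K$, with $s$ chosen in the gap. Apply \cref{thm:plush-cartan}(ii) once, at that scalar center. This gives a single matrix-convex $G$ in the lifted coordinates with $f=G\circ\Gamma_{\mathrm{hol},s}$ on a row ball containing all of $K$.

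The universal point also lies in this ball. $\Gamma_{\mathrm{hol},s}$ respects direct sums, and the row norm of a direct sum is the supremum of the row norms of its summands. Hence the universal point of $\Gamma_{\mathrm{hol},s}(K)$ from \cref{lem:universal-point} is $\Gamma_{\mathrm{hol},s}$ of a point of $K$ inside that ball.

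Thus $\widehat f$ is Cartan convex on $\Gamma_{\mathrm{hol},s}(K)$ with one global germ and one $s$, which is all \cref{thm:gamma-extension} needs. Your remark that the proof of \cref{thm:cartan-extension} uses the lift only at the universal point is correct, but with this reading you do not need it.
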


The scalar specialization recovers a classical hull. The weights do not affect
the moment equations, and therefore, for compact $K\subset\mathbb C^g$,
\begin{equation}\label{eq:scalar-holomorphic-hull}
 \begin{aligned}
 &z\in\Gamma_{\mathrm{hol},s}^{-1}\!\left(
       \overline{\operatorname{co}}\,\Gamma_{\mathrm{hol},s}(K)\right)\\
 &\qquad\Longleftrightarrow\quad
 \exists\mu\in\operatorname{Prob}(K):
 p(z)=\int_Kp(\zeta)\,d\mu(\zeta)
 \quad(p\in\mathbb C[z_1,\ldots,z_g]).
 \end{aligned}
\end{equation}
The right side is the representing-measure description of the polynomial hull
\cite{Gamelin}. Thus the scalar closed full-holomorphic $\Gamma$-hull is
$\widehat K$. Analytic and coanalytic monomial coordinates therefore turn an
ordinary convex hull in the lifted space into the polynomial hull in the
original variables. For the free plush class, the matrix Jensen inequality in
the lifted variables pulls back to the corresponding Jensen inequality for the
representing measures in
\eqref{eq:scalar-holomorphic-hull}.

\section{Standard \texorpdfstring{$\Gamma$-geometries}{Gamma-geometries} from partial convexity}
\label{sec:standard-gamma}

\subsection{Partial convexity}
Before turning to the saddle, it is useful to revisit two established
$\Gamma$-geometries. They show how much the choice of a single product
coordinate changes which compressions survive.

Split the variables as $(x,y)$ and take
\[
 \Gamma_{\mathrm{par}}(x,y)=(x,y,y^2).
\]
The equality
\[
 V^*Y^2V=(V^*YV)^2
\]
holds exactly when $\ran V$ reduces $Y$. It follows that
$\Gamma_{\mathrm{par}}$-convexity is convexity in $x$ with the parameter $y$
held fixed; see \cite[Proposition~4.1]{JKMMPGamma}. Theorem
\ref{thm:gamma-extension} gives, for every lift-Cartan partially convex
function, a realization
\[
 f(x,y)=\ell(x,y,y^2)+\Lambda(x,y,y^2)^*
 P(x,y,y^2)^{-1}\Lambda(x,y,y^2)
\]
on a neighborhood of the partially convex hull. The lift-Cartan qualification
is essential by \cref{prop:partial-nonlift}. General partially convex rational
functions instead have the sharper root-butterfly realization of
\cite{JKMMPPartial}, whose
parameter-dependent affine tail contains the mixed-Hessian freedom responsible
for the obstruction.

\subsection{\texorpdfstring{$xy$-convexity}{xy-convexity} and bilinear matrix inequalities}
The standard self-adjoint encoding of the non-self-adjoint product $xy$ is
\begin{equation}\label{eq:gamma-xy}
 \Gamma_{xy}(x,y)=\big(x,y,xy+yx,i(xy-yx)\big).
\end{equation}
For self-adjoint $X,Y$, condition \eqref{eq:gamma-pair} is equivalent to
\[
 V^*XYV=(V^*XV)(V^*YV).
\]
This is the $xy$-pair condition of
\cite{JKMMPGamma,JKMMPPartial}. Affine pencils in the coordinates
\eqref{eq:gamma-xy} are precisely self-adjoint bilinear matrix pencils after
separating real and imaginary parts. Thus \cref{thm:gamma-extension} produces
analytic butterfly certificates on neighborhoods of $xy$-convex hulls, while
the polynomial and rational results of \cite{JKMMPPartial} give strong global
algebraic restrictions in their respective categories.

The next section omits the skew-adjoint product coordinate. Its absence permits
nonreducing compressions and produces a different scalar geometry.

\section{Jordan-product convexity and hyperbolic hulls}
\label{sec:jordan}

Let
\begin{equation}\label{eq:gamma-jordan}
 \Gamma_J(x,y)=(x,y,xy+yx).
\end{equation}
We call the associated notion \emph{Jordan-product convexity}. It differs from
standard $xy$-convexity by one missing coordinate: the skew-adjoint part of
$xy$ is no longer preserved.

\subsection{The compression condition}

\begin{proposition}[Jordan covariance]\label{prop:jordan-covariance}
Let $X,Y\in B(\Kk)_{\mathrm{sa}}$, let $V:\Hh\to\Kk$ be an isometry, and put
\[
 P=VV^*,\qquad A=(I-P)XV,\qquad B=(I-P)YV.
\]
Then $((X,Y),V)$ is a $\Gamma_J$-pair if and only if
\begin{equation}\label{eq:jordan-leakage}
 A^*B+B^*A=0.
\end{equation}
For the full $\Gamma_{xy}$ in \eqref{eq:gamma-xy}, the corresponding condition
is $A^*B=0$.
\end{proposition}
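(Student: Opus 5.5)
The plan is to compute the defect $V^*\Gamma_J(X,Y)V-\Gamma_J(V^*XV,V^*YV)$ coordinate by coordinate, exactly as in the proof of \cref{prop:quadratic-collapse}. The first two coordinates of $\Gamma_J$ are the variables themselves, so they compress with equality for every isometry. The pair condition \eqref{eq:gamma-pair} therefore reduces to a single identity in the third coordinate:
\[
 V^*(XY+YX)V=(V^*XV)(V^*YV)+(V^*YV)(V^*XV).
\]

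The key step is to insert $I=P+(I-P)$ between the two factors of each product. Since $V^*P=V^*$ and $PV=V$, we get
\[
 V^*XYV=V^*XPYV+V^*X(I-P)YV=(V^*XV)(V^*YV)+V^*X(I-P)YV.
\]
Now $(I-P)$ is a self-adjoint projection and $X$ is self-adjoint. Writing $(I-P)=(I-P)^2$ gives
\[
 V^*X(I-P)YV=\big((I-P)XV\big)^*\big((I-P)YV\big)=A^*B.
\]
Symmetrically, $V^*YXV=(V^*YV)(V^*XV)+B^*A$. Adding the two expressions, the defect in the Jordan coordinate is exactly $A^*B+B^*A$. Hence the pair condition holds if and only if \eqref{eq:jordan-leakage} holds.

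For the full graph $\Gamma_{xy}$ in \eqref{eq:gamma-xy}, the same computation gives the defects $A^*B+B^*A$ in the third coordinate and $i(A^*B-B^*A)$ in the fourth. Both vanish if and only if $A^*B$ and $B^*A$ both vanish. Since $B^*A=(A^*B)^*$, this is equivalent to $A^*B=0$. This matches the reformulation $V^*XYV=(V^*XV)(V^*YV)$ recorded after \eqref{eq:gamma-xy}, whose defect is precisely $A^*B$.

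No step here is a real obstacle; the only point requiring care is the bookkeeping of adjoints. Specifically, one must check that the cross term is $A^*B$ and not $B^*A$. This uses self-adjointness of $X$, so that $V^*X(I-P)=((I-P)XV)^*$. It also uses idempotence of $I-P$ to split it symmetrically between the two factors. Everything else is the direct expansion above.
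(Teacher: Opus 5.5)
Your proposal is correct and is essentially the paper's proof: insert $I=P+(I-P)$ between the factors, identify the cross terms as $A^*B$ and $B^*A$ using self-adjointness of $X$ and idempotence of $I-P$, and for $\Gamma_{xy}$ combine this with the skew defect $i(A^*B-B^*A)$ to get $A^*B=0$. One cosmetic point: the first term collapses because $P=VV^*$ gives $V^*XPYV=(V^*XV)(V^*YV)$ directly; the relations $V^*P=V^*$ and $PV=V$ that you cite are true but are not what that step uses.
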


\begin{proof}
Write $P+(I-P)=I$ between the two factors. Then
\begin{align*}
 V^*(XY+YX)V
 &=(V^*XV)(V^*YV)+(V^*YV)(V^*XV)\\
 &\quad+V^*X(I-P)YV+V^*Y(I-P)XV\\
 &=(V^*XV)(V^*YV)+(V^*YV)(V^*XV)+A^*B+B^*A.
\end{align*}
This proves the first assertion. Preserving also $i(XY-YX)$ gives
$i(A^*B-B^*A)=0$; together with \eqref{eq:jordan-leakage}, this is equivalent
to $A^*B=0$.
\end{proof}

Condition \eqref{eq:jordan-leakage} does not force either off-diagonal operator
to vanish. Their contributions may cancel in the Jordan product.

\begin{example}[A nonreducing $\Gamma_J$-pair]\label{ex:3by3}
On $\mathbb C^3$, let $V:\mathbb C\to\mathbb C^3$ have range
$\mathbb Ce_1$ and set
\[
 X=E_{12}+E_{21},\qquad Y=E_{13}+E_{31}.
\]
Then $A=e_2$ and $B=e_3$, so $A^*B+B^*A=0$. Hence
$((X,Y),V)$ is a $\Gamma_J$-pair, although $\mathbb Ce_1$ reduces neither
$X$ nor $Y$. This gives a nonreducing admissible compression in dimension
three.
\end{example}

For an nc convex combination, the same computation yields the moment form of
the condition. If $\sum_iV_i^*V_i=I$ and
\[
 \bar X=\sum_iV_i^*X_iV_i,\qquad
 \bar Y=\sum_iV_i^*Y_iV_i,
\]
then the combination is admissible for $\Gamma_J$ precisely when
\begin{equation}\label{eq:matrix-jordan-moment}
 \sum_iV_i^*(X_iY_i+Y_iX_i)V_i
 =\bar X\bar Y+\bar Y\bar X.
\end{equation}
Thus the $\Gamma_J$-hull is characterized by vanishing Jordan covariance.

\subsection{The scalar saddle}
At level one,
\[
 \Gamma_J(x,y)=(x,y,2xy).
\]
Its image is the hyperbolic paraboloid
\[
 \Sigma=\{(x,y,z)\in\mathbb R^3:z=2xy\}.
\]
The graph-hull identity specializes as follows.

\begin{proposition}[Zero-covariance barycenters]\label{prop:scalar-j-hull}
For $S\subseteq\mathbb R^2$, a point $(\bar x,\bar y)$ belongs to the scalar
$\Gamma_J$-convex hull generated using scalar points of $S$ if and only if
there are points $(x_i,y_i)\in S$ and probabilities $t_i$ such that
\begin{equation}\label{eq:zero-covariance}
 \bar x=\sum_it_ix_i,\qquad
 \bar y=\sum_it_iy_i,\qquad
 \sum_it_ix_iy_i=\bar x\bar y.
\end{equation}
Equivalently, the random variables $x$ and $y$ have covariance zero.
Moreover,
\begin{equation}\label{eq:scalar-saddle-hull}
 \Gamma_J\text{-}\operatorname{co}(S)
 =\{(x,y):(x,y,2xy)\in\operatorname{co}(\Gamma_J(S))\}.
\end{equation}
\end{proposition}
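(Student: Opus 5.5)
The plan is to specialize the graph-hull identity (\cref{prop:gamma-hull}) to level one and then translate the resulting scalar convex combinations upstairs into the moment conditions \eqref{eq:zero-covariance}. At the scalar level, an nc convex combination of scalar points $(x_i,y_i)$ with one-dimensional output is given by column entries $V_i$. Since each $X_i$ and $Y_i$ is a scalar multiple of the identity on its space, we have $V_i^*X_iV_i=x_i\|V_i\|^2$. Putting $t_i=\|V_i\|^2$ gives probabilities, and the combination becomes the ordinary convex combination $\sum_i t_i(x_i,y_i)$. Conversely, any finite probability vector is realized by taking $V_i=\sqrt{t_i}$. Infinite convex combinations with countable support are handled the same way; if one wants finitely many points, Carathéodory's theorem in $\mathbb R^3$ reduces to finite sums. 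Thus the scalar points of $\ncco(\Gamma_J(S))$ generated by scalar points of $S$ are exactly the points of $\operatorname{co}(\Gamma_J(S))\subset\mathbb R^3$.

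Next I apply the identity $\Gco(S)=\Gamma_J^{-1}(\ncco(\Gamma_J(S)))$ at level one. A scalar point $(\bar x,\bar y)$ lies in the hull iff $(\bar x,\bar y,2\bar x\bar y)=\sum_i t_i(x_i,y_i,2x_iy_i)$. Reading off the coordinates gives exactly the three equations in \eqref{eq:zero-covariance}, with the last one divided by $2$. This simultaneously proves the characterization and \eqref{eq:scalar-saddle-hull}. As a check against the proof of \cref{prop:gamma-hull}, one can alternatively verify the moment form \eqref{eq:matrix-jordan-moment} directly with $V_i=\sqrt{t_i}$. The left side becomes $2\sum_it_ix_iy_i$ and the right side becomes $2\bar x\bar y$, which recovers the same condition through \cref{prop:jordan-covariance}.

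Finally, the covariance reformulation is the identity $\operatorname{Cov}(x,y)=\sum_it_ix_iy_i-\bar x\bar y$ for the discrete measure $\sum_i t_i\delta_{(x_i,y_i)}$. The only real subtlety is the phrase ``generated using scalar points'': the hull computed at level one must use only scalar inputs, not compressions of higher-level points of the free closure of $S$. So I must be careful to restrict the graph-hull identity to level-one data rather than invoke it on the whole free set generated by $S$. I expect this bookkeeping to be the main obstacle. Once the restriction is made explicit, by applying the argument of \cref{prop:gamma-hull} verbatim to scalar $X^{(\alpha)}$, everything else is direct coordinate comparison.
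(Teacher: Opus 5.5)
Your proposal is correct and follows the paper's route: you specialize the graph-hull identity to level one, where nc convex combinations of scalar points become ordinary convex combinations with weights $t_i=\|V_i\|^2$, and then read off the third lifted coordinate $2xy$ to get the zero-covariance condition (the paper's proof is just this final observation, stated in one line). One small correction: Carath\'eodory's theorem does not by itself turn a countable convex combination $p=\sum_i t_ic_i$ of a bounded family into a finite one. First show that $p$ lies in the relative interior of $\operatorname{co}\{c_i:t_i>0\}$: a nontrivial supporting functional at $p$ would have to attain its maximum at every atom, and would therefore be constant on all of them. Only then apply Carath\'eodory.
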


\begin{proof}
Equation \eqref{eq:zero-covariance} is exactly the assertion that the convex
combination of the lifted points $(x_i,y_i,2x_iy_i)$ lies again on $\Sigma$.
This is \eqref{eq:scalar-saddle-hull}.
\end{proof}

For a two-point combination with weight $0<t<1$, the covariance condition is
\begin{equation}\label{eq:two-point-condition}
 t(1-t)(x_1-x_2)(y_1-y_2)=0.
\end{equation}
Thus nontrivial two-point $\Gamma_J$-segments are horizontal or vertical. The
next example shows that four points can generate an entire square.

\begin{example}[Four corners fill the square]\label{ex:four-corners}
Let $S=\{(\sigma,\tau):\sigma,\tau\in\{-1,1\}\}$. Given
$(x,y)\in[-1,1]^2$, assign the product weights
\[
 t_{\sigma,\tau}=\frac14(1+\sigma x)(1+\tau y).
\]
Then $\mathbb E\sigma=x$, $\mathbb E\tau=y$, and
$\mathbb E(\sigma\tau)=xy$. Hence the scalar $\Gamma_J$-hull of the four
corners is the entire square. By contrast, the two opposite points
$(1,1)$ and $(-1,-1)$ generate no interior segment, by
\eqref{eq:two-point-condition}.
\end{example}

\begin{proposition}[Sharp scalar Carath\'eodory number]
\label{prop:jordan-caratheodory}
Every point of a scalar $\Gamma_J$-hull has a zero-covariance representation
supported on at most four generating points. Four is best possible.
\end{proposition}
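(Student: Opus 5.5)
Both parts rest on \cref{prop:scalar-j-hull}, which reduces everything to ordinary convex geometry in $\mathbb R^3$. A point $(\bar x,\bar y)$ lies in the scalar $\Gamma_J$-hull of $S$ exactly when the lifted point $p=(\bar x,\bar y,2\bar x\bar y)\in\Sigma$ lies in $\operatorname{co}(\Gamma_J(S))$.

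For the upper bound I will apply the classical Carath\'eodory theorem in $\mathbb R^3$. It writes $p$ as a convex combination of at most $4$ points of $\Gamma_J(S)$, that is, of lifted points $(x_i,y_i,2x_iy_i)$ with $(x_i,y_i)\in S$. Comparing coordinates gives $\bar x=\sum t_ix_i$, $\bar y=\sum t_iy_i$ and $\sum t_i x_iy_i=\bar x\bar y$, which is precisely the zero-covariance representation \eqref{eq:zero-covariance}. This part is routine. The only point to record is that the representation produced by \cref{prop:scalar-j-hull} may use many generators, and Carath\'eodory is applied to the resulting finite set upstairs.

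For sharpness I will use \cref{ex:four-corners}: $S=\{\pm1\}^2$ and the center $(0,0)$, which lies in the hull. I must show that no zero-covariance representation of $(0,0)$ uses at most three corners.
\begin{itemize}
\item \emph{One point.} A single corner is never $(0,0)$.
\item \emph{Two points.} By \eqref{eq:two-point-condition}, a two-point representation with $0<t<1$ needs the two corners to share a coordinate. The segment joining them then lies on an edge of the square, so it cannot pass through $(0,0)$.
\item \emph{Three points.} Say the corners are $(\sigma_i,\tau_i)$ with weights $t_i$. The mean condition $\sum t_i\sigma_i=0$ with $\sigma_i=\pm1$ forces the total weight on $\sigma=1$ to be $1/2$, and similarly for $\tau$. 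The covariance condition requires $\sum t_i\sigma_i\tau_i=0$. Three distinct corners omit exactly one corner, say $(\sigma_0,\tau_0)$. The two corners sharing $\sigma_0$ or $\tau_0$ with it must each carry the whole half mass on their respective side. This gives weights $1/2,1/2,0$ on the corners adjacent to the omitted one and weight $0$ on the opposite corner. Hence the representation really uses only two corners, and these are the two adjacent corners $(\sigma_0,-\tau_0)$ and $(-\sigma_0,\tau_0)$, which form an opposite pair.
\end{itemize}
For that opposite pair the covariance is $\frac12(-\sigma_0\tau_0)+\frac12(-\sigma_0\tau_0)=-\sigma_0\tau_0\neq0$. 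This is the contradiction.

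Equivalently, the lifted point $(0,0,0)$ is the centroid of the tetrahedron $\operatorname{co}\Gamma_J(S)$, whose vertices are $(\pm1,\pm1,\pm2)$ with $z=2\sigma\tau$. This point lies in the interior of the tetrahedron, so it is not in the convex hull of any three of its affinely independent vertices. I expect the main care to be in the three-point case. The cleanest route is this interior-point argument: the four lifted corners are affinely independent, so $(0,0,0)$, having a unique barycentric representation with all weights $1/4$, needs all four. I would present that version and keep the weight computation as a check.
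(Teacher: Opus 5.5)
Your proposal is correct and follows the paper's proof: classical Carath\'eodory in $\mathbb R^3$ for the upper bound, and for sharpness the same four-corner example. There the lifted corners $(\sigma,\tau,2\sigma\tau)$ are affinely independent and have the origin as their barycenter with weights $1/4$, so $(0,0)$ needs all four atoms. Your case-by-case weight computation is a valid elementary cross-check that the paper omits, but the interior-point argument alone suffices.
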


\begin{proof}
The lifted points $\Gamma_J(S)$ lie in $\mathbb R^3$, so ordinary
Carath\'eodory gives a representing convex combination with at most four
atoms. For sharpness, use the four corners from \cref{ex:four-corners}. Their
lifts
\[
 (1,1,2),\quad(1,-1,-2),\quad(-1,1,-2),\quad(-1,-1,2)
\]
are affinely independent, and the origin is their barycenter with all four
weights equal to $1/4$. It is therefore in the interior of the resulting
tetrahedron and in the convex hull of no three of its vertices. Hence the
zero-covariance representation of $(0,0)$ from this generating set requires
all four atoms.
\end{proof}

\subsection{Hyperbolic separation}
An affine half-space in the lifted variables has the form
\[
 a+bx+cy+dz\geq0.
\]
Its pullback to $\Sigma$ is
\begin{equation}\label{eq:hyperbolic-halfspace}
 a+bx+cy+2dxy\geq0.
\end{equation}
When $d\neq0$, the boundary can be written
\[
 \left(x+\frac{c}{2d}\right)
 \left(y+\frac{b}{2d}\right)
 =\frac{bc-2ad}{4d^2},
\]
a rectangular hyperbola, possibly degenerate. When $d=0$ it is a line.

\begin{proposition}[Hyperbolic half-space description]
\label{prop:hyperbolic-separation}
For $S\subseteq\mathbb R^2$, the closed lifted scalar hull
\[
 \{(x,y):\Gamma_J(x,y)\in
          \overline{\operatorname{co}}(\Gamma_J(S))\}
\]
is the intersection of all regions \eqref{eq:hyperbolic-halfspace} containing
$S$. If $\operatorname{co}(\Gamma_J(S))$ is closed---in particular, if $S$ is
compact---this is the scalar $\Gamma_J$-convex hull itself.
\end{proposition}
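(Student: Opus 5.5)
The plan is to apply ordinary finite-dimensional convex duality in $\mathbb R^3$ to the lifted set $C=\overline{\operatorname{co}}(\Gamma_J(S))$ and then restrict to the saddle $\Sigma$.

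\emph{Hahn--Banach step.} A closed convex subset of $\mathbb R^3$ is the intersection of the closed half-spaces $\{a+bx+cy+dz\geq0\}$ that contain it. A half-space contains $C$ if and only if it contains $\Gamma_J(S)$, because half-spaces are closed and convex. Hence $(x,y,z)\in C$ exactly when $a+bx+cy+dz\geq0$ for every affine functional that is nonnegative on $\Gamma_J(S)$.

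\emph{Pullback step.} Set $z=2xy$. The condition $\Gamma_J(x,y)\in C$ then becomes: $a+bx+cy+2dxy\geq0$ for every $(a,b,c,d)$ with $a+bs+ct+2dst\geq0$ for all $(s,t)\in S$. The latter condition says precisely that the region \eqref{eq:hyperbolic-halfspace} contains $S$. This gives the first assertion as an intersection over all such regions. The case $d=0$ (lines) and the degenerate hyperbolas need no special treatment, since the argument is uniform in $(a,b,c,d)$.

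\emph{Closedness step.} Suppose $\operatorname{co}(\Gamma_J(S))$ is closed. By \eqref{eq:scalar-saddle-hull} in \cref{prop:scalar-j-hull}, the closed lifted hull then coincides with the scalar $\Gamma_J$-hull. For compact $S$, the set $\Gamma_J(S)$ is compact because $\Gamma_J$ is continuous. The convex hull of a compact set in $\mathbb R^3$ is compact, since it is the image of the compact set $\Gamma_J(S)^4\times\Delta_3$ under a continuous map by Carath\'eodory, as in \cref{prop:jordan-caratheodory}. Therefore it is closed.

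The only point needing care is making sure the pullback is an equivalence and not just an inclusion. This holds because membership of $\Gamma_J(x,y)$ in $C$ is tested exactly by the same family of functionals, evaluated on the graph. No separation argument on $\Sigma$ itself is needed.
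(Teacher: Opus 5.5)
Your proof is correct and follows the paper's argument: Hahn--Banach separation for the closed convex hull of $\Gamma_J(S)$ in $\mathbb R^3$, pulled back along $z=2xy$, with \eqref{eq:scalar-saddle-hull} identifying the result with the scalar $\Gamma_J$-hull when $\operatorname{co}(\Gamma_J(S))$ is closed. Your Carath\'eodory compactness argument for compact $S$ supplies a step the paper leaves implicit.
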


\begin{proof}
By \eqref{eq:scalar-saddle-hull}, membership is membership of the lifted point
in the closed ordinary convex hull of $\Gamma_J(S)$. The ordinary Hahn--Banach
separation theorem describes that convex hull as the intersection of its
affine half-spaces. Pulling them back gives
\eqref{eq:hyperbolic-halfspace}.
\end{proof}

At matrix levels, the same construction uses operator coefficients. A
$\Gamma_J$-pencil is
\begin{equation}\label{eq:jordan-pencil}
 L(X,Y)=A_0\otimes I+A_x\otimes X+A_y\otimes Y
 +A_J\otimes(XY+YX).
\end{equation}
Its positivity set is $\Gamma_J$-convex. At level one, positivity of a
matrix-valued pencil is the intersection, over coefficient-space vectors
$v$, of scalar hyperbolic half-spaces with coefficients
$v^*A_\bullet v$. The determinant boundary of a matrix-valued pencil can of
course have degree larger than two. The hyperbolas are the supporting scalar
inequalities; the determinant of a matrix-valued pencil need not be quadratic.

\subsection{Jordan-product butterfly realizations}
The $\Gamma$-Cartan theorem gives the following continuation result.

\begin{corollary}[Jordan-product butterfly]\label{cor:jordan-butterfly}
Let $K$ be a bounded real free set of pairs and let $f$ be lift-Cartan
$\Gamma_J$-convex on $K$. Then $f$ extends to a
$\Gamma_J$-convex neighborhood of $\Gamma_J\text{-}\ncco(K)$ and admits
the following realization, where $Z_J=(X,Y,XY+YX)$:
\[
 F(X,Y)=\ell(Z_J)+\Lambda(Z_J)^*P(Z_J)^{-1}\Lambda(Z_J).
\]
The denominator positivity domain and the epigraph are described by
$\Gamma_J$-pencils.
\end{corollary}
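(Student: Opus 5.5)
The corollary is a direct specialization of \cref{thm:gamma-extension} to the finite graph map $\Gamma=\Gamma_J$, with lifted variable set $J$ of three coordinates. I would carry out the specialization as follows.

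First, verify the standing hypotheses on $\Gamma_J$. It is a tuple of self-adjoint free polynomials. It contains the original coordinates as required by \eqref{eq:gamma-coordinates}, so it is injective and respects direct sums, unitary conjugation and reducing restrictions. Next, check that $\Gamma_J(K)$ is a bounded real free set in the three lifted variables. Boundedness follows from $\|XY+YX\|\le 2\|X\|\|Y\|$ and boundedness of $K$. Closure of $\Gamma_J(K)$ under unitary conjugation, reducing subspaces and direct sums of at most $\mu$ summands follows because $\Gamma_J$ is a free polynomial map and $K$ is a real free set. For reducing subspaces, note that a subspace reducing $(X,Y,XY+YX)$ already reduces $(X,Y)$, and $\Gamma_J$ commutes with the restriction. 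Since $|J|=3$, the universal lifted level is $\ell_2(2^{\aleph_0})$.

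Second, apply \cref{thm:gamma-extension} to the lift-Cartan $\Gamma_J$-convex function $f$. This produces affine free maps $\ell,\Lambda,P$ in the lifted coordinates and the function $F(X,Y)=G(Z_J)$, where $G$ is the butterfly. Parts (i)--(iii) of that theorem give the following:
\begin{enumerate}[label=\textup{(\alph*)}]
\item $F=f$ on $K$;
\item $\Omega_{\Gamma_J}=\{(X,Y):P(Z_J)\succ0\}$ is an open $\Gamma_J$-convex neighborhood of $\Gamma_J\text{-}\ncco(K)$, the inclusion coming from the graph-hull identity \cref{prop:gamma-hull};
\item $F$ is intrinsically $\Gamma_J$-convex there.
\end{enumerate}
Together these are the extension claim.

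Third, interpret the certificates. Write $P$ in finite coordinates as $P(X,Y,W)=A_0\otimes I+A_x\otimes X+A_y\otimes Y+A_J\otimes W$. Substituting $W=XY+YX$ gives exactly a $\Gamma_J$-pencil of the form \eqref{eq:jordan-pencil}, with operator coefficients allowed, so the denominator domain is the positivity set of a $\Gamma_J$-pencil. Likewise the block matrix in \eqref{eq:gamma-epigraph} is affine in $(Y',Z_J)$, where $Y'$ is the epigraph variable. Hence it is a block $\Gamma_J$-pencil in $(X,Y)$ together with the epigraph variable, and by \cref{lem:schur} it describes the epigraph.

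The only point needing care, and the main potential obstacle, is the verification that $\Gamma_J(K)$ is a real free set, in particular the reducing-subspace closure. This is handled by the observation above that reducing subspaces for the lifted tuple reduce the first two coordinates. Everything else is bookkeeping inherited from \cref{thm:gamma-extension}.
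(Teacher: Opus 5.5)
Your proposal is correct and matches the paper, which gives no separate proof and obtains the corollary directly by specializing \cref{thm:gamma-extension} to $\Gamma_J$, reading the denominator and epigraph certificates as $\Gamma_J$-pencils of the form \eqref{eq:jordan-pencil}. Your explicit check that $\Gamma_J(K)$ is a bounded real free set fills in a hypothesis the paper leaves implicit. The key observation there is that a subspace reducing $(X,Y,XY+YX)$ already reduces $(X,Y)$, which is also what makes $\widehat f$ a well-defined nc function.
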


Two elementary examples are
\[
 F(X,Y)=(XY+YX)^2
\]
and
\[
 F(X,Y)=\big(I-(XY+YX)\big)^{-1},
 \qquad I-(XY+YX)\succ0.
\]
They are pullbacks of the matrix-convex one-variable functions $s^2$ and
$(I-s)^{-1}$, respectively. Their dependence on the original variables is
represented through a single Jordan-product coordinate.

\section{Envelope interpretation and further problems}
\label{sec:outlook}

Theorems \ref{thm:cartan-extension} and \ref{thm:gamma-extension} are
extension statements formulated through butterfly realizations. A local
convex germ at the universal point produces an affine pencil $P$, and
positivity of $P$ propagates to every compression of every amplification of
that point. The domain $\{P\succ0\}$ is therefore a natural
continuation domain attached to the chosen realization. It need not be maximal
or unique, but it contains the relevant hull with a uniform positive margin.

This point of view is complementary to several existing envelope theories.
Effros--Winkler separation describes closed matrix-convex sets by pencils
\cite{EW}; Webster--Winkler and Davidson--Kennedy identify compact nc convex
sets with operator systems \cite{WW,DK}; and the $\Gamma$-theory describes
partial hulls by lifting and pulling back \cite{JKMMPGamma,KMS}. Here the
epigraph pencil records function values as well as set geometry, while the
royal-road theorem supplies analyticity and a resolvent formula.

We first record two elementary limitations of the extension theorem.

\begin{proposition}[Compactness does not replace all levels]
\label{prop:finite-level-obstruction}
Local convex liftability and compactness at finitely many levels do not imply
extension to the convex hull.
\end{proposition}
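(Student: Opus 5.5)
The plan is to give an explicit counterexample in one variable with three scalar points. Take $\mathcal I=\{1\}$ and
\[
 K=\{-1,\;0,\;1\}\subset\mathbb R=\Ss(\mathbb C),
\]
regarded as a collection living only at level one: it is closed under unitary conjugation and reducing subspaces, and it is compact at the single level where it has points. It is deliberately \emph{not} closed under direct sums, so it is not a real free set in the sense of \cref{sec:all-level} and has no universal point. Define
\[
 f(-1)=f(1)=0,\qquad f(0)=1.
\]
Since $K$ contains only scalar points, $f$ is trivially a self-adjoint graded function on $K$ respecting the available unitary conjugations and reducing restrictions.

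First I would check local convex liftability. Around each $a\in K$, take the uniform nc neighborhood $U_a$ of radius $1/2$, that is, all tuples within norm $1/2$ of amplifications of $a$. Then $U_a\cap K=\{a\}$, because the three points are at mutual distance at least $1$. On $U_a$, let $G_a$ be the constant function $G_a(X)=f(a)I$. This is affine, locally bounded and matrix convex, and it agrees with $f$ on $U_a\cap K$. Hence $f$ is Cartan convex at every point of $K$ in the sense of \cref{def:cartan}.

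Next I would show that no extension to the hull exists. The scalar part of $\ncco(K)$ contains the interval $[-1,1]$, since scalar convex combinations are nc convex combinations with scalar $V_\alpha$. Suppose $F$ were matrix convex on a matrix-convex neighborhood of $\ncco(K)$ with $F|_K=f$. Applying \eqref{eq:jensen} at level one to the combination $0=\tfrac12(-1)+\tfrac12(1)$, with $V_1=V_2=2^{-1/2}$, gives
\[
 1=F(0)\leq\tfrac12F(-1)+\tfrac12F(1)=0,
\]
which is a contradiction. The same conclusion holds if one only asks for a scalar convex extension on $[-1,1]$.

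The only delicate step is the interpretive one: making precise that this example satisfies the hypotheses as weakened in the statement, namely compactness at finitely many levels and local liftability, while failing the all-level free-set hypothesis. It is also worth recording why the free-set hypothesis excludes this example. If $K$ were closed under direct sums, it would contain $\operatorname{diag}(-1,1)$, and $f$ would take the value $\operatorname{diag}(0,0)$ there. A Cartan lift at that point, hence at the universal point, would already violate Jensen's inequality under the compression by $V=2^{-1/2}(1,1)^T$, which sends the point to $0$. This is exactly the compatibility that \cref{thm:cartan-extension} obtains for free, and which is missing when only finitely many levels are present.
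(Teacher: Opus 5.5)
Your proposal is correct and is essentially the paper's own proof: the same level-one datum $K=\{-1,0,1\}$ with $f(\pm1)=0$ and $f(0)=1$, locally constant (hence matrix-convex) lifts near the isolated points, and the midpoint Jensen contradiction $1\le 0$. Your closing remark, that closure under direct sums would exclude this datum, is a useful gloss on the role of the universal point but is not needed for the proposition.
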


\begin{proof}
Already at level one, take
\[
 K=\{-1,0,1\},\qquad f(-1)=f(1)=0,\qquad f(0)=1.
\]
The datum is compact and is locally the restriction of a constant, hence
convex, function near each of its three isolated points. If it extended to a
convex function on a neighborhood of $[-1,1]$, however, midpoint convexity
would give
\[
 1=f(0)\leq\frac{f(-1)+f(1)}2=0,
\]
a contradiction. Thus compactness cannot replace the compatibility supplied
by a universal direct sum.
\end{proof}

\begin{proposition}[The Cartan data do not select a continuation]
\label{prop:no-canonical-continuation}
There need not be a nontrivial neighborhood on which all matrix-convex lifts
of fixed Cartan data agree.
\end{proposition}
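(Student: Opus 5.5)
The plan is to exhibit explicit Cartan data, concretely a bounded real free set $K$ and a function $f$ on it, together with two locally bounded matrix-convex free functions. Both will be defined on a common open matrix-convex neighborhood of $\ncco(K)$, both will restrict to $f$ on $K$, and they will differ at points arbitrarily close to $K$, in fact at points of the hull itself. Any neighborhood on which all lifts are forced to agree would then have to exclude those points, so no nontrivial common continuation domain exists.

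The first choice I would try is the smallest one-variable example. Let $K$ be the real free set generated by the two scalar points $\pm1$. Concretely, $K$ consists of all self-adjoint unitaries $X$ (so $X^2=I$), which is closed under unitary conjugation, reducing subspaces and bounded direct sums, and is bounded. Take $f(X)=I$ on $K$; it is a self-adjoint nc function there. Then consider
\[
 G_1(X)=I,\qquad G_2(X)=X^2 .
\]
Both are matrix convex on all of $\Ss^1$: $G_1$ is affine, and $X^2=\ell(X)+\Lambda(X)^*P(X)^{-1}\Lambda(X)$ with $\ell=0$, $\Lambda(X)=X$, $P=1$, so \cref{lem:schur} applies. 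Both agree with $f$ on $K$ because $X^2=I$ there. In particular $f$ is Cartan convex in the sense of \cref{def:cartan}, and both $G_1$ and $G_2$ are matrix-convex lifts (indeed global extensions of the type in \cref{thm:cartan-extension}).

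Next I would compare the two lifts on the hull. The nc convex hull of $K$ contains every self-adjoint contraction, for instance $0=\tfrac12(1)+\tfrac12(-1)$. There $G_1(0)=I\neq0=G_2(0)$. More generally, if $X$ is any self-adjoint tuple with $X^2\neq I$, then the two lifts differ at $X$, and such points occur in every uniform neighborhood of every point of $K$: perturb $X\in K$ to $(1-\delta)X$. Hence the largest set on which all lifts agree contains no uniform neighborhood of any point of $K$, and is not a neighborhood of the hull. For extra robustness one may use the one-parameter family $G_t=(1-t)I+tX^2$, $t\in[0,1]$, to show that the values on the hull are genuinely unconstrained.

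There is no real obstacle here. The only point needing care is checking that $K$ meets the paper's definition of a real free set, in particular closure under reducing subspaces and under direct sums of at most $\mu$ summands; this holds because reducing restrictions and direct sums of self-adjoint unitaries are again self-adjoint unitaries. It is also worth remarking that $G_2$ is not produced at the universal point with a different pencil in any canonical way. This shows that the domain $\{P\succ0\}$ in \cref{thm:cartan-extension} depends on the chosen realization, as the surrounding discussion asserts.
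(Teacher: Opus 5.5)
Your argument is correct and is essentially the paper's. The paper takes $K$ to be the zero matrix at every level with $f=0$ and compares the global extensions $0$ and $X^2$; your choice of the self-adjoint unitaries with $f=I$ and the lifts $I$ and $X^2$ runs the same mechanism, and it also shows disagreement inside $\ncco(K)$ itself (for instance at $0$), which the paper's trivial hull cannot. One small correction to your closing aside: both of your lifts have butterfly realizations with $P\equiv 1$, so the example shows that the extension $F$ depends on the chosen lift, not that the domain $\{P\succ0\}$ does.
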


\begin{proof}
In one variable let $K$ consist of the zero matrix at every level and put
$f(0)=0$. Both
\[
 G_0(X)=0,\qquad G_2(X)=X^2
\]
are globally matrix-convex free functions extending $f$, but they disagree
at every nonzero point. Hence the values $(K,f)$ alone cannot determine a
maximal common continuation domain.
\end{proof}

The following questions remain open.

\begin{question}[Truncated Jensen completion]
Fix $N$ and let $K$ be compact truncated free data through level $N$. Suppose
$f$ satisfies every matrix Jensen inequality witnessed by direct sums and
compressions that remain at levels at most $N$. Is this finite collection of
compatibility conditions sufficient for a matrix-convex extension through
level $N$ to a neighborhood of the truncated hull? If not, what additional
boundary-level compatibility is necessary?
\end{question}

\begin{question}[Complete normal curvature]
Let $\Gamma$ be a real analytic free embedding. Is local
intrinsic-to-lift equivalence characterized by its complete second fundamental
form? More precisely, is the relevant condition that, for every Hessian
$q_A(H)$ allowed by the intrinsic $\Gamma$-pair tests, there is a
self-adjoint completely bounded normal multiplier $L_A$ such that
\[
 q_A(H)+L_A\!\left(D^2\Gamma(A)[H,H]\right)\succeq0
\]
completely, with an order margin sufficient to control mixed normal
directions? The quadratic convexification lemma verifies this mechanism for
$\Gamma_\square$, while the full holomorphic graph is governed instead by the
two triangular Hessian tests. Proposition~\ref{prop:partial-nonlift} shows
that positivity in the normal curvature alone cannot suffice: the criterion
must also force the mixed Hessian to vanish on the kernel of every uncorrected
tangential block.
\end{question}

\begin{question}[Minimal-realization envelope]
Fix a Cartan germ at the universal point, rather than only its values on
$K$, and choose a minimal butterfly realization. Is the component of
$\{P\succ0\}$ containing the germ invariant, up to realization congruence,
and is it the maximal single-valued matrix-convex continuation domain? This is
the formulation for which the canonical rational domains in
\cite{HMV,JKMMPPartial} provide a genuine model.
\end{question}

\begin{question}[Matrix Jordan boundary]
Determine the matrix extreme and absolute extreme points of the
$\Gamma_J$-hull, together with optimal level-dependent matrix
Carath\'eodory bounds. The scalar Carath\'eodory number is exactly four by
\cref{prop:jordan-caratheodory}; at matrix levels the operator-valued
covariance constraint \eqref{eq:matrix-jordan-moment} and the nonreducing pair
in \cref{ex:3by3} show that the boundary problem is genuinely different.
\end{question}

The cardinally bounded universal sum places the local compatibility
conditions at a single point. The butterfly theorem then controls the analytic
continuation, while the chosen $\Gamma$-graph determines the corresponding
hull geometry.

\section*{Statement on AI-assisted preparation}

The methodology of this paper is described in
\cref{sec:ai-methodology}. We supplied the mathematical koan, the central
definitions and examples, and the direction of the argument. OpenAI's
ChatGPT was used to expand that koan into candidate proofs and expositions, to
organize the \LaTeX{} draft, and to assist with calculations, references, and
copy editing. We repeatedly corrected and restricted the generated claims,
reviewed the resulting manuscript, and accept responsibility for all
mathematical assertions, proofs, citations, and remaining errors.

\end{document}